\def\afs{\mathbb{A}}
\def\bad{\mathrm{bad}}
\def\badpair{\mathrm{BadPair}}
\def\coor{\mathrm{Coor}}
\def\diag{\mathrm{diag}}
\def\edge{\mathrm{E}}
\def\ex{\mathrm{ex}}
\def\frf{K}
\def\gal{\mathrm{Gal}}
\def\gauss{\mathrm{G}}
\def\gkz{\mathscr{H}}
\def\gl{\mathrm{GL}}
\def\hyp{\mathscr{H}\mathrm{yp}}
\def\init{\operatorname{in}}
\def\intr{R}
\def\kum{\mathscr{K}}
\def\lbox{{\text{\large$\square$}}}
\def\mat{\mathrm{Mat}}
\def\mono{\mathrm{Mono}}
\def\orb{\mathrm{orb}}
\def\perm{\gamma}
\def\proj{\operatorname{Proj}}
\def\prs{\mathbb{P}}
\def\ql{\overline{\mathbb{Q}}_{\ell}}
\def\qtup{\mathrm{Quad}}
\def\rank{\operatorname{rank}}
\def\rev{\mathrm{rev}}
\def\rsf{\kappa}
\def\spec{\operatorname{Spec}}
\def\spmap{\operatorname{sp}}
\def\topo{\mathrm{top}}
\def\trs{\mathbb{T}}
\def\trv{\mathbb{A}}
\declaretheorem[name=Theorem,numberwithin=subsection]{thm}
\declaretheorem[name=Corollary,sibling=thm]{cor}
\declaretheorem[name=Definition,sibling=thm]{dfn}
\declaretheorem[name=Example,sibling=thm,style=remark]{exm}
\declaretheorem[name=Lemma,sibling=thm]{lem}
\declaretheorem[name=Proposition,sibling=thm]{prp}
\declaretheorem[name=Remark,sibling=thm,style=remark]{rem}
\numberwithin{equation}{subsection}
\title{The characteristic cycle of a non-confluent \texorpdfstring{$\ell$}{l}-adic GKZ hypergeometric sheaf}
\author{Peijiang Liu}
\begin{document}

\maketitle

\begin{abstract}
	An $\ell$-adic GKZ hypergeometric sheaf is defined analogously to a GKZ hypergeometric $\mathcal{D}$-module. We introduce an algorithm of computing the characteristic cycle of an $\ell$-adic GKZ hypergeometric sheaf of certain type. Our strategy is to apply a formula of the characteristic cycle of the direct image of an $\ell$-adic sheaf. We verify the requirements for the formula to hold by calculating the dimension of the direct image of a certain closed conical subset of cotangent bundle. We also define an $\ell$-adic GKZ-type sheaf whose specialization tensored with a constant sheaf is isomorphic to an $\ell$-adic non-confluent GKZ hypergeometric sheaf. On the other hand, the topological model of an $\ell$-adic GKZ-type sheaf is isomorphic to the image by the de Rham functor of a non-confluent GKZ hypergeometric $\mathcal{D}$-module whose characteristic cycle has been calculated. This gives an easier way to determine the characteristic cycle of an $\ell$-adic non-confluent GKZ hypergeometric sheaf of certain type.
\end{abstract}

\tableofcontents

\numberwithin{thm}{section}
\numberwithin{con}{section}
\numberwithin{cor}{section}
\numberwithin{dfn}{section}
\numberwithin{exm}{section}
\numberwithin{lem}{section}
\numberwithin{prp}{section}
\numberwithin{rem}{section}

\section{Introduction}
An $\ell$-adic GKZ hypergeometric sheaf is an \'etale sheaf over a finite field $\rsf$ of characteristic $p$ where $\ell\neq p$ is a prime number (cf. {\cite[Introduction]{fu2016l}}). $\ell$-adic GKZ hypergeometric sheaves can be viewed as a generalization of $\ell$-adic Kloosterman sheaves (cf. {\cite[Chapter~4]{katz1988gauss}}) and the $\ell$-adic hypergeometric sheaves (cf. {\cite[Chapter~8]{katz1990exponential}}). The function given by taking traces of geometric Frobenius endomorphism of an $\ell$-adic GKZ hypergeometric sheaf is equal to the corresponding hypergeomtric function over $\rsf$ which is also called a GKZ hypergeometric sum (cf. {\cite{gelfand2001hypergeometric}}). A GKZ hypergeometric sum is an analogue over $\rsf$ of the integral representation of solutions of a system of linear differential equations which is called a GKZ hypergeometric system.

In this article, we introduce a method to relate a non-confluent $\ell$-adic GKZ hypergeometric sheaf to a non-confluent GKZ hypergeometric $\mathcal{D}$-module, and then formulate an algorithm to compute the characteristic cycle of the former $\ell$-adic sheaf by the known result of the characteristic cycle of the latter $\mathcal{D}$-module under certain assumptions.

We denote by $\hyp_{\psi}(A,\chi)\in D_{c}^{b}(\afs^{n},\overline{\mathbb{Q}}_{\ell})$ the $\ell$-adic GKZ hypergeometric sheaf associated to a matrix $A$ of dimension $(d+1)\times n$ whose entries are integers, a multiplicative character $\chi:(\rsf^{*})^{d+1}\rightarrow\overline{\mathbb{Q}}_{\ell}^{*}$ and a nontrivial additive character $\psi:\rsf\rightarrow\overline{\mathbb{Q}}_{\ell}^{*}$. We say that $A$ is \textit{non-confluent} if there exists an invertible matrix $P$, such that every element of the first row of $PA$ is equal to $1$. Let $R$ be a discrete valuation ring whose residue field is $\rsf$, such that the field of fractions $\frf$ of $R$ is a finite extension of $\mathbb{Q}$. For a scheme $Y$ over $R$, we denote by $Y_{\Lambda}=Y\times_{\spec R}\spec\Lambda$ where $\Lambda$ denotes one of $\rsf$ and $\frf$. For a morphism $f:Y'\rightarrow Y$ of schemes over $R$, we denote by $f_{\Lambda}$ the morphism induced by the base change $\spec\Lambda\rightarrow\spec R$. For a sheaf $\mathscr{F}\in D_{c}^{b}(Y,\overline{\mathbb{Q}}_{\ell})$, we denote by $\mathscr{F}_{\Lambda}$ the inverse image along $Y_{\Lambda}\rightarrow Y$. Assume that $A$ is non-confluent, then there exist a scheme $T$ over $R$ and $\mathscr{F}_{\chi}(A)\in D_{c}^{b}(T\times\afs^{n}_{R},\overline{\mathbb{Q}}_{\ell})$ which is tamely ramified along the complement of its lisse locus $U_{A}\subset T\times\afs^{n}_{R}$, such that
\begin{equation*}
	\hyp_{\psi}(A,\chi)\cong R\pi_{\rsf !}\mathscr{F}_{\chi}(A)_{\rsf}\otimes G(\chi,\psi),
\end{equation*}
where $G(\chi,\psi)$ is a $\overline{\mathbb{Q}}_{\ell}$-vector space and $\pi:T\times\afs^{n}\rightarrow\afs^{n}$ is the projection. Next, we say that $A$ is \textit{$p$-nondegenerate} if
\begin{equation*}
	\rank A[\theta]=\rank_{\rsf}A[\theta]_{\rsf}
\end{equation*}
holds for any subset $\theta\subset\{1,\dots,n\}$, where $A[\theta]$ denotes the matrix consisting of the columns of $A$ with indices in $\theta$ and $A[\theta]_{\rsf}$ is the image of $A[\theta]$ by $R\rightarrow\rsf$. For a scheme $Y$ over $R$, We denote by
\begin{equation*}
	\spmap:CH_{n}(Y_{\frf}/\frf)\rightarrow CH_{n}(Y_{\rsf}/\rsf)
\end{equation*}
the specialization map. Then, the main result is given by the following theorem.

\begin{thm}\label{introduction-main-result}
	Let $A$ be a non-confluent matrix. Let $\chi:(\rsf^{*})^{d+1}\rightarrow\overline{\mathbb{Q}}_{\ell}^{*}$ be a multiplicative character, and let $\psi:\rsf\rightarrow\mathbb{Q}_{\ell}^{*}$ be a nontrivial additive character. We denote by $j:\afs^{n}\rightarrow\prs^{n}$ the open immersion.
	\begin{enumerate}
		\item\label{introduction-main-result-nondegenerate} If $A$ is $p$-nondegenerate, then there exists an embedding $\widetilde{j}_{A}:U_{A}\rightarrow\widetilde{T}_{A}\times\prs^{n}$, such that
		\begin{equation}\label{introduction-main-formula}
			CC\hyp_{\psi}(A,\chi)=\spmap CCR\pi_{\frf !}\mathscr{F}_{\chi}(A)_{\frf}\in Z_{n}(T^{*}\afs^{n}_{\rsf}/\rsf).
		\end{equation}
		
		\item\label{introduction-main-result-square} If $A$ is a square matrix (i.e. $d+1=n$), then there exist a non-confluent matrix $A'$ which is $p$-nondegenerate, and a multiplicative character $\chi':(\rsf^{*})^{d+1}\rightarrow\overline{\mathbb{Q}}_{\ell}^{*}$, such that
		\begin{equation*}
			\hyp_{\psi}(A,\chi)\cong\hyp(A',\chi').
		\end{equation*}
	\end{enumerate}
\end{thm}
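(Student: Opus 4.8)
The plan is to prove the two assertions by different means. For the first, concerning the $p$-nondegenerate case, I would begin from the defining isomorphism $\hyp_\psi(A,\chi)\cong R\pi_{\rsf!}\mathscr{F}_\chi(A)_\rsf\otimes G(\chi,\psi)$. Since $G(\chi,\psi)$ is a one-dimensional $\ql$-vector space (a Gauss-sum factor), tensoring with it does not affect the characteristic cycle, so it suffices to compute $CCR\pi_{\rsf!}\mathscr{F}_\chi(A)_\rsf$ and, over $\frf$, the analogue $CCR\pi_{\frf!}\mathscr{F}_\chi(A)_\frf$. The first step is to compactify in the $\prs^n$-direction: using the open immersion $j\colon\afs^n\to\prs^n$ and the embedding $\widetilde{j}_A\colon U_A\to\widetilde{T}_A\times\prs^n$, I would extend $\pi$ to a proper morphism $\overline{\pi}\colon\widetilde{T}_A\times\prs^n\to\prs^n$ defined over $R$, so that over $\afs^n$ the functor $R\pi_!$ agrees with the restriction of the proper pushforward $R\overline{\pi}_*$ applied to the tame extension of $\mathscr{F}_\chi(A)$ to $U_A$. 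Passing to a proper morphism is what makes the pushforward formula for characteristic cycles applicable.

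The second step is to invoke the proper-pushforward formula $CC(R\overline{\pi}_*\mathscr{G})=\overline{\pi}_*CC(\mathscr{G})$. By Saito's theorem this identity holds once the pushforward of the singular support is a cycle of the expected dimension, namely once the image in $T^*\prs^n$ of the preimage of $SS(\mathscr{F}_\chi(A))$ under $d\overline{\pi}$ has dimension at most $n$. Checking this bound is precisely the computation of the dimension of the direct image of a closed conical subset announced in the abstract: the tameness of $\mathscr{F}_\chi(A)$ along the boundary of $U_A$ pins down $SS(\mathscr{F}_\chi(A))$ in terms of the conormals of the torus orbits, and the bound is then read off from the ranks of the submatrices $A[\theta]$, where the non-confluence of $A$ is what prevents the appearance of excess dimension. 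I expect this dimension bound to be the main obstacle of the first part.

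The third step transfers the computation from $\frf$ to $\rsf$. The ranks $\rank A[\theta]$ govern the dimensions of the strata of $U_A$ and of their conormal varieties; the hypothesis of $p$-nondegeneracy, $\rank A[\theta]=\rank_\rsf A[\theta]_\rsf$ for all $\theta$, guarantees that this combinatorial structure is identical over $\frf$ and over $\rsf$. Consequently the dimension bound of the second step holds on both fibers, so the pushforward formula is valid on both, and the two characteristic cycles correspond under specialization. Since $\spmap$ commutes with the proper pushforward of cycles defined by $\overline{\pi}$, and tameness makes $CC\mathscr{F}_\chi(A)$ compatible with specialization, I would chain the identities
\begin{align*}
	\spmap CCR\pi_{\frf!}\mathscr{F}_\chi(A)_\frf&=\overline{\pi}_{\rsf*}\spmap CC\mathscr{F}_\chi(A)_\frf=\overline{\pi}_{\rsf*}CC\mathscr{F}_\chi(A)_\rsf\\
	&=CCR\pi_{\rsf!}\mathscr{F}_\chi(A)_\rsf=CC\hyp_\psi(A,\chi),
\end{align*}
which yields \cref{introduction-main-formula}.

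For the second assertion, with $A$ square, the idea is to place $(A,\chi)$ into a normal form without changing the hypergeometric sheaf. Left multiplication of $A$ by a unimodular integer matrix reparametrizes the $(d+1)$-dimensional torus: it transports $\chi$ to a twisted character $\chi'$, leaves the isomorphism class of the sheaf unchanged, and preserves the row span, hence non-confluence. Right multiplication by a unimodular matrix is a monomial change of coordinates on $\afs^n$ and likewise yields an isomorphic sheaf. Using these operations together with an elementary-divisor reduction, I would select a representative $A'$ all of whose relevant minors are prime to $p$, so that no rank drops upon reduction modulo $p$ and $A'$ is $p$-nondegenerate, and take $\chi'$ to be the character produced along the way. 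The main obstacle of this part is to arrange the operations so as to preserve simultaneously the isomorphism class of the sheaf and non-confluence, while forcing $\rank A'[\theta]=\rank_\rsf A'[\theta]_\rsf$ for every subset $\theta$, and not only for the full column set.
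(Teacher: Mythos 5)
Part (1) of your proposal follows essentially the paper's route: compactify $\pi$ to a proper morphism over a complete toric compactification of $T$, apply Saito's pushforward formula after verifying $\dim\overline{\pi}_{\circ}SS\leq n$, and descend to $\rsf$ via the specialization map, using that $\spmap$ commutes with proper pushforward and that tameness along a simple normal crossing boundary makes the conormal cycles specialize to themselves. Your identification of the dimension bound as the crux, and of $p$-nondegeneracy as what makes it hold on the special fiber, matches \Cref{dimention-local-dimension}, which gives $n\leq\dim\leq n+\rank A[\theta]-\rank_{\rsf}A[\theta]_{\rsf}$. The one substantive point you take as given is the existence of the compactification $\widetilde{T}_{A}\times\prs^{n}$ with simple normal crossing boundary: this is itself part of the assertion and occupies all of \Cref{singularity} (an explicit sequence of toric blow-ups, \Cref{resolution-existence}); without the SNC property one can write down neither $SS$ nor $CC$ of the zero-extension of the tame lisse sheaf, nor justify $\spmap[N_{B}(\Sigma)]=[N_{B}(\Sigma)]$.

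Part (2) has a genuine gap. Unimodular row and column operations preserve $\det A$ up to sign, hence preserve $\mathrm{val}_{p}(\det A)$; since a square matrix of rank $d+1$ is $p$-nondegenerate if and only if $\mathrm{val}_{p}(\det A)=0$ (take $\theta=\{1,\dots,n\}$), no elementary-divisor reduction over $\mathbb{Z}$ can turn a $p$-degenerate square matrix into a $p$-nondegenerate one. (Moreover, right multiplication by a general unimodular matrix is a monomial substitution in the $x_{j}$, which is not an automorphism of $\afs^{n}$ and does not carry $\hyp_{\psi}(A,\chi)$ to $\hyp_{\psi}(AQ,\chi)$: the defining sum $\sum_{j}x_{j}\prod_{i}t_{i}^{a_{ij}}$ is not preserved under such a substitution, and the paper accordingly never uses column operations.) The missing idea is \Cref{square-sheaf-isomorphism}: left multiplication by $F=\diag\{p,1,\dots,1\}$, which is \emph{not} unimodular, corresponds to a partial Frobenius on $\trs^{d+1}$, a universal homeomorphism in characteristic $p$, and therefore still induces $\hyp_{\psi}(A,\chi)\cong\hyp_{\psi}(FA,\chi)$. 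One then chooses a unimodular $P_{1}$ making the first row of $P_{1}A$ divisible by $p$ (possible exactly when $\det A\equiv 0\bmod p$), replaces $A$ by $F^{-1}P_{1}A\in\mat(d+1,\mathbb{Z})$, which lowers $\mathrm{val}_{p}(\det A)$ by one, and iterates (\Cref{square-p-nondegenerate}); non-confluence is preserved throughout and the character is twisted only by the unimodular factors.
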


For a more precise form of \Cref{introduction-main-result}.\ref{introduction-main-result-nondegenerate}, see \Cref{square-characteristic-cycle}, and for a more precise form of \Cref{introduction-main-result}.\ref{introduction-main-result-square}, see \Cref{square-p-nondegerate-non-confluent}. We also prove the following theorem so that \cref{introduction-main-formula} holds when $A$ is a non-confluent square matrix. Next, we explain the idea of the proof of \Cref{introduction-main-formula}. Our ingredient is the following theorem.

\begin{thm}[{\cite[Theorem~2.2.5]{saito2021characteristic}}]\label{introduction-direct-image-formula}
	Let $f:Y'\rightarrow Y$ be a morphism of smooth schemes over a perfect field. Let $\mathscr{F}\in D_{c}^{b}(Y',\overline{\mathbb{Q}}_{\ell})$, then we have
	\begin{equation}\label{introduction-direct-image-formula-cycle}
		CCRf_{*}\mathscr{F}=f_{!}CC\mathscr{F}\in Z_{\dim Y}(T^{*}Y)
	\end{equation}
	if the following conditions are satisfied.
	\begin{enumerate}
		\item $Y$ is projective.
		\item $f$ is quasi-projective as well as proper.
		\item $\dim f_{\circ}SS\mathscr{F}\leq\dim Y$.
	\end{enumerate}
\end{thm}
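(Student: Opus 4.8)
The plan is to deduce the identity from the characterization of the characteristic cycle by the Milnor formula, in the framework of Beilinson and Saito. Recall that on a smooth scheme the characteristic cycle $CC\mathscr{G}$ is the unique cycle supported on $SS\mathscr{G}$ such that, for every function $h$ with an isolated characteristic point $u$ relative to $SS\mathscr{G}$, the total dimension of the stalk at $u$ of the vanishing cycle complex $\phi(\mathscr{G},h)$ equals, up to the standard sign, the intersection number $(CC\mathscr{G},dh)_{u}$ of $CC\mathscr{G}$ with the section $dh$ at the point above $u$. Since $CC$ is determined by this local property, it suffices to show that the cycle $f_{!}CC\mathscr{F}$ satisfies the Milnor formula for the complex $Rf_{*}\mathscr{F}$ on $Y$.

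First I would check that $f_{!}CC\mathscr{F}$ is a well-defined cycle of the asserted dimension. The operation $f_{!}$ is the proper pushforward along the cotangent correspondence $T^{*}Y\xleftarrow{\mathrm{pr}}Y'\times_{Y}T^{*}Y\xrightarrow{df}T^{*}Y'$, and it sends $Z_{\dim Y'}(T^{*}Y')$ into $Z_{\dim Y}(T^{*}Y)$ precisely when the image of $SS\mathscr{F}$ has dimension at most $\dim Y$; this is exactly hypothesis (3). Properness of $f$ guarantees that $\mathrm{pr}$ restricted to the relevant conical set is proper, so the pushforward of cycles is defined, and one also obtains the inclusion $SS Rf_{*}\mathscr{F}\subset f_{\circ}SS\mathscr{F}$, which controls the support of the left-hand side.

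Next I would compare the two sides against a single test function $h$ on $Y$. On the source, proper base change together with the compatibility of vanishing cycles with proper direct image gives an isomorphism $\phi_{u}(Rf_{*}\mathscr{F},h)\cong R\Gamma(f^{-1}(u),\phi(\mathscr{F},h\circ f))$, so the total dimension of the vanishing cycles of $Rf_{*}\mathscr{F}$ at $u$ equals the sum of the total dimensions of the vanishing cycles of $\mathscr{F}$ at the characteristic points of $h\circ f$ lying over $u$. On the cotangent side, the projection formula for the correspondence yields $(f_{!}CC\mathscr{F},dh)_{u}=(CC\mathscr{F},d(h\circ f))$ summed over those same points. Applying the Milnor formula for $\mathscr{F}$ on $Y'$ identifies each summand on the right with the corresponding vanishing-cycle contribution, and combining the three displays verifies the Milnor formula for $f_{!}CC\mathscr{F}$, hence the equality \cref{introduction-direct-image-formula-cycle}. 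Because $Y$ is projective and $f$ is quasi-projective, I can produce such functions $h$ globally, for instance from a Lefschetz pencil, so that both $h$ relative to $SS Rf_{*}\mathscr{F}$ and $h\circ f$ relative to $SS\mathscr{F}$ have only isolated characteristic points.

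The main obstacle is arranging the test function correctly: I must guarantee that a pencil chosen to isolate the characteristic points of $Rf_{*}\mathscr{F}$ on $Y$ also isolates those of $\mathscr{F}$ on $Y'$ after composition with $f$, and that no local contribution escapes to infinity or collapses in dimension. This is precisely where hypothesis (3), $\dim f_{\circ}SS\mathscr{F}\le\dim Y$, is indispensable: it forces the pushforward cycle to have the expected dimension and keeps the characteristic points over a generic $u$ finite in number, so that the local Milnor contributions add up as required. The projectivity and quasi-projectivity assumptions make the genericity arguments globally available, while properness is what permits the interchange of vanishing cycles with $Rf_{*}$.
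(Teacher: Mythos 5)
This statement is not proved in the paper at all: it is imported verbatim as \cite[Theorem~2.2.5]{saito2021characteristic}, so there is no internal proof to compare against, and what you have written is an attempt to reprove a deep external theorem. Your outline does capture the correct starting point (the Milnor-formula characterization of $CC$ and the compatibility of vanishing cycles with proper direct image), but it has a genuine gap at the central step. You pass from an isolated characteristic point $u$ of $h$ relative to $SSRf_{*}\mathscr{F}$ to ``the characteristic points of $h\circ f$ lying over $u$'' and sum local Milnor numbers over them. This presupposes that $h\circ f$ has only isolated characteristic points relative to $SS\mathscr{F}$ above $u$, which in general cannot be arranged: $f$ is only assumed proper, its fibers may be positive-dimensional, and the characteristic locus of $h\circ f$ inside $Y'$ can be positive-dimensional even when $u$ is isolated downstairs. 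Hypothesis (3), $\dim f_{\circ}SS\mathscr{F}\leq\dim Y$, bounds the dimension of the image of $SS\mathscr{F}$ in $T^{*}Y$ and hence makes $f_{!}CC\mathscr{F}$ a well-defined $n$-cycle, but it says nothing about the fibers of $f$ over $u$ and does not keep the upstairs characteristic points finite in number. You acknowledge this obstacle in your last paragraph but then dismiss it by appealing to (3), which does not deliver what you need.

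This is precisely the point where the actual proof departs from a pointwise Milnor comparison: Saito handles non-isolated characteristic points upstairs by replacing the local Milnor formula with a global index formula (an Euler-characteristic computation over the whole fiber, built on the earlier index theorem $\chi(Y',\mathscr{F})=(CC\mathscr{F},T^{*}_{Y'}Y')$ and a Radon-transform/Lefschetz-pencil reduction), and this is also where the projectivity of $Y$ and the properness and quasi-projectivity of $f$ enter in an essential, not merely technical, way. If you want to use this theorem, cite it as the paper does; if you want to prove it, the argument you sketched must be replaced at this step by the global index-formula machinery.
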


\begin{rem}
	A stronger version of \Cref{introduction-direct-image-formula} is recently proved in {\cite{abe2022ramification}} where \cref{introduction-direct-image-formula-cycle} is claimed to hold even if $Y$ is not projective. Moreover, even if we remove the condition $\dim f_{\circ}SS\mathscr{F}\leq\dim Y$, the formula \cref{introduction-direct-image-formula-cycle} remain to hold in $CH_{\dim Y}(f_{\circ}SS\mathscr{F})[1/p]$.
\end{rem}

In order to apply \Cref{introduction-direct-image-formula}, we construct a commutative diagram
\begin{equation*}
\begin{tikzpicture}
	\draw(0,0)node{$\afs^{n}$};
	\draw(3,0)node{$\prs^{n}$};
	\draw(-3,1.5)node{$T\times\afs^{n}$};
	\draw(0,1.5)node{$\overline{T}\times\afs^{n}$};
	\draw(3,1.5)node{$\overline{T}\times\prs^{n}$};
	\draw(-6,1.5)node{$U_{A}$};
	\draw(1.5,3.5)node{$\widetilde{T}_{A}\times\afs^{n}$};
	\draw(4.5,3.5)node{$\widetilde{T}_{A}\times\prs^{n}$};
	\draw[->](2.25,3.5)--(3.75,3.5);
	\draw[rounded corners,->](0.75,3.25)--(0,2.75)--(0,1.75);
	\draw[rounded corners,->](3.75,3.25)--(3,2.75)--(3,1.75);
	\draw[rounded corners,->](4.5,3.25)--(4.5,0.75)--(3.25,0);\draw(4.75,2)node{\text{\scriptsize$\widetilde{\pi}_{A}$}};
	\draw[->](-5.5,1.5)--(-3.75,1.5);\draw(-4.5,1.75)node{\text{\scriptsize$j_{A}$}};
	\draw[rounded corners](-5.75,1.75)--(-5,2.25)--(-0.1,2.25);\draw(0.1,2.25)--(2.9,2.25);\draw[rounded corners,->](3.1,2.25)--(4.25,2.25)--(4.25,3.25);\draw(-1.5,2.5)node{\text{\scriptsize$\widetilde{j}_{A}$}};
	\draw[->](-2.25,1.5)--(-0.75,1.5);
	\draw[rounded corners,->](-3,1.25)--(-3,0)--(-0.5,0);\draw(-1.5,0.25)node{\text{\scriptsize$\pi$}};
	\draw[->](0.75,1.5)--(2.25,1.5);
	\draw[->](0,1.25)--(0,0.25);
	\draw[->](3,1.25)--(3,0.25);
	\draw[->](0.5,0)--(2.5,0);\draw(1.5,0.25)node{\text{\scriptsize$j$}};
	\draw(1.35,0.6)rectangle(1.65,0.9);
	\draw(1.343,2.6)--(1.657,2.6);\draw(1.51,2.896)--(1.81,2.896);
	\draw(1.35,2.6)--(1.35,2.7);\draw(1.65,2.6)--(1.65,2.7);
	\draw(1.35,2.7)arc(180:100:0.2);\draw(1.65,2.7)arc(180:100:0.2);
\end{tikzpicture}
\end{equation*}
where $U_{A}$ is the lisse locus of $\mathscr{F}_{\chi}(A)$, such that the following conditions are satisfied.
\begin{enumerate}
	\item $\overline{T}$ is projective and $\widetilde{T}_{A}\times\prs^{n}\rightarrow\overline{T}\times\prs^{n}$ is smooth as well as projective.
	\item $\widetilde{j}_{A}$ is an open immersion and $(\widetilde{T}_{A}\times\prs^{n})\backslash U_{A}$ is a divisor with simple normal crossings.
	\item $\dim(\widetilde{\pi}_{A})_{\rsf\circ}SS(\widetilde{j}_{A})_{\rsf !}(j_{A})_{\rsf}^{*}\mathscr{F}_{\chi}(A)_{\rsf}\leq n$.
\end{enumerate}
If such diagram exists, then $(\widetilde{\pi}_{A})_{\rsf}:(\widetilde{T}_{A})_{\rsf}\times\prs^{n}_{\rsf}\rightarrow\prs^{n}_{\rsf}$ and $(\widetilde{j}_{A})_{\rsf !}(j_{A})_{\rsf}^{*}\mathscr{F}_{\chi}(A)_{\rsf}$ meet the requirements of \Cref{introduction-direct-image-formula} and hence
\begin{equation*}
	CCR(\widehat{\pi}_{A})_{\rsf !}(\widetilde{j}_{A})_{\rsf !}(j_{A})_{\rsf}^{*}\mathscr{F}_{\chi}(A)=(\widehat{\pi}_{A})_{\rsf !}CC(\widetilde{j}_{A})_{\rsf !}(j_{A})_{\rsf}^{*}\mathscr{F}_{\chi}(A)\in Z_{n}(T^{*}\prs^{n}_{\rsf}/\rsf),
\end{equation*}
where $CC(\widetilde{j}_{A})_{\rsf !}(j_{A})_{\rsf}^{*}\mathscr{F}_{\chi}(A)$ is given by the cycle class of the conormal bundle alone the divisor $(\widetilde{T}_{A})_{\rsf}\times\prs^{n}_{\rsf}$ with simple normal crossings. See \Cref{resolution-existence} for the construction of the resolution of singularities $\widetilde{T}_{A}\times\afs^{n}\rightarrow\overline{T}\times\afs^{n}$. In this situation, \Cref{dimension-global-dimension} implies that if $A$ is $p$-nondegenerate, then we have $\dim(\widetilde{\pi}_{A})_{\rsf\circ}SS(\widetilde{j}_{A})_{\rsf !}(j_{A})_{\rsf}^{*}\mathscr{F}_{\chi}(A)_{\rsf}\leq n$. On the other hand, by {\cite[Proposition~2.2.7]{saito2021characteristic}}, we also have $\dim(\widetilde{\pi}_{A})_{\frf\circ}SS(\widetilde{j}_{A})_{\frf !}(j_{A})_{\frf}^{*}\mathscr{F}_{\chi}(A)_{\frf}\leq n$. Since the specialization maps commute with the direct image map $(\widetilde{\pi}_{A})_{\Lambda !}$ and the flat pull-back map $j_{\Lambda}^{*}$, the equality \cref{introduction-main-formula} is proved.

However, the computation of the direct images of cycles can be very complicated in general. In order to avoid falling into chaos, we relate $\hyp_{\psi}(A,\chi)$ to a GKZ hypergeometric $\mathcal{D}$-module via Riemann-Hilbert correspondence and compare their characteristic cycles. The characteristic cycle of a GKZ hypergeometric $\mathcal{D}$-module is well studied so that there is a formula to compute it (cf. \cite{berkesch2020characteristic}). Then, the characteristic cycle of $\hyp_{\psi}(A,\chi)$ is given by the formula computing that of the related GKZ hypergeometric $\mathcal{D}$-module and the comparison between them.

We denote by $\mathcal{H}_{z}(A)$ the GKZ hypergeometric $\mathcal{D}$-module associated to $A$ and $z\in(\mathbb{C}^{*})^{d+1}$. If $A$ is non-confluent, then $\mathcal{H}_{z}(A)$ is regular holonomic and we prove that
\begin{equation*}
	DR(\mathcal{H}_{z}(A))\cong (R\pi_{\frf !}\mathscr{F}_{\chi}(A)_{\frf})_{\topo}\in D_{c}^{b}(\afs^{n}_{\topo},\mathbb{C}).
\end{equation*}
Here $\afs^{n}_{\topo}$ is the topological space of $\afs^{n}_{\mathbb{C}}$ and $(R\pi_{\frf !}\mathscr{F}_{\chi}(A)_{\frf})_{\topo}$ is the topological sheaf given by the inverse image of $R\pi_{\frf !}\mathscr{F}_{\chi}(A)_{\frf}$ by the base change $\spec\mathbb{C}\rightarrow\spec\Lambda$. Recall that $j_{A}^{*}\mathscr{F}_{\chi}$ is a lisse sheaf on $U_{A}$ and is tamely ramified along $\widetilde{T}_{A}\times\afs^{n}\setminus U_{A}$, which implies that
\begin{equation*}
	(CC(\widetilde{j}_{A})_{\frf !}(j_{A})_{\frf}^{*}\mathscr{F}_{\chi}(A)_{\frf})\times_{\spec\frf}\spec\mathbb{C}=CC((\widetilde{j}_{A})_{\frf !}(j_{A})_{\frf}^{*}\mathscr{F}_{\chi}(A)_{\frf})_{\topo}.
\end{equation*}
By {\cite[Proposition~9.4.2,~Proposition~9.4.3]{kashiwara2003d}}, we prove
\begin{equation}\label{introduction-topological-formula}
	CC\mathcal{H}_{z}(A)=(CCR\pi_{\frf !}\mathscr{F}_{\chi}(A)_{\frf})\times_{\spec\frf}\spec\mathbb{C}.
\end{equation}
Then, combining \cref{introduction-main-formula} with \cref{introduction-topological-formula}, we get a formula computing $CC\hyp_{\psi}(A,\chi)$. See \Cref{topology-comparison-hard} for the precise form of the formula.

We move on to introduce the organization of this article. In \Cref{sheaf}, we review the definition of $\ell$-adic GKZ hypergeometric sheaves and define $\ell$-adic GKZ-type sheaves as a generalization to sheaves over a field of characteristic zero. In \Cref{singularity}, we construct the resolution of singularities. We begin with interpreting the problem in the language of toric varieties in \Cref{divisor}, and then introduce the tool we use to construct the resolution in \Cref{blowup}. \Cref{resolution} is the construction. Next, we formulate the main result of this article in \Cref{characteristic}. We calculate the dimension of certain cycles in \Cref{dimension}, and prove them to be the direct images of the singular support in \Cref{image}. \Cref{square} is the proof of the main result. In \Cref{comparison}, we formulate comparison theorems among different types of characteristic cycles. We use specialization map to build a connection between the characteristic cycle of a non-confluent $\ell$-adic GKZ hypergeometric sheaf and the corresponding $\ell$-adic GKZ-type sheaf over a field of characteristic zero in \Cref{specialization}, and use Riemann-Hilbert correspondence to construct a relationship between an $\ell$-adic GKZ-type sheaf and the corresponding regular GKZ hypergeometric $\mathcal{D}$-module in \Cref{topology}. Finally, we give examples in \Cref{example} to explain the limitation of our method.

I appreciate the financial support from my parents, and I wish to thank my teacher, Tomoyuki Abe, for so much helpful advice. I also wish to thank the referee and the editor for offering helpful suggestions.

\section{$\ell$-adic Kummer-type and GKZ-type sheaves}\label{sheaf}
We review the definition of $\ell$-adic GKZ hypergeometric sheaves and a proposition of non-confluent ones, inspired by which we define the $\ell$-adic GKZ-type sheaves as a generalization of the non-confluent $\ell$-adic GKZ hypergeometric sheaves to sheaves over a finite extension of $\mathbb{Q}$.

Let $\rsf=\mathbb{F}_{q}$ be a finite field of characteristic $p$. Let $\zeta_{q-1}$ be a primitive $(q-1)$st root of unity, and let $\frf=\mathbb{Q}(\zeta_{q-1})$. Then, the composition of the $p$-adic valuation of $\mathbb{Q}$ and the norm function associated to the field extension $\frf/\mathbb{Q}$ defines a discrete valuation of $\frf$. Let $\intr$ be the union of $\{0\}$ and the set of all elements in $\frf$ with non-negative $p$-adic valuations. Then, we note that $\intr$ is a discrete valuation ring whose field of fractions of $\intr$ is $\frf$ and whose residue field is $\rsf$. We denote by $\Lambda$ one of $\intr,\rsf$ and $\frf$.

For $0\leq i\leq d$, let $\trs^{1}_{i}=\spec\Lambda[t_{i}^{\pm 1}]$. For $0\leq k\leq d$, we write $\trs^{d+1-k}=\trs^{1}_{k}\times\dots\times\trs^{1}_{d}$. Let $\prescript{k}{}\tau_{i}:\trs^{d+1-k}\rightarrow\trs^{1}_{i}$ be the projection. Let $\afs^{n}=\Lambda[x_{1},\dots,x_{n}]$. Let $\prescript{k}{}\tau:\afs^{n}_{\trs^{d+1-k}}\rightarrow\trs^{d+1-k}$ and $\prescript{k}{}\pi:\afs^{n}_{\trs^{d+1-k}}\rightarrow\afs^{n}$ be the projections. Let $\afs^{1}_{\dagger}=\spec\Lambda[y]$. For a matrix $M=[m_{ij}]_{k\leq i\leq d;1\leq j\leq n}$ whose entries are integers, we define $\prescript{k}{}g_{M}:\afs^{n}_{\trs^{d+1-k}}\rightarrow\afs^{1}_{\dagger}$ to be the morphism induced by
\begin{equation*}
	\Lambda[y]\rightarrow\Lambda[t_{k},\dots,t_{d}][x_{1},\dots,x_{n}]:y\mapsto\sum_{j=1}^{n}x_{j}\prod_{i=1}^{d+1-k}t_{i}^{m_{ij}}.
\end{equation*}
In this article $k$ is only set to be $0$ or $1$, and we write $\widehat{\&}_{\%}=\prescript{0}{}\&_{\%}$ and $\&_{\%}=\prescript{1}{}\&_{\%}$. Let $\ell\neq p$ be a prime number. For a nontrivial additive character $\psi:\rsf\rightarrow\overline{\mathbb{Q}}_{\ell}^{*}$, we denote by $\mathscr{L}_{\psi}\in D_{c}^{b}(\afs^{1}_{\dagger},\overline{\mathbb{Q}}_{\ell})$ the Artin-Schreier sheaf associated to $\psi$. Let $\mu_{q-1}$ be the group of $(q-1)$st roots of unity. For a multiplicative character $\rho:\mu_{q-1}\rightarrow\overline{\mathbb{Q}}_{\ell}^{*}$, we denote by $\kum_{\rho}\in D_{c}^{b}(\mathbb{G}_{m},\overline{\mathbb{Q}}_{\ell})$ the Kummer sheaf associated to $\rho$.

\begin{dfn}[\cite{fu2016l}]
	Set $\Lambda=\rsf$. Let $A=[a_{ij}]_{0\leq i\leq d;1\leq j\leq n}$ be a matrix of rank $d+1$ whose entries are integers. Let $\chi=(\chi_{0},\dots,\chi_{d}):\mu_{q-1}^{d+1}\rightarrow\overline{\mathbb{Q}}_{\ell}^{*}$ be a multiplicative character, and let $\psi:\rsf\rightarrow\overline{\mathbb{Q}}_{\ell}^{*}$ be a nontrivial additive character. The \textnormal{$\ell$-adic GKZ hypergeometric sheaf} associated to $A,\chi$ and $\psi$ is defined to be
	\begin{equation*}
		\hyp_{\psi}(A,\chi)=R\widehat{\pi}_{!}(\widehat{\tau}^{*}(\widehat{\tau}_{0}^{*}\kum_{\chi_{0}}\otimes\dots\otimes\widehat{\tau}_{d}^{*}\kum_{\chi_{d}})\otimes\widehat{g}_{A}^{*}\mathscr{L}_{\psi})[d+n+1]\in D_{c}^{b}(\afs^{n},\overline{\mathbb{Q}}_{\ell}).
	\end{equation*}
\end{dfn}

Let $\trs^{1}_{\dagger}=\spec\Lambda[y^{\pm 1}]$, and let $j_{\dagger}:\trs^{1}_{\dagger}\rightarrow\afs^{1}_{\dagger}$ be the open immersion. Then, for a matrix $B=[b_{ij}]_{1\leq i\leq d;1\leq j\leq n}$ whose entries are integers, we define an open subset $U_{B}\subset\afs^{n}_{\trs^{d}}$ by the following Cartesian diagram of schemes over $\Lambda$.
\begin{equation}\label{sheaf-kummer-sheaf-lisse-locus}
\begin{tikzcd}
	U_{B}\arrow[d,"g_{B}'"']\arrow[r,"j_{B}"]\arrow[dr,phantom,"\square"]&\afs^{n}_{\trs^{d}}\arrow[d,"g_{B}"]\\
	\trs^{1}_{\dagger}\arrow[r,"j_{\dagger}"]&\afs^{1}_{\dagger}
\end{tikzcd}.
\end{equation}

\begin{dfn}\label{sheaf-definition}
	Let $B=[b_{ij}]_{1\leq i\leq d;1\leq j\leq n}$ be a matrix whose entries are integers, and let $\chi=(\chi_{0},\dots,\chi_{d}):\mu_{q-1}^{d+1}\rightarrow\overline{\mathbb{Q}}_{\ell}^{*}$ be a multiplicative character.
	\begin{enumerate}
		\item We define the \textnormal{$\ell$-adic Kummer-type sheaf} associated to $B,\chi$ to be
		\begin{equation*}
			\kum_{\chi}(B)=j_{B}^{*}\tau^{*}(\tau_{1}^{*}\kum_{\chi_{1}}\otimes\dots\otimes\tau_{d}^{*}\kum_{\chi_{d}})\otimes g_{B}'^{*}\kum_{\chi_{0}^{-1}}\in D_{c}^{b}(U_{B},\overline{\mathbb{Q}}_{\ell}).
		\end{equation*}
		
		\item We define the \textnormal{$\ell$-adic GKZ-type sheaf} associated to $B,\chi$ to be
		\begin{equation*}
			\gkz_{\chi}(B)=R\pi_{!}j_{B!}\kum_{\chi}(B)\in D_{c}^{b}(\afs^{n},\overline{\mathbb{Q}}_{\ell}).
		\end{equation*}
	\end{enumerate}
\end{dfn}

\begin{dfn}
	Let $B=[b_{ij}]_{1\leq i\leq d;1\leq j\leq n}$ be a matrix whose entries are integers.
	\begin{enumerate}
		\item We define $\widehat{B}=[b_{ij}]_{0\leq i\leq d;1\leq j\leq n}$ where $b_{01}=\dots=b_{0n}=1$.
		\item We say that $B$ is \textnormal{sub-non-confluent} if $\widehat{B}$ is of rank $d+1$.
	\end{enumerate}
\end{dfn}

\begin{lem}\label{sheaf-isomorphism}
	Set $\Lambda=\rsf$. Let $B=[b_{ij}]_{1\leq i\leq d;1\leq j\leq n}$ be a sub-non-confluent matrix. Let $\chi=(\chi_{0},\dots,\chi_{d}):\mu_{q-1}^{d+1}\rightarrow\overline{\mathbb{Q}}_{\ell}^{*}$ be a multiplicative character, and let $\psi:\rsf\rightarrow\overline{\mathbb{Q}}_{\ell}^{*}$ be a nontrivial additive character. If $\chi_{0}$ is nontrivial, then we have
	\begin{equation*}
		\hyp_{\psi}(\widehat{B},\chi)\cong\gkz_{\chi}(B)\otimes\gauss(\chi_{0},\psi)[d+n]\in D_{c}^{b}(\afs^{n},\overline{\mathbb{Q}}_{\ell}).
	\end{equation*}
	Here $\gauss(\chi_{0},\psi)$ is the one-dimensional $\overline{\mathbb{Q}}_{\ell}$-vector space with a continuous action of $\gal(\overline{\rsf}/\rsf)$ such that the geometric Frobenius acts by the multiplication by the Gauss sum associated to $\chi_{0}$ and $\psi$.
\end{lem}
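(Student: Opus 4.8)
The plan is to integrate out the single torus coordinate $t_{0}$ that distinguishes $\widehat{B}$ from $B$, and to identify the resulting fibre integral with a Kummer sheaf twisted by a Gauss sum. Since every entry of the zeroth row of $\widehat{B}$ equals $1$, the map factors as $\widehat{g}_{\widehat{B}}=m\circ(q_{0},g_{B}\circ q)$, where $q:\afs^{n}_{\trs^{d+1}}\to\afs^{n}_{\trs^{d}}$ is the projection forgetting $t_{0}$, $q_{0}:\afs^{n}_{\trs^{d+1}}\to\trs^{1}_{0}$ is the projection onto $t_{0}$, and $m:\trs^{1}_{0}\times\afs^{1}_{\dagger}\to\afs^{1}_{\dagger}$ is multiplication $(t_{0},c)\mapsto t_{0}c$. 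Writing $\mathscr{G}=\tau^{*}(\tau_{1}^{*}\kum_{\chi_{1}}\otimes\dots\otimes\tau_{d}^{*}\kum_{\chi_{d}})$ on $\afs^{n}_{\trs^{d}}$, the sheaf inside $R\widehat{\pi}_{!}$ becomes $q_{0}^{*}\kum_{\chi_{0}}\otimes q^{*}\mathscr{G}\otimes\widehat{g}_{\widehat{B}}^{*}\mathscr{L}_{\psi}$; here I use that sub-non-confluence guarantees $\widehat{B}$ has rank $d+1$, so that $\hyp_{\psi}(\widehat{B},\chi)$ is defined. Using $\widehat{\pi}=\pi\circ q$ and the projection formula to extract $\mathscr{G}$, the whole statement reduces to computing $Rq_{!}(q_{0}^{*}\kum_{\chi_{0}}\otimes\widehat{g}_{\widehat{B}}^{*}\mathscr{L}_{\psi})$.

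First I would carry out this computation universally on the base $\afs^{1}_{\dagger}$. The square with vertices $\afs^{n}_{\trs^{d+1}}$, $\trs^{1}_{0}\times\afs^{1}_{\dagger}$, $\afs^{n}_{\trs^{d}}$, $\afs^{1}_{\dagger}$, with top map $(t_{0},w)\mapsto(t_{0},g_{B}(w))$, verticals $q$ and the second projection $\mathrm{pr}_{2}$, and bottom map $g_{B}$, is Cartesian. By base change for $R(-)_{!}$ the fibre integral is $g_{B}^{*}$ of the universal object $R\mathrm{pr}_{2!}(\mathrm{pr}_{1}^{*}\kum_{\chi_{0}}\otimes m^{*}\mathscr{L}_{\psi})$ on the $c$-line. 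To evaluate the latter I restrict to $\trs^{1}_{\dagger}=\mathbb{G}_{m}$ and apply the automorphism $(t_{0},c)\mapsto(t_{0}c,c)$ of $\mathbb{G}_{m}\times\mathbb{G}_{m}$: by multiplicativity of the Kummer sheaf, $\mathrm{pr}_{1}^{*}\kum_{\chi_{0}}$ becomes $\kum_{\chi_{0}}(u)\otimes\kum_{\chi_{0}^{-1}}(c)$ and $m^{*}\mathscr{L}_{\psi}$ becomes $\mathscr{L}_{\psi}(u)$, where $u=t_{0}c$ is the new fibre coordinate. The projection formula then pulls $\kum_{\chi_{0}^{-1}}(c)$ out and leaves the constant complex $R\Gamma_{c}(\mathbb{G}_{m},\kum_{\chi_{0}}\otimes\mathscr{L}_{\psi})$.

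The heart of the matter, and the main obstacle, is this Gauss-sum cohomology. With $\chi_{0}$ and $\psi$ both nontrivial the sheaf $\kum_{\chi_{0}}\otimes\mathscr{L}_{\psi}$ is lisse of rank one, tame at $0$ and of Swan conductor $1$ at $\infty$; the Euler--Poincar\'e formula gives $\chi_{c}=-1$, while $H^{0}_{c}=H^{2}_{c}=0$, so the cohomology is concentrated in degree $1$ and one-dimensional, and by the Grothendieck trace formula geometric Frobenius acts through the Gauss sum, identifying it with $\gauss(\chi_{0},\psi)[-1]$ up to the standard sign in the Gauss-sum normalization. The hypothesis $\chi_{0}\neq1$ is also what forces vanishing over the fibre $c=0$: there $m$ is constant, $\mathscr{L}_{\psi}$ restricts trivially, and $R\Gamma_{c}(\mathbb{G}_{m},\kum_{\chi_{0}})=0$. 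Hence the universal object has vanishing stalk at the origin and restricts to $\kum_{\chi_{0}^{-1}}\otimes\gauss(\chi_{0},\psi)[-1]$ on $\mathbb{G}_{m}$, so it equals $j_{\dagger!}\kum_{\chi_{0}^{-1}}\otimes\gauss(\chi_{0},\psi)[-1]$.

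Finally I would assemble the pieces. Pulling back along $g_{B}$ through the Cartesian square \cref{sheaf-kummer-sheaf-lisse-locus} defining $U_{B}$ turns $j_{\dagger!}\kum_{\chi_{0}^{-1}}$ into $j_{B!}(g_{B}'^{*}\kum_{\chi_{0}^{-1}})$, so $Rq_{!}(q_{0}^{*}\kum_{\chi_{0}}\otimes\widehat{g}_{\widehat{B}}^{*}\mathscr{L}_{\psi})\cong j_{B!}(g_{B}'^{*}\kum_{\chi_{0}^{-1}})\otimes\gauss(\chi_{0},\psi)[-1]$. Tensoring with $\mathscr{G}$ and using the projection formula for $j_{B!}$ gives $\mathscr{G}\otimes j_{B!}(g_{B}'^{*}\kum_{\chi_{0}^{-1}})=j_{B!}(j_{B}^{*}\mathscr{G}\otimes g_{B}'^{*}\kum_{\chi_{0}^{-1}})=j_{B!}\kum_{\chi}(B)$, exactly the sheaf appearing in the definition of $\gkz_{\chi}(B)$. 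Applying $R\pi_{!}$ then yields $R\widehat{\pi}_{!}(\cdots)=\gkz_{\chi}(B)\otimes\gauss(\chi_{0},\psi)[-1]$, and combining the shift $[d+n+1]$ from the definition of $\hyp_{\psi}$ with this $[-1]$ produces the claimed $[d+n]$. Apart from the rank-one cohomology input on $\mathbb{G}_{m}$, every step is formal (base change, the projection formula, and matching definitions), so the entire difficulty is concentrated in that single Gauss-sum computation.
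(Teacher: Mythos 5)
Your argument is correct, and its concluding steps coincide exactly with the paper's proof: proper base change through the Cartesian square \cref{sheaf-kummer-sheaf-lisse-locus} to turn $g_{B}^{*}j_{\dagger !}\kum_{\chi_{0}^{-1}}$ into $j_{B!}g_{B}'^{*}\kum_{\chi_{0}^{-1}}$, then the projection formula for $j_{B!}$ to recognize $j_{B!}\kum_{\chi}(B)$, then $R\pi_{!}$. The difference is upstream: the paper simply quotes the isomorphism
\begin{equation*}
	\hyp_{\psi}(\widehat{B},\chi)\cong R\pi_{!}\bigl(\tau^{*}(\tau_{1}^{*}\kum_{\chi_{1}}\otimes\dots\otimes\tau_{d}^{*}\kum_{\chi_{d}})\otimes g_{B}^{*}j_{\dagger !}\kum_{\chi_{0}^{-1}}\bigr)\otimes\gauss(\chi_{0},\psi)[d+n]
\end{equation*}
from the last formula on p.~84 of Fu's proof of Theorem~0.8, whereas you re-derive it: you factor $\widehat{g}_{\widehat{B}}$ as $t_{0}\cdot g_{B}$ using the row of $1$'s in $\widehat{B}$, reduce by base change to the universal object $R\mathrm{pr}_{2!}(\mathrm{pr}_{1}^{*}\kum_{\chi_{0}}\otimes m^{*}\mathscr{L}_{\psi})$ on the line $\afs^{1}_{\dagger}$, and identify that object with $j_{\dagger !}\kum_{\chi_{0}^{-1}}\otimes\gauss(\chi_{0},\psi)[-1]$ by Deligne's Gauss-sum-sheaf computation (the substitution $u=t_{0}c$ over $c\neq 0$, multiplicativity of the Kummer sheaf, vanishing of $H^{0}_{c}$, of $H^{2}_{c}$, and of the fibre at $c=0$ because $\chi_{0}$ is nontrivial, and the Euler--Poincar\'e formula for $\dim H^{1}_{c}=1$). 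This is essentially the content of the step the paper outsources to Fu, so what your route buys is self-containedness, at the cost of length. The only point you should pin down is the normalization you wave at: the trace of geometric Frobenius on $H^{1}_{c}(\mathbb{G}_{m,\overline{\rsf}},\kum_{\chi_{0}}\otimes\mathscr{L}_{\psi})$ equals $-\sum_{t\in\rsf^{*}}\chi_{0}(t)\psi(t)$, so the statement comes out on the nose precisely when $\gauss(\chi_{0},\psi)$ is taken to be that $H^{1}_{c}$ (the convention of Fu and Katz), and it is cleaner to say so than to leave it as ``up to the standard sign.''
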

\begin{proof}
	By the last formula on page~84 of \cite[Proof~of~Theorem~0.8]{fu2016l}, we have the following isomorphism in $D_{c}^{b}(\afs^{n},\overline{\mathbb{Q}}_{\ell})$.
	\begin{equation*}
		\hyp_{\psi}(\widehat{B},\chi)\cong R\pi_{!}(\tau^{*}(\tau_{1}^{*}\kum_{\chi_{1}}\otimes\dots\otimes\tau_{d}^{*}\kum_{\chi_{d}})\otimes g_{B}^{*}j_{\dagger !}\kum_{\chi_{0}^{-1}})\otimes\gauss(\chi_{0},\psi)[d+n].
	\end{equation*}
	By base change theorem for proper pushforward, we have the following isomorphism in $D_{c}^{b}(\afs^{n}_{\trs^{d}},\overline{\mathbb{Q}}_{\ell})$.
	\begin{equation*}
		g_{B}^{*}j_{\dagger !}\kum_{\chi_{0}^{-1}}\cong j_{B!}g_{B}'^{*}\kum_{\chi_{0}^{-1}}.
	\end{equation*}
	By the projection formula, we have the following isomorphism in $D_{c}^{b}(\afs^{n}_{\trs^{d}},\overline{\mathbb{Q}}_{\ell})$.
	\begin{equation*}
		j_{B!}\kum_{\chi}(B)\cong\tau^{*}(\tau_{1}^{*}\kum_{\chi_{1}}\otimes\dots\otimes\tau_{d}^{*}\kum_{\chi_{d}})\otimes j_{B!}g_{B}'^{*}\kum_{\chi_{\chi_{0}^{-1}}}.
	\end{equation*}
	Then, the assertion follows.
\end{proof}

\numberwithin{thm}{subsection}
\numberwithin{con}{subsection}
\numberwithin{cor}{subsection}
\numberwithin{dfn}{subsection}
\numberwithin{exm}{subsection}
\numberwithin{lem}{subsection}
\numberwithin{prp}{subsection}
\numberwithin{rem}{subsection}

\section{Resolutions of singularities}\label{singularity}
We interpret the problem in the language of toric varieties in \Cref{divisor}, and introduce certain types of blow-ups in \Cref{blowup} which we use to construct the resolution of singularities in \Cref{resolution}. As a reference for the toric geometry notions, we refer to \cite{fulton1993introduction}.
\subsection{Divisors corresponding to a matrix}\label{divisor}

We define a divisor corresponding to a matrix which is equal to the complement of the lisse locus of an $\ell$-adic Kummer-type sheaf. We also explain how we detect a resolution of singularities.

We consider $\mathbb{Z}^{d}$ as a lattice in $\mathbb{R}^{d}$. Denote $(\mathbb{Z}^{d})\text{*}$ be the dual lattice of $\mathbb{Z}^{d}$. For a $d$-dimensional nonsingular cone $\sigma$ in $\mathbb{Z}^{d}$, let $\edge(\sigma)$ be the set of edges of $\sigma$. For an edge $\epsilon\in\edge(\sigma)$, let $v_{\epsilon}=(v_{\epsilon i})_{1\leq i\leq d}\in\mathbb{Z}^{d}$ be its unique generator. Let $\{u_{\sigma,\epsilon}\mid\epsilon\in\edge(\sigma)\}$ be the set of elements of $\sigma^{\vee}\cap(\mathbb{Z}^{d})\text{*}$ satisfying
\begin{equation*}
	v_{\epsilon'}^{\top}\cdot u_{\sigma,\epsilon}=\begin{cases}
		1&\epsilon=\epsilon'\\
		0&\epsilon\neq\epsilon'
	\end{cases}.
\end{equation*}
Write $u_{\sigma,\epsilon}=(u_{\sigma,\epsilon i})_{1\leq i\leq d}$ and $V_{\sigma}=[v_{\epsilon i}]_{1\leq i\leq d}$. We note that $(V_{\sigma}^{-1})^{\top}=[u_{\sigma,\epsilon i}]_{1\leq i\leq d}$. Let $\Gamma(\sigma)$ be the finite set $\{t_{\sigma,\epsilon}\mid\epsilon\in\edge(\sigma)\}$. We identify the monoid $\sigma^{\vee}\cap(\mathbb{Z}^{d})\text{*}$ with the commutative semi-group freely generated by the set $\gamma(\sigma)$ via the isomorphism
\begin{equation*}
	\sum_{\epsilon\in\edge(\sigma)}w_{\epsilon}u_{\sigma,\epsilon}\mapsto\prod_{\epsilon\in\edge(\sigma)}t_{\sigma,\epsilon}^{w_{\epsilon}}.
\end{equation*}
Here $w_{\epsilon}\in\mathbb{Z}_{\geq 0}$ for $\epsilon\in\edge(\sigma)$. Then, we denote by $\trv(\sigma)=\spec\Lambda[t_{\sigma,\epsilon}\mid\epsilon\in\edge(\sigma)]$ and note that $\trv(\sigma)$ is the \textit{affine toric variety} corresponding to $\sigma$ over $\Lambda$. The ring homomorphism
\begin{equation*}
	\Lambda[\Gamma(\sigma)]\rightarrow\Lambda[t_{1}^{\pm 1},\dots,t_{d}^{\pm 1}]:t_{\sigma,\epsilon}\mapsto\prod_{i=1}^{d}t_{i}^{u_{\sigma,\epsilon i}}
\end{equation*}
induces an open immersion $\trs^{d}\rightarrow\trv(\sigma)$. Let $B=[b_{ij}]_{1\leq i\leq d;1\leq j\leq n}$ be a matrix whose entries are integers. For $\epsilon\in\edge(\sigma)$, let
\begin{equation*}
	b_{\epsilon i}=\sum_{j=1}^{d}v_{\epsilon j}b_{ij},
\end{equation*}
and let $b_{\epsilon i}^{+}=b_{\epsilon i}-\min\{b_{\epsilon i}\mid 1\leq i\leq n\}$. Let $X_{0},\dots,X_{n}$ be the coordinate of the $n$-dimensional projective space $\prs^{n}=\proj\Lambda[X_{0},\dots,X_{n}]$. We define
\begin{equation*}
	G_{B}(\sigma)=\sum_{i=1}^{n}X_{i}\prod_{\epsilon\in\edge(\sigma)}t_{\sigma,\epsilon}^{b_{\epsilon i}^{+}}\in\Lambda[\Gamma(\sigma)][X_{0},\dots,X_{n}].
\end{equation*}
Let $D_{B}(\sigma)$ be the divisor of $\prs^{n}_{\trv(\sigma)}$ defined by $G_{B}(\sigma)=0$. We note that $D_{B}(\sigma)$ is irreducible since $G_{B}(\sigma)$ is an irreducible polynomial.

\begin{dfn}
	Let $\Sigma$ be a finite set of $d$-dimensional cones in $\mathbb{Z}^{d}$. If the set of the faces of all $\sigma\in\Sigma$ is a fan in $\mathbb{Z}^{d}$, then we say that $\Sigma$ is \textnormal{generable} and denote the fan by $\Delta(\Sigma)$.
\end{dfn}

Let $\Sigma$ be a generable set of cones in $\mathbb{Z}^{d}$. We denote by $\edge(\Sigma)=\cup_{\sigma\in\Sigma}\edge(\sigma)$. For $\sigma,\sigma'\in\Sigma$, we denote by $\Gamma(\sigma,\sigma')=\Gamma(\sigma)\cup\{t_{\sigma,\epsilon}^{-1}\mid\epsilon\in\edge(\sigma)\setminus\edge(\sigma')\}$. Then, the ring isomorphism
\begin{equation}\label{divisor-glueing-morphism}
	\Lambda[\Gamma(\sigma,\sigma')]\rightarrow\Lambda[\Gamma(\sigma',\sigma)]:t_{\sigma,\epsilon}\mapsto\prod_{\epsilon'\in\edge(\sigma')}t_{\sigma',\epsilon'}^{v_{\epsilon'}^{\top}\cdot u_{\sigma,\epsilon}}
\end{equation}
induces $\spec\Lambda[\Gamma(\sigma,\sigma')]\cong\spec\Lambda[\Gamma(\sigma',\sigma)]$. Therefore $\trv(\sigma)$ for all $\sigma\in\Sigma$ glue together to be the \textit{toric variety} corresponding to $\Sigma$ over $\Lambda$ which we denote by $\trv(\Sigma)$.

\begin{lem}\label{divisor-glue-divisor}
	Let $\Sigma$ be a generable set of cones in $\mathbb{Z}^{d}$, and let $B=[b_{ij}]_{1\leq i\leq d;1\leq j\leq n}$ be a matrix whose entries are integers. Then, for any $\sigma,\sigma'\in\Sigma$, the isomorphism \cref{divisor-glueing-morphism} induces an isomorphism $D_{B}(\sigma)\cap\prs^{n}_{\trv(\sigma)}\cong D_{B}(\sigma')\cap\prs^{n}_{\trv(\sigma')}$.
\end{lem}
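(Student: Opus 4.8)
The plan is to show that, over the common open chart, the two divisors are cut out by equations differing only by a unit; the vanishing loci then coincide. Recall that $\trv(\sigma)$ and $\trv(\sigma')$ are glued along $\spec\Lambda[\Gamma(\sigma,\sigma')]\cong\spec\Lambda[\Gamma(\sigma',\sigma)]$ via \cref{divisor-glueing-morphism}, and that $D_{B}(\sigma)$ is defined by $G_{B}(\sigma)=0$. So it suffices to prove that the image of $G_{B}(\sigma)$ under \cref{divisor-glueing-morphism} equals $G_{B}(\sigma')$ up to multiplication by a unit of $\Lambda[\Gamma(\sigma',\sigma)]$. First I would substitute $t_{\sigma,\epsilon}\mapsto\prod_{\epsilon'\in\edge(\sigma')}t_{\sigma',\epsilon'}^{v_{\epsilon'}^{\top}\cdot u_{\sigma,\epsilon}}$ into each monomial $\prod_{\epsilon\in\edge(\sigma)}t_{\sigma,\epsilon}^{b_{\epsilon i}^{+}}$ of $G_{B}(\sigma)$ and collect exponents, writing the image as $\sum_{i=1}^{n}X_{i}\prod_{\epsilon'\in\edge(\sigma')}t_{\sigma',\epsilon'}^{c_{\epsilon' i}}$ with $c_{\epsilon' i}=\sum_{\epsilon\in\edge(\sigma)}(v_{\epsilon'}^{\top}\cdot u_{\sigma,\epsilon})\,b_{\epsilon i}^{+}$.

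The heart of the computation is to recognize $c_{\epsilon' i}$. Since $\sigma$ is nonsingular of dimension $d$, the generators $\{v_{\epsilon}\}_{\epsilon\in\edge(\sigma)}$ form a $\mathbb{Z}$-basis of $\mathbb{Z}^{d}$ with dual basis $\{u_{\sigma,\epsilon}\}$, so $v_{\epsilon'}=\sum_{\epsilon\in\edge(\sigma)}(v_{\epsilon'}^{\top}\cdot u_{\sigma,\epsilon})v_{\epsilon}$. As $b_{\epsilon i}=\sum_{j}v_{\epsilon j}b_{ji}$ is linear in $v_{\epsilon}$ for fixed $i$, applying this linear functional to the displayed identity yields $\sum_{\epsilon}(v_{\epsilon'}^{\top}\cdot u_{\sigma,\epsilon})b_{\epsilon i}=b_{\epsilon' i}$. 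Setting $m_{\epsilon}=\min\{b_{\epsilon i}\mid 1\leq i\leq n\}$, so that $b_{\epsilon i}^{+}=b_{\epsilon i}-m_{\epsilon}$, I obtain $c_{\epsilon' i}=b_{\epsilon' i}-M_{\epsilon'}$, where $M_{\epsilon'}=\sum_{\epsilon\in\edge(\sigma)}(v_{\epsilon'}^{\top}\cdot u_{\sigma,\epsilon})m_{\epsilon}$ is independent of $i$. Writing $m'_{\epsilon'}=\min\{b_{\epsilon' i}\mid 1\leq i\leq n\}$, factoring out the $i$-independent powers then shows the image of $G_{B}(\sigma)$ equals $\bigl(\prod_{\epsilon'\in\edge(\sigma')}t_{\sigma',\epsilon'}^{m'_{\epsilon'}-M_{\epsilon'}}\bigr)G_{B}(\sigma')$.

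It remains to check that this monomial prefactor is a unit on the overlap. For a shared edge $\epsilon'\in\edge(\sigma)\cap\edge(\sigma')$ the duality relation gives $v_{\epsilon'}^{\top}\cdot u_{\sigma,\epsilon}$ equal to $1$ when $\epsilon=\epsilon'$ and $0$ otherwise, so $M_{\epsilon'}=m_{\epsilon'}$; moreover $b_{\epsilon' i}$ depends only on the ray $\epsilon'$ and not on the ambient cone, whence $m'_{\epsilon'}=m_{\epsilon'}$ and the exponent $m'_{\epsilon'}-M_{\epsilon'}$ vanishes. The surviving factors involve only $t_{\sigma',\epsilon'}$ with $\epsilon'\in\edge(\sigma')\setminus\edge(\sigma)$, which are invertible in $\Lambda[\Gamma(\sigma',\sigma)]$ by the definition of $\Gamma(\sigma',\sigma)$; hence the prefactor is a unit, $G_{B}(\sigma)$ and $G_{B}(\sigma')$ cut out the same subscheme on the overlap, and the asserted isomorphism of divisors follows. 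I expect the main obstacle to be purely the bookkeeping in the two exponent identities — the reduction of $c_{\epsilon' i}$ to $b_{\epsilon' i}-M_{\epsilon'}$ and the vanishing $m'_{\epsilon'}-M_{\epsilon'}=0$ on shared edges — since both rest entirely on the correct use of the dual-basis relation; once these are verified the gluing is immediate.
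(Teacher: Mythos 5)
Your proof is correct and follows essentially the same route as the paper: both compute the ratio of the two defining polynomials under the gluing map, show via the dual-basis relation and the linearity of $b_{\epsilon i}$ in $v_{\epsilon}$ that this ratio is a monomial whose exponents vanish on shared edges, and conclude it is a unit because the remaining variables are inverted in $\Lambda[\Gamma(\sigma',\sigma)]$. The only difference is the direction of the substitution (you push $G_{B}(\sigma)$ into the $\sigma'$-chart, the paper pulls $G_{B}(\sigma')$ back to the $\sigma$-chart), which is immaterial.
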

\begin{proof}
	Note that $b_{\epsilon i}=\sum_{\epsilon'\in\sigma'}(v_{\epsilon}^{\top}\cdot u_{\sigma',\epsilon'})b_{\epsilon' i}$ for all $\epsilon\in\edge(\sigma)$. Let
	\begin{equation*}
		w_{\epsilon}=\sum_{\epsilon'\in\sigma'}(v_{\epsilon}^{\top}\cdot u_{\sigma',\epsilon'})\mid\{b_{\epsilon' i}\mid 1\leq i\leq n\}-\mid\{b_{\epsilon i}\mid 1\leq i\leq n\}.
	\end{equation*}
	We note that $w_{\epsilon}=0$ if $\epsilon\in\edge(\sigma)\cap\edge(\sigma')$. Let $G_{B}(\sigma,\sigma')$ be the inverse image of $G_{B}(\sigma')$ in $\Gamma(\sigma,\sigma')$, then we have
	\begin{equation*}
		G(\sigma,\sigma')/G_{B}(\sigma)=\prod_{\epsilon\in\edge(\sigma)\setminus\edge(\sigma')}t_{\sigma,\epsilon}^{w_{\epsilon}}
	\end{equation*}
	which is invertible in $\Gamma(\sigma,\sigma')$. Then follows the assertions.
\end{proof}

\Cref{divisor-glue-divisor} implies that $D_{B}(\sigma)$ for all $\sigma\in\Sigma$ glue together to be a divisor of $\prs^{n}_{\trv(\Sigma)}$ when $\Sigma$ is generable. We denote this divisor by $D_{B}(\Sigma)$ and note that it is irreducible since it is connected. We denote by $\infty$ the divisor of $\prs^{n}$ defined by $X_{0}$, and denote $\trv(\Sigma)\times\infty$ by $\infty_{\trv(\Sigma)}$. For $\epsilon\in\edge(\Sigma)$, let $D(\Sigma,\epsilon)$ be the closure of the orbit of $\epsilon$ in $\trv(\Sigma)$ by the torus action. We note that $D(\Sigma,\epsilon)\cap\trv(\sigma)$ is the divisor of $\trv(\sigma)$ defined by $t_{\sigma,\epsilon}$ for $\sigma\in\Sigma$. Then, we write
\begin{equation*}
	\overline{D}_{B}(\Sigma)=D_{B}(\Sigma)\cup\infty_{\trv(\Sigma)}\bigcup_{\epsilon\in\edge(\Sigma)}\prs^{n}_{D(\Sigma,\epsilon)}.
\end{equation*}
Set $x_{i}=\frac{X_{i}}{X_{0}}$ for $1\leq i\leq n$, then $\afs^{n}=D_{+}(X_{0})\subset\proj\Lambda[X_{0},\dots,X_{n}]=\prs^{n}$. Let $j:\afs^{n}\rightarrow\prs^{n}$ be the open immersion. Therefore, the open subset $U_{B}\subset\afs^{n}$ defined by \cref{sheaf-kummer-sheaf-lisse-locus} is also an open subset of $\prs^{n}_{\trv(\Sigma)}$. We claim that $U_{B}=\prs^{n}_{\trv(\Sigma)}\setminus\overline{D}_{B}(\Sigma)$ and the explanation follows below.

First, by the fact that $\afs^n=\prs^n\backslash\infty$, we have
	\begin{equation*}
		\afs^n_{\trv(\Sigma)}=\trv(\Sigma)\times\afs^n=\trv(\Sigma)\times\prs^n\backslash\trv(\Sigma)\times\infty=\prs^n_{\trv(\Sigma)}\backslash\infty_{\trv(\Sigma)}.
	\end{equation*}
	At the same time, by the definition of $D(\Sigma,\epsilon)$, it is clear that
	\begin{equation*}
		\trs^d=\left.\trv(\Sigma)\middle\backslash\bigcup_{\epsilon\in\edge(\Sigma)}D(\Sigma,\epsilon)\right..
	\end{equation*}
	Therefore, we have
	\begin{equation*}
		\prs^n_{\trs^d}=\left.\trs^d\times\prs^n=\trv(\Sigma)\times\prs^n\middle\backslash\bigcup_{\epsilon\in\edge(\Sigma)}D(\Sigma,\epsilon)\times\prs^n\right.=\left.\prs^n_{\trv(\Sigma)}\middle\backslash\bigcup_{\epsilon\in\edge(\Sigma)}\prs^n_{D(\Sigma,\epsilon)}\right..
	\end{equation*}
	Here, we note that $\afs^n_{\trs^d}=\afs^n_{\trv(\Sigma)}\cap\prs^n_{\trs^d}$. Next, by the definition of $D_B(\Sigma)$, it is clear that
	\begin{equation*}
		D_B(\Sigma)\cap\afs^n_{\trs^d}=D_B(\Sigma)\cap\trs^d\times\afs^n=\overline{g_B^{-1}(0)}.
	\end{equation*}
	Therefore, we have
	\begin{align*}
		U_B=&\afs^n_{\trs^d}\backslash\overline{g_B^{-1}(0)}=(\afs^n_{\trv(\Sigma)}\cap\prs^n_{\trs^d})\backslash(D_B(\Sigma)\cap\afs^n_{\trs^d})\\
		=&\left.\left(\prs^n_{\trv(\Sigma)}\middle\backslash\infty_{\trv(\Sigma)}\bigcup_{\epsilon\in\edge(\Sigma)}\prs^n_{D(\Sigma,\epsilon)}\right)\right\backslash D_B(\Sigma)=\prs^{n}_{\trv(\Sigma)}\setminus\overline{D}_{B}(\Sigma).
	\end{align*}

\begin{dfn}
	Let $B=[b_{ij}]_{1\leq i\leq d;1\leq j\leq n}$ be a matrix whose entries are integers.
	\begin{enumerate}
		\item For a $d$-dimensional nonsingular cone $\sigma$ in $\mathbb{Z}^{d}$, if there exists a permutation $\perm$ of degree $n$, such that
		\begin{equation*}
			b_{\epsilon\perm(1)}\leq\dots\leq b_{\epsilon\perm(n)}
		\end{equation*}
		for all $\epsilon\in\edge(\sigma)$, then we say that $B$ is \textnormal{$\sigma$-good}.
		
		\item For a generable set $\Sigma$ of cones in $\mathbb{Z}^{d}$, if $B$ is $\sigma$-good for every $\sigma\in\Sigma$, then we say that $B$ is \textnormal{$\Sigma$-good}.
	\end{enumerate}
\end{dfn}

\begin{prp}\label{divisor-good-is-normal}
	Let $\Sigma$ be a generable set of cones in $\mathbb{Z}^{d}$, and let $B=[b_{ij}]_{1\leq i\leq d;1\leq j\leq n}$ be a matrix whose entries are integers. If $B$ is $\Sigma$-good, then $\overline{D}_{B}(\Sigma)$ is a divisor with simple normal crossings.
\end{prp}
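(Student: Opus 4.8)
The plan is to check the simple normal crossing property locally and reduce to the affine charts $\prs^{n}_{\trv(\sigma)}$ for $\sigma\in\Sigma$, which cover $\prs^{n}_{\trv(\Sigma)}$. By \Cref{divisor-glue-divisor} and the definition of $\overline{D}_{B}(\Sigma)$, using that $D(\Sigma,\epsilon)\cap\trv(\sigma)=\emptyset$ for $\epsilon\in\edge(\Sigma)\setminus\edge(\sigma)$, the trace of $\overline{D}_{B}(\Sigma)$ on $\prs^{n}_{\trv(\sigma)}\cong\afs^{d}\times\prs^{n}$---with coordinates $t_{\sigma,\epsilon}$ ($\epsilon\in\edge(\sigma)$) on the first factor and $X_{0},\dots,X_{n}$ on the second---is the union of the $d$ coordinate divisors $\{t_{\sigma,\epsilon}=0\}$, the hyperplane $\{X_{0}=0\}$, and $D_{B}(\sigma)=\{G_{B}(\sigma)=0\}$. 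The former $d+1$ divisors are coordinate hyperplanes of the toric variety $\afs^{d}\times\prs^{n}$ and visibly form a simple normal crossing configuration, so the entire content is to prove that $D_{B}(\sigma)$ is smooth and meets this configuration transversally.

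Next I would exploit the $\sigma$-good hypothesis. Fix a permutation $\perm$ with $b_{\epsilon\perm(1)}\leq\dots\leq b_{\epsilon\perm(n)}$ for every $\epsilon\in\edge(\sigma)$; then $b_{\epsilon\perm(1)}=\min_{i}b_{\epsilon i}$, so $b_{\epsilon\perm(1)}^{+}=0$ for all $\epsilon$. Consequently the coefficient of $X_{\perm(1)}$ in the form $G_{B}(\sigma)$, which is linear in $X_{1},\dots,X_{n}$, equals $\prod_{\epsilon}t_{\sigma,\epsilon}^{0}=1$ identically in the variables $t_{\sigma,\epsilon}$. This is the engine of the argument: the distinguished monomial survives every toric degeneration obtained by setting some coordinates $t_{\sigma,\epsilon}$ to zero.

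I would then verify transversality chart by chart on $\prs^{n}$. On $\afs^{d}\times D_{+}(X_{k})$ put $y_{i}=X_{i}/X_{k}$ and $g_{k}=G_{B}(\sigma)/X_{k}=\sum_{i=1}^{n}y_{i}\prod_{\epsilon}t_{\sigma,\epsilon}^{b_{\epsilon i}^{+}}$ with the convention $y_{k}=1$; since $X_{0}$ does not occur in $G_{B}(\sigma)$, the function $g_{k}$ does not involve $y_{0}$. At a point $P$ set $S=\{\epsilon:t_{\sigma,\epsilon}(P)=0\}$ and $I_{S}=\{i:b_{\epsilon i}^{+}=0\text{ for all }\epsilon\in S\}$. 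Because $g_{k}$ is linear in the $y_{i}$, the coefficient of $dy_{i}$ in $dg_{k}$ at $P$ is $\prod_{\epsilon}t_{\sigma,\epsilon}(P)^{b_{\epsilon i}^{+}}$, which is nonzero precisely when $i\in I_{S}$. Since $\perm(1)\in I_{S}$ for every $S$, when $k\neq\perm(1)$ the differential $dg_{k}$ contains $dy_{\perm(1)}$ with coefficient $1$; hence $D_{B}(\sigma)$ is smooth in this chart and $dg_{k}$ is linearly independent of the $dt_{\sigma,\epsilon}$ ($\epsilon\in S$) and, when $k\neq0$, of $dy_{0}$.

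The main obstacle is the single remaining chart $k=\perm(1)$, where the coefficient-$1$ term has degenerated into the normalization $y_{\perm(1)}=1$ and no longer supplies a differential. Here I would use that $P$ lies on $D_{B}(\sigma)$: the relation $g_{\perm(1)}(P)=1+\sum_{i\in I_{S},\,i\neq\perm(1)}y_{i}(P)\prod_{\epsilon}t_{\sigma,\epsilon}(P)^{b_{\epsilon i}^{+}}=0$ forces some index $i_{0}\in I_{S}$ with $i_{0}\neq\perm(1)$ and $\prod_{\epsilon}t_{\sigma,\epsilon}(P)^{b_{\epsilon i_{0}}^{+}}\neq0$. As $i_{0}\in\{1,\dots,n\}\setminus\{\perm(1)\}$, the coordinate $y_{i_{0}}$ is genuine and distinct from $y_{0}$, and $dy_{i_{0}}$ appears in $dg_{\perm(1)}$ with nonzero coefficient, yielding smoothness and transversality here too. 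Assembling the two cases over all charts $D_{+}(X_{k})$ and all $\sigma\in\Sigma$ shows that $\overline{D}_{B}(\Sigma)$ is a divisor with simple normal crossings. Since every nonvanishing used above takes place in $\Omega^{1}$ and is insensitive to the residue field, the same computation applies uniformly for $\Lambda\in\{\intr,\rsf,\frf\}$.
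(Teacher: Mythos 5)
Your proof is correct, and it takes a genuinely different (more local) route than the paper. Both arguments pivot on the same observation: $\sigma$-goodness forces $b_{\epsilon\perm(1)}^{+}=0$ for every $\epsilon\in\edge(\sigma)$, so the coefficient of $X_{\perm(1)}$ in $G_{B}(\sigma)$ is the unit $1$. The paper exploits this globally on each toric chart $\prs^{n}_{\trv(\sigma)}$: it applies the $\Lambda[\Gamma(\sigma)]$-linear automorphism sending $X_{\perm(1)}$ to $G_{B}(\sigma)$ (and fixing the other homogeneous coordinates and the $t_{\sigma,\epsilon}$), which carries $\overline{D}_{B}(\Sigma)\cap\prs^{n}_{\trv(\sigma)}$ onto the vanishing locus of the monomial $X_{0}X_{\perm(1)}\prod_{\epsilon}t_{\sigma,\epsilon}$, after which the simple normal crossing property is immediate. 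You instead verify smoothness and transversality chart by chart on $\prs^{n}$ via a Jacobian computation, using the unit coefficient of $dy_{\perm(1)}$ on the charts $k\neq\perm(1)$ and, on the exceptional chart $k=\perm(1)$ where that term is absorbed into the dehomogenization, extracting a nonvanishing coefficient $dy_{i_{0}}$ from the equation $g_{\perm(1)}(P)=0$ itself. Your identification of the trace of $\overline{D}_{B}(\Sigma)$ on $\prs^{n}_{\trv(\sigma)}$ (using $D(\Sigma,\epsilon)\cap\trv(\sigma)=\varnothing$ for $\epsilon\notin\edge(\sigma)$) is also the decomposition the paper uses implicitly. The paper's automorphism argument is shorter and handles all charts of $\prs^{n}$ at once; yours is more elementary, makes the role of the $\sigma$-good hypothesis visible at the level of differentials, and requires the extra case analysis that the coordinate change sidesteps. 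Both are valid over $\Lambda\in\{\intr,\rsf,\frf\}$ for the reason you state.
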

\begin{proof}
	For any $\sigma\in\Sigma$, we may assume that $b_{\epsilon 1}\leq\dots\leq b_{\epsilon n}$, in which case we have
	\begin{equation*}
		G_{B}(\sigma)=\biggl(\prod_{\epsilon\in\edge(\sigma)}t_{\sigma,\epsilon}\biggr)^{b_{\epsilon 1}}\biggl(X_{1}+\sum_{i=2}^{n}X_{i}\prod_{\epsilon\in\edge(\sigma)}t_{\sigma,\epsilon}^{b_{\epsilon i}-b_{\epsilon 1}}\biggr).
	\end{equation*}
	The image of $\overline{D}_{B}(\Sigma)\cap\prs^{n}_{\trv(\sigma)}$ by the automorphism of $\prs^{n}_{\trv(\sigma)}$ defined by the $\Lambda[\Gamma(\sigma)]$-automorphism
	\begin{equation*}
	\begin{aligned}
		\Lambda[\Gamma(\sigma)][X_{i}\mid 0\leq i\leq n]&\rightarrow\Lambda[\Gamma(\sigma)][X_{i}\mid 0\leq i\leq n]\\
		X_{i}&\mapsto\begin{cases}
			X_{1}+\sum_{i=2}^{n}X_{i}\prod_{\epsilon\in\edge(\sigma)}t_{\sigma,\epsilon}^{b_{\epsilon i}-b_{\epsilon 1}}&i=1\\
			X_{i}&2\leq i\leq n
		\end{cases}
	\end{aligned}
	\end{equation*}
	is the divisor of $\prs^{n}_{\trv(\sigma)}$ defined by $X_{0}X_{1}\prod_{\epsilon\in\edge(\sigma)}t_{\sigma,\epsilon}$ which is a divisor with simple normal crossings. Then, the assertion follows.
\end{proof}

\subsection{Standard blow-ups of a generable set}\label{blowup}

We introduce certain types of blow-ups along the singularities of the complement of the lisse locus of an $\ell$-adic Kummer-type sheaf and study their properties
which we then use to construct the resolution of singularities.

Let $\Sigma$ be a generable set of cones in $\mathbb{Z}^{d}$. For $\epsilon\in\edge(\Sigma)$, we denote by
\begin{equation*}
	\Sigma(\epsilon)=\{\sigma\in\Sigma\mid\epsilon\in\edge(\sigma)\}\subset\Sigma.
\end{equation*}

\begin{dfn}
	Let $\Sigma$ be a generable set of cones in $\mathbb{Z}^{d}$. Let $\epsilon_{1}\neq\epsilon_{2}\in\edge(\Sigma)$ satisfying $\Sigma(\epsilon_{1})\cap\Sigma(\epsilon_{2})\neq\varnothing$.
	\begin{enumerate}
		\item We define the \textnormal{standard blow-up} of $\Sigma$ along $\epsilon_{1},\epsilon_{2}$ to be the set
		\begin{equation*}
			\widetilde{\Sigma}=(\Sigma\setminus(\Sigma(\epsilon_{1})\cup\Sigma(\epsilon_{2})))\bigcup_{\sigma\in\Sigma(\epsilon_{1})\cap\Sigma(\epsilon_{2})}\{\sigma(\epsilon_{1}),\sigma(\epsilon_{2})\}
		\end{equation*}
		and write $\widetilde{\Sigma}\rightarrow\Sigma$. Here $\sigma(\epsilon_{i})$ is the cone in $\mathbb{Z}^{d}$ generated by the set of vectors
		\begin{equation*}
			\{v_{\epsilon_{1}}+v_{\epsilon_{2}}\}\cup\{v_{\epsilon}\mid\epsilon\in\edge(\sigma)\setminus\{\epsilon_{j}\};\{j\}=\{1,2\}\setminus\{i\}\}.
		\end{equation*}
		
		\item We define the \textnormal{exceptional edge} of $\widetilde{\Sigma}\rightarrow\Sigma$ to be the cone in $\mathbb{Z}^{d}$ generated by $v_{\epsilon_{1}}+v_{\epsilon_{2}}$.
	\end{enumerate}
\end{dfn}

\begin{prp}
	Let $\Sigma$ be a generable set of cones in $\mathbb{Z}^{d}$, and let $\epsilon_{1}\neq\epsilon_{2}\in\edge(\Sigma)$ satisfying $\Sigma(\epsilon_{1})\cap\Sigma(\epsilon_{2})\neq\varnothing$. Then, the following statements are true.
	\begin{enumerate}
		\item The standard blow-up $\widetilde{\Sigma}$ of $\Sigma$ along $\epsilon_{1},\epsilon_{2}$ is generable.
		\item Let $\epsilon_{\ex}$ be the exceptional edge of $\widetilde{\Sigma}\rightarrow\Sigma$, then $\edge(\widetilde{\Sigma})=\edge(\Sigma)\cup\{\epsilon_{\ex}\}$.
		\item $\trv(\widetilde{\Sigma})$ is the blow-up of $\trv(\Sigma)$ along $D(\Sigma,\epsilon_{1})\cap D(\Sigma,\epsilon_{2})$.
		\item $D(\widetilde{\Sigma},\epsilon)$ is the strict transform of $D(\Sigma,\epsilon)$ corresponding to the blow-up $\trv(\widetilde{\Sigma})\rightarrow\trv(\Sigma)$ for any $\epsilon\in\edge(\Sigma)$.
		\item $D(\widetilde{\Sigma},\epsilon_{\ex})$ is the exceptional divisor of the blow-up $\trv(\widetilde{\Sigma})\rightarrow\trv(\Sigma)$.
	\end{enumerate}
\end{prp}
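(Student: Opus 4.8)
The plan is to recognize the standard blow-up as the star subdivision of the fan $\Delta(\Sigma)$ along the ray $\epsilon_{\ex}$ generated by $v_{\epsilon_1}+v_{\epsilon_2}$, and to verify all five assertions affine-locally, one cone $\sigma\in\Sigma(\epsilon_1)\cap\Sigma(\epsilon_2)$ at a time. Since $D(\Sigma,\epsilon)\cap\trv(\sigma)$ is the divisor defined by $t_{\sigma,\epsilon}$, the center $D(\Sigma,\epsilon_1)\cap D(\Sigma,\epsilon_2)$ meets $\trv(\sigma)$ exactly when $\{\epsilon_1,\epsilon_2\}\subset\edge(\sigma)$; hence the morphism $\trv(\widetilde{\Sigma})\to\trv(\Sigma)$ should be an isomorphism over every other chart, and it suffices to understand a single $\sigma$ containing both edges and then check that the local pictures glue.

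For (1) and (2), the first step is the nonsingularity of $\sigma(\epsilon_1)$ and $\sigma(\epsilon_2)$. The generator matrix of $\sigma(\epsilon_1)$ is obtained from $V_{\sigma}$ by adding the column $v_{\epsilon_1}$ to the column $v_{\epsilon_2}$, an elementary operation, so its determinant equals $\det V_{\sigma}=\pm1$, and likewise for $\sigma(\epsilon_2)$; thus both are nonsingular $d$-dimensional cones. Next I would show $\sigma(\epsilon_1)\cup\sigma(\epsilon_2)=\sigma$ with intersection a common facet: writing a point of $\sigma$ as $\sum_{\epsilon}c_{\epsilon}v_{\epsilon}$, it lies in $\sigma(\epsilon_1)$ iff $c_{\epsilon_1}\geq c_{\epsilon_2}$ and in $\sigma(\epsilon_2)$ iff $c_{\epsilon_2}\geq c_{\epsilon_1}$, the overlap $c_{\epsilon_1}=c_{\epsilon_2}$ being the facet spanned by $v_{\epsilon_1}+v_{\epsilon_2}$ and the generators $v_{\epsilon}$ for $\epsilon\in\edge(\sigma)\setminus\{\epsilon_1,\epsilon_2\}$. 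Because this separating condition involves only the coordinates along the common face $\mathrm{cone}(v_{\epsilon_1},v_{\epsilon_2})$, the subdivisions chosen in different $\sigma$ agree on shared faces, so the faces of all cones of $\widetilde{\Sigma}$ form a fan, giving (1). Reading off the edges of $\sigma(\epsilon_1),\sigma(\epsilon_2)$ and noting that every old edge survives in some modified or untouched cone while only $\epsilon_{\ex}$ is created yields (2).

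For (3)--(5), the core computation is the passage to the dual bases. Tracking the column operation above through $(V^{-1})^{\top}$ gives $u_{\sigma(\epsilon_1),\epsilon_{\ex}}=u_{\sigma,\epsilon_2}$, $u_{\sigma(\epsilon_1),\epsilon_1}=u_{\sigma,\epsilon_1}-u_{\sigma,\epsilon_2}$, and $u_{\sigma(\epsilon_1),\epsilon}=u_{\sigma,\epsilon}$ otherwise, whence the coordinate ring $\Lambda[\Gamma(\sigma(\epsilon_1))]$ becomes $\Lambda[\Gamma(\sigma)][t_{\sigma,\epsilon_1}/t_{\sigma,\epsilon_2}]$, which is exactly the affine chart of the blow-up of $\trv(\sigma)=\spec\Lambda[\Gamma(\sigma)]$ along the ideal $(t_{\sigma,\epsilon_1},t_{\sigma,\epsilon_2})$ cutting out $D(\Sigma,\epsilon_1)\cap D(\Sigma,\epsilon_2)$; symmetrically $\sigma(\epsilon_2)$ yields the complementary chart $\Lambda[\Gamma(\sigma)][t_{\sigma,\epsilon_2}/t_{\sigma,\epsilon_1}]$. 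Gluing these over all $\sigma\in\Sigma(\epsilon_1)\cap\Sigma(\epsilon_2)$ and with the untouched charts proves (3). In the chart $\Lambda[\Gamma(\sigma)][t_{\sigma,\epsilon_1}/t_{\sigma,\epsilon_2}]$ the center ideal becomes principal, generated by $t_{\sigma,\epsilon_2}=t_{\sigma(\epsilon_1),\epsilon_{\ex}}$, so the exceptional divisor is $D(\widetilde{\Sigma},\epsilon_{\ex})$, giving (5); and the total transform of $D(\Sigma,\epsilon_1)=V(t_{\sigma,\epsilon_1})$ factors as $V(t_{\sigma,\epsilon_2})\cup V(t_{\sigma,\epsilon_1}/t_{\sigma,\epsilon_2})$, whose non-exceptional component $V(t_{\sigma(\epsilon_1),\epsilon_1})=D(\widetilde{\Sigma},\epsilon_1)$ is the strict transform, giving (4).

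I expect the main obstacle to be (3)--(5), specifically the bookkeeping that turns the star subdivision into the standard blow-up charts: one must carry the dual generators $u_{\sigma,\epsilon}$ correctly through the column operation, confirm that the resulting monomial localization is precisely the affine chart of the blow-up along $(t_{\sigma,\epsilon_1},t_{\sigma,\epsilon_2})$, and then verify that these local blow-ups glue compatibly --- using that the morphism is an isomorphism away from the center --- to the global blow-up of $\trv(\Sigma)$. By contrast, parts (1) and (2) reduce to an elementary determinant computation and a direct inspection of edges.
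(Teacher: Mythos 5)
Your proposal is correct and follows essentially the same route as the paper: an affine-local verification on the charts $\trv(\sigma(\epsilon_{1})),\trv(\sigma(\epsilon_{2}))$, identifying them via the dual generators $u_{\sigma(\epsilon_{i}),\epsilon}$ with the two standard charts of the blow-up along $(t_{\sigma,\epsilon_{1}},t_{\sigma,\epsilon_{2}})$ and reading off the strict transforms and the exceptional divisor there. Your description of the common facet as $\sigma\cap(u_{\sigma,\epsilon_{1}}-u_{\sigma,\epsilon_{2}})^{\perp}$ (equivalently $c_{\epsilon_{1}}=c_{\epsilon_{2}}$) is the correct version of what the paper writes with a sign slip as $(u_{\sigma,\epsilon_{1}}+u_{\sigma,\epsilon_{2}})^{\perp}$, and your determinant argument for the nonsingularity of the cones $\sigma(\epsilon_{i})$ makes explicit a step the paper leaves implicit.
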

\begin{proof}
	We first prove 1. For any $\sigma\in\Sigma(\epsilon_{1})\cap\Sigma(\epsilon_{2})$, we note that
	\begin{equation*}
		\sigma(\epsilon_{1})\cap\sigma(\epsilon_{2})=\sigma(\epsilon_{1})\cap(u_{\sigma,\epsilon_{1}}+u_{\sigma,\epsilon_{2}})^{\perp}=\sigma(\epsilon_{2})\cap(u_{\sigma,\epsilon_{1}}+u_{\sigma,\epsilon_{2}}).
	\end{equation*}
	For $\sigma'\in(\Sigma(\epsilon_{1})\cap\Sigma(\epsilon_{2}))\setminus\{\sigma\}$ and $i\in\{1,2\}$, we note that
	\begin{equation*}
	\begin{aligned}
		\sigma(\epsilon_{i})\cap\sigma'(\epsilon_{i})&=\sigma(\epsilon_{i})\cap(\sigma\cap\sigma')=\sigma'(\epsilon_{i})\cap(\sigma\cap\sigma'),\\
		\sigma(\epsilon_{1})\cap\sigma'(\epsilon_{2})&=(\sigma(\epsilon_{1})\cap\sigma(\epsilon_{2}))\cap(\sigma(\epsilon_{2})\cap\sigma'(\epsilon_{2})).
	\end{aligned}
	\end{equation*}
	Furthermore, for $\sigma''\in\Sigma\setminus(\Sigma(\epsilon_{1})\cup\Sigma(\epsilon_{2}))$, we note that
	\begin{equation*}
		\sigma(\epsilon_{i})\cap\sigma''=\sigma(\epsilon_{i})\cap(\sigma\cap\sigma'')=(\sigma(\epsilon_{i})\cap\sigma)\cap\sigma''.
	\end{equation*}
	Therefore $\widetilde{\Sigma}$ is generable.
	
	Next, we prove 2. For $\sigma\in\Sigma(\epsilon_{1})\cap\Sigma(\epsilon_{2})$ and $i\neq j\in\{1,2\}$, we note that $\edge(\sigma(\epsilon_{i}))=(\edge(\sigma)\cup\{\epsilon_{\ex}\})\setminus\{\epsilon_{j}\}$. Then the assertion follows.
	
	We move on to prove 3. For $\sigma\in\Sigma(\epsilon_{1})\cap\Sigma(\epsilon_{2})$ and $i\in\{1,2\}$, let $\mathrm{bl}_{\sigma(\epsilon_{i})}:\trv(\sigma(\epsilon_{i}))\rightarrow\trv(\sigma)$ be the morphism defined by the ring homomorphism
	\begin{equation*}
		\Lambda[\Gamma(\sigma)]\rightarrow\Lambda[\Gamma(\sigma(\epsilon_{i}))]:t_{\sigma,\epsilon}\mapsto\begin{cases}
				t_{\sigma(\epsilon_{i}),\epsilon_{i}}\cdot t_{\sigma(\epsilon_{i}),\epsilon_{\ex}}&\epsilon=\epsilon_{i}\\
				t_{\sigma,\epsilon}&\epsilon\in\edge(\sigma)\setminus\{\epsilon_{i}\}
			\end{cases}.
	\end{equation*}
	For $\sigma\in\Sigma\setminus(\Sigma(\epsilon_{1})\cup\Sigma(\epsilon_{2}))$, let $\mathrm{bl}_{\sigma}:\trv(\sigma)\rightarrow\trv(\sigma)$ be the identity. Then $\mathrm{bl}_{\sigma}$ for all $\sigma\in\edge(\widetilde{\Sigma})$ glue together to be a morphism $\mathrm{bl}:\trv(\widetilde{\Sigma})\rightarrow\trv(\Sigma)$, which gives the morphism of blow-up.
	
	To continue, we prove 4. For $\sigma\in\Sigma(\epsilon_{1})\cap\Sigma(\epsilon_{2})$ and $i\in\{1,2\}$, the inverse image of $D(\Sigma,\epsilon)\cap\trv(\sigma)$ along $\mathrm{bl}_{\sigma(\epsilon_{i})}$ is the divisor of $\trv(\sigma(\epsilon_{i}))$ defined by $t_{\sigma(\epsilon_{i}),\epsilon_{i}}\cdot t_{\sigma(\epsilon_{i}),\epsilon_{\ex}}$. Then the assertion follows.
	
	Finally, we prove 5. For $\sigma\in\Sigma(\epsilon_{1})\cap\Sigma(\epsilon_{2})$, we note that $D(\widetilde{\Sigma},\epsilon_{\ex})\cap\trv(\sigma)$ is the divisor of $\trv(\sigma(\epsilon_{i}))$ defined by $t_{\sigma(\epsilon_{i}),\epsilon_{\ex}}$. Then the assertion follows.
\end{proof}

\begin{prp}
	Let $\widetilde{\Sigma}\rightarrow\Sigma$ be the standard blow-up of generable set of cones in $\mathbb{Z}^{d}$ along $\epsilon_{1},\epsilon_{2}\in\edge(\Sigma)$, and let $B=[b_{ij}]_{1\leq i\leq d;1\leq j\leq n}$ be a matrix whose entries are integers. Then, the following statements are true.
	\begin{enumerate}
		\item $D_{B}(\widetilde{\Sigma})$ is the strict transform of $D_{B}(\Sigma)$ corresponding to $\prs^{n}_{\trv(\widetilde{\Sigma})}\rightarrow\prs^{n}_{\trv(\Sigma)}$.
		\item $\overline{D}_{B}(\widetilde{\Sigma})$ is the total transform of $\overline{D}_{B}(\Sigma)$ corresponding to $\prs^{n}_{\trv(\widetilde{\Sigma})}\rightarrow\prs^{n}_{\trv(\Sigma)}$.
		\item For $\sigma\in\Sigma(\epsilon_{1})\cap\Sigma(\epsilon_{2})$, if $B$ is $\sigma$-good, then $B$ is $\sigma(\epsilon_{1})$-good as well as $\sigma(\epsilon_{2})$-good.
	\end{enumerate}
\end{prp}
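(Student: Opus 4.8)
The plan is to reduce all three assertions to a local computation on the affine charts of $\trv(\widetilde{\Sigma})$. Away from the blow-up centre the morphism $\mathrm{bl}\colon\trv(\widetilde{\Sigma})\to\trv(\Sigma)$ is an isomorphism, so for a cone $\sigma\in\Sigma\setminus(\Sigma(\epsilon_{1})\cup\Sigma(\epsilon_{2}))$ the chart $\trv(\sigma)$ is untouched and the divisors simply correspond; the only charts on which anything moves are $\trv(\sigma(\epsilon_{1}))$ and $\trv(\sigma(\epsilon_{2}))$ for $\sigma\in\Sigma(\epsilon_{1})\cap\Sigma(\epsilon_{2})$. On these I would unwind the ring homomorphism defining $\mathrm{bl}_{\sigma(\epsilon_{i})}$ into the explicit monomial rule $t_{\sigma,\epsilon_{i}}\mapsto t_{\sigma(\epsilon_{i}),\epsilon_{i}}t_{\sigma(\epsilon_{i}),\epsilon_{\ex}}$, $t_{\sigma,\epsilon_{j}}\mapsto t_{\sigma(\epsilon_{i}),\epsilon_{\ex}}$ (with $\{j\}=\{1,2\}\setminus\{i\}$), and $t_{\sigma,\epsilon}\mapsto t_{\sigma(\epsilon_{i}),\epsilon}$ for the remaining edges; the middle case, which sends the dropped coordinate to the exceptional coordinate, is the one obscured by the shorthand in the construction above. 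By symmetry it suffices to treat the chart $\trv(\sigma(\epsilon_{1}))$.

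The heart of the matter is the pullback of $G_{B}(\sigma)$. Since $v_{\epsilon_{\ex}}=v_{\epsilon_{1}}+v_{\epsilon_{2}}$, linearity of $\epsilon\mapsto b_{\epsilon i}=\sum_{j}v_{\epsilon j}b_{ij}$ gives $b_{\epsilon_{\ex}i}=b_{\epsilon_{1}i}+b_{\epsilon_{2}i}$ for every $i$. Substituting the rule above into $G_{B}(\sigma)=\sum_{i}X_{i}\prod_{\epsilon}t_{\sigma,\epsilon}^{b_{\epsilon i}^{+}}$, the exponent of $t_{\sigma(\epsilon_{1}),\epsilon_{\ex}}$ in the $i$-th monomial becomes $b_{\epsilon_{1}i}^{+}+b_{\epsilon_{2}i}^{+}$ while the other exponents are unchanged. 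Writing $m_{\epsilon}=\min\{b_{\epsilon i}\mid 1\le i\le n\}$ and comparing with $b_{\epsilon_{\ex}i}^{+}=b_{\epsilon_{1}i}+b_{\epsilon_{2}i}-m_{\epsilon_{\ex}}$, I would check that $b_{\epsilon_{1}i}^{+}+b_{\epsilon_{2}i}^{+}-b_{\epsilon_{\ex}i}^{+}=m_{\epsilon_{\ex}}-m_{\epsilon_{1}}-m_{\epsilon_{2}}=:c$ is a constant independent of $i$, and that $c\ge 0$ because $m_{\epsilon_{\ex}}=\min_{i}(b_{\epsilon_{1}i}+b_{\epsilon_{2}i})\ge m_{\epsilon_{1}}+m_{\epsilon_{2}}$. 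This yields the clean identity $\mathrm{bl}_{\sigma(\epsilon_{1})}^{*}G_{B}(\sigma)=t_{\sigma(\epsilon_{1}),\epsilon_{\ex}}^{c}\,G_{B}(\sigma(\epsilon_{1}))$. Since some column $i_{0}$ attains the minimum $m_{\epsilon_{\ex}}$, the factor $G_{B}(\sigma(\epsilon_{1}))$ is not divisible by $t_{\sigma(\epsilon_{1}),\epsilon_{\ex}}$, so $D_{B}(\sigma(\epsilon_{1}))=\{G_{B}(\sigma(\epsilon_{1}))=0\}$ is exactly the strict transform of $D_{B}(\sigma)$ and the exceptional part has been split off; gluing over all charts proves statement~1.

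For statement~2 I would assemble the set-theoretic inverse image of each piece of $\overline{D}_{B}(\Sigma)=D_{B}(\Sigma)\cup\infty_{\trv(\Sigma)}\cup\bigcup_{\epsilon\in\edge(\Sigma)}\prs^{n}_{D(\Sigma,\epsilon)}$. The computation above gives $\mathrm{bl}^{-1}(D_{B}(\Sigma))=D_{B}(\widetilde{\Sigma})\cup\prs^{n}_{D(\widetilde{\Sigma},\epsilon_{\ex})}$ when $c>0$ and $D_{B}(\widetilde{\Sigma})$ when $c=0$; since the blow-up only affects the $\trv$-factor, $\mathrm{bl}^{-1}(\infty_{\trv(\Sigma)})=\infty_{\trv(\widetilde{\Sigma})}$; and from $\mathrm{bl}_{\sigma(\epsilon_{1})}^{*}t_{\sigma,\epsilon_{1}}=t_{\sigma(\epsilon_{1}),\epsilon_{1}}t_{\sigma(\epsilon_{1}),\epsilon_{\ex}}$ (with its $\epsilon_{2}$-analogue), together with the already established facts that $D(\widetilde{\Sigma},\epsilon)$ is the strict transform of $D(\Sigma,\epsilon)$ and $D(\widetilde{\Sigma},\epsilon_{\ex})$ is the exceptional divisor, I obtain $\mathrm{bl}^{-1}(\prs^{n}_{D(\Sigma,\epsilon_{i})})=\prs^{n}_{D(\widetilde{\Sigma},\epsilon_{i})}\cup\prs^{n}_{D(\widetilde{\Sigma},\epsilon_{\ex})}$ for $i=1,2$ and $\mathrm{bl}^{-1}(\prs^{n}_{D(\Sigma,\epsilon)})=\prs^{n}_{D(\widetilde{\Sigma},\epsilon)}$ otherwise. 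Taking the union and using $\edge(\widetilde{\Sigma})=\edge(\Sigma)\cup\{\epsilon_{\ex}\}$, the exceptional component $\prs^{n}_{D(\widetilde{\Sigma},\epsilon_{\ex})}$ is in any case supplied by the preimages of the toric pieces, and every component of $\overline{D}_{B}(\widetilde{\Sigma})$ occurs exactly once, so the reduced total transform of $\overline{D}_{B}(\Sigma)$ equals $\overline{D}_{B}(\widetilde{\Sigma})$.

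Finally, statement~3 follows from the same relation $b_{\epsilon_{\ex}i}=b_{\epsilon_{1}i}+b_{\epsilon_{2}i}$: if a single permutation $\perm$ makes $b_{\epsilon\perm(1)}\le\dots\le b_{\epsilon\perm(n)}$ for all $\epsilon\in\edge(\sigma)$, then adding the two sequences for $\epsilon_{1}$ and $\epsilon_{2}$ (both reordered by the same $\perm$) keeps them non-decreasing, so $b_{\epsilon_{\ex}\perm(1)}\le\dots\le b_{\epsilon_{\ex}\perm(n)}$ and $\perm$ simultaneously witnesses $\sigma(\epsilon_{1})$- and $\sigma(\epsilon_{2})$-goodness, since $\edge(\sigma(\epsilon_{i}))\subset\edge(\sigma)\cup\{\epsilon_{\ex}\}$. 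I expect the main obstacle to be the exponent bookkeeping in the second paragraph: one must verify both that the exceptional multiplicity $c$ is a genuine non-negative integer independent of $i$, so that the pullback is a polynomial factoring off a power of the exceptional equation, and that the $b^{+}$-normalization strips that power off exactly, leaving precisely the defining polynomial $G_{B}(\sigma(\epsilon_{1}))$ of $D_{B}(\widetilde{\Sigma})$; once this is in hand, the rest is routine gluing and the elementary sorting observation.
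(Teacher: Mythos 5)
Your proposal is correct and follows essentially the same route as the paper: assertion 1 from the explicit chart description of $\mathrm{bl}_{\sigma(\epsilon_{i})}$, assertion 2 from combining this with the identification of $D(\widetilde{\Sigma},\epsilon)$ and $D(\widetilde{\Sigma},\epsilon_{\ex})$ as strict transform and exceptional divisor, and assertion 3 from the identity $b_{\epsilon_{\ex}i}=b_{\epsilon_{1}i}+b_{\epsilon_{2}i}$. The paper's own proof is a terse appeal to the construction and to the preceding proposition; your version merely makes explicit the exponent bookkeeping (the constant $c=m_{\epsilon_{\ex}}-m_{\epsilon_{1}}-m_{\epsilon_{2}}\geq 0$ and the rule $t_{\sigma,\epsilon_{2}}\mapsto t_{\sigma(\epsilon_{1}),\epsilon_{\ex}}$) that the paper leaves implicit, and these details check out.
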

\begin{proof}
	The first assertion follows from the construction of the blow-up $\trv(\widetilde{\Sigma})\rightarrow\trv(\Sigma)$.
	
	Next, we note that $\infty_{\trv(\widetilde{\Sigma})}$ is the total transform corresponding to $\prs^{n}_{\trv(\widetilde{\Sigma})}\rightarrow\prs^{n}_{\trv(\Sigma)}$. Then, the second assertion follows from 3.2.2.
	
	Finally, for $1\leq i\leq n$, we note that $b_{\epsilon_{\ex} i}=b_{\epsilon_{1}i}+b_{\epsilon_{2}i}$. Then, the third assertion follows from 3.2.2.
\end{proof}

\subsection{Resolutions of singularities}\label{resolution}

We prove the existence of the singularities of the complement of the lisse locus of an $\ell$-adic Kummer-type sheaf. We first deal with the $n=2$ case and then extend the result to the general case by induction.

Let $\Sigma$ be a generable subset of cones in $\mathbb{Z}^{d}$, and let $B=[b_{ij}]_{1\leq i\leq d;1\leq j\leq n}$ be a matrix whose entries are integers. We denote by $\Sigma_{\bad}(B)$ the set
\begin{equation*}
	\Sigma_{\bad}(B)=\{\sigma\in\Sigma\mid\text{$B$ is not $\sigma$-good}\}
\end{equation*}
and note that it is generable since it is a subset of $\Sigma$. When $n=2$, we define
\begin{equation*}
\begin{aligned}
	\mu_{B}(\Sigma)&=\begin{cases}
		0&\Sigma_{\bad}(B)=\varnothing\\
		\max\{|b_{\epsilon 1}-b_{\epsilon 2}|\mid\epsilon\in\edge(\Sigma_{\bad}(B))\}&\Sigma_{\bad}(B)\neq\varnothing
	\end{cases},\\
	\edge_{B}(\Sigma)&=\begin{cases}
		\varnothing&\Sigma_{\bad}(B)=\varnothing\\
		\{\epsilon\in\edge(\Sigma_{\bad}(B))\mid|b_{\epsilon 1}-b_{\epsilon 2}|=\mu_{B}(\Sigma)\}&\Sigma_{\bad}(B)\neq\varnothing
	\end{cases}.
\end{aligned}
\end{equation*}
Intuitively $\mu_{B}(\Sigma)$ is the maximal multiplicity of singularities of $\overline{D}_{B}(\Sigma)$ and the worst singularities are located at $\edge_{B}(\Sigma)$. For $\epsilon\in\edge_{B}(\Sigma)$, if $\Sigma_{\bad}(\Sigma)\neq\varnothing$, then we denote by
\begin{equation*}
	\edge_{B}(\Sigma,\epsilon)=\{\epsilon'\in\edge(\Sigma_{\bad}(B))\cap\edge(\Sigma(\epsilon))\mid(b_{\epsilon 1}-b_{\epsilon 2})(b_{\epsilon' 1}-b_{\epsilon' 2})<0\}
\end{equation*}
which is a non-empty subset of $\edge(\Sigma_{\bad}(B))$. Then, we define
\begin{equation*}
	\nu_{B}(\Sigma)=\begin{cases}
		0&\Sigma_{\bad}(B)=\varnothing\\
		\sum_{\epsilon\in\edge_{B}(\Sigma)}\#\edge_{B}(\Sigma,\epsilon)&\Sigma_{\bad}(B)\neq\varnothing
	\end{cases}.
\end{equation*}
To observe $\edge_{B}(\Sigma,\epsilon_{0})$ more closely for some $\epsilon_{0}\in\edge_{B}(\Sigma)$, we look at \Cref{resolution-figure-edges}. The gray area stands for the subset of $\edge(\Sigma_{\bad}(B))$ consisting of $\epsilon\in\edge(\Sigma_{\bad}(B))$ satisfying
\begin{equation*}
	(b_{\epsilon 1}-b_{\epsilon 2})(b_{\epsilon_{0}1}-b_{\epsilon_{0}2})\geq 0,
\end{equation*}
and the black area stands for $\edge_{B}(\Sigma,\epsilon_{0})$.

\begin{figure}[H]
\centering
\begin{tikzpicture}
	\filldraw[lightgray](1.5,0.75)rectangle(7.75,2.75);
	\filldraw[lightgray](7.75,1.75)circle(1);
	\filldraw[white](2,1)rectangle(7.75,2);
	\filldraw(6,1)rectangle(7.75,2);
	\filldraw(6,3.5)arc[start angle=90,end angle=270,x radius=1.5,y radius=1.5];
	\filldraw(7.75,1)arc[start angle=-90,end angle=90,x radius=0.5,y radius=0.5];
	\filldraw[white](0,0)rectangle(8,0.75);
	\filldraw[white](0,2.75)rectangle(8,4);
	\filldraw[lightgray](1.5,0.75)rectangle(7.75,1);
	\filldraw[lightgray](1.5,2)rectangle(7.75,2.75);
	\filldraw(6.25,2.5)circle(1pt);
	\draw(6,0.5)--(8,0.5);
	\draw(8,3.5)--(6,3.5);
	\draw(7.75,2)--(2,2)--(2,1)--(7.75,1);
	\draw(7.75,2.75)--(1.5,2.75)--(1.5,0.75)--(7.75,0.75);
	\draw(8,3.75)--(8,4)--(0,4)--(0,0)--(8,0)--(8,0.25);
	\draw(8,0.25)arc[start angle=-90,end angle=90,x radius=1.75,y radius=1.75];
	\draw(6,3.5)arc[start angle=90,end angle=270,x radius=1.5,y radius=1.5];
	\draw(7.75,0.75)arc[start angle=-90,end angle=90,x radius=1,y radius=1];
	\draw(7.75,1)arc[start angle=-90,end angle=90,x radius=0.5,y radius=0.5];
	\draw(8,0.5)arc[start angle=-90,end angle=90,x radius=1.5,y radius=1.5];
	\draw(0,4)node[below right]{$\edge(\Sigma)$};
	\draw(1.5,2.75)node[below right]{$\edge(\Sigma_{\bad}(B))$};
	\draw(6.25,2.5)node[right]{$\epsilon_{0}$};
	\draw(8,3.5)node[below left]{$\edge(\Sigma(\epsilon_{0}))$};
	\draw(7.75,2)node[below left,white]{$\edge_{B}(\Sigma,\epsilon_{0})$};
\end{tikzpicture}
\caption{}
\label{resolution-figure-edges}
\end{figure}

\begin{dfn}
	Let $\Sigma$ be a generable set of cones in $\mathbb{Z}^{d}$, and let $B=[b_{ij}]_{1\leq i\leq d;1\leq j\leq 2}$ be a matrix whose entries are integers. We say that a standard blow-up $\widetilde{\Sigma}\rightarrow\Sigma$ along $\epsilon_{1},\epsilon_{2}\in\edge(\Sigma)$ is \textnormal{$B$-good} if the following conditions are satisfied.
	\begin{enumerate}
		\item $\epsilon_{1}\in\edge_{B}(\Sigma)$ as well as $\epsilon_{2}\in\edge_{B}(\Sigma,\epsilon_{1})$.
		\item $|b_{\epsilon_{2}1}-b_{\epsilon_{2}2}|=\max\{|b_{\epsilon 1}-b_{\epsilon 2}|\mid\epsilon\in\edge_{B}(\Sigma,\epsilon_{1})\}$.
	\end{enumerate}
\end{dfn}

\begin{prp}
	Let $\Sigma$ be a generable set of cones in $\mathbb{Z}^{d}$, and let $B=[b_{ij}]_{1\leq i\leq d;1\leq j\leq 2}$ be a matrix whose entries are integers. If $\widetilde{\Sigma}\rightarrow\Sigma$ is a $B$-good blow-up, then $\mu_{B}(\widetilde{\Sigma})\leq\mu_{B}(\Sigma)$. Furthermore, if $\mu_{B}(\widetilde{\Sigma})=\mu_{B}(\Sigma)$, then $\nu_{B}(\widetilde{\Sigma})<\nu_{B}(\Sigma)$
\end{prp}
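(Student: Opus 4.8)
Throughout write $\delta_{\epsilon}=b_{\epsilon 1}-b_{\epsilon 2}$, so that, since $n=2$, the matrix $B$ is $\sigma$-good exactly when the integers $\delta_{\epsilon}$ for $\epsilon\in\edge(\sigma)$ do not take both a strictly positive and a strictly negative value; equivalently $\sigma\in\Sigma_{\bad}(B)$ iff $\sigma$ has two edges $\epsilon,\epsilon'$ with $\delta_{\epsilon}\delta_{\epsilon'}<0$. The plan is to read everything off from this single integer attached to each edge. If $\Sigma_{\bad}(B)=\varnothing$ there is no $B$-good blow-up (condition 1 cannot hold), so assume $\Sigma_{\bad}(B)\neq\varnothing$ and set $\mu=\mu_{B}(\Sigma)\geq 1$. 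The hypotheses $\epsilon_{1}\in\edge_{B}(\Sigma)$ and $\epsilon_{2}\in\edge_{B}(\Sigma,\epsilon_{1})$ give $|\delta_{\epsilon_{1}}|=\mu$, $0<|\delta_{\epsilon_{2}}|\leq\mu$ and $\delta_{\epsilon_{1}}\delta_{\epsilon_{2}}<0$. Using the relation $b_{\epsilon_{\ex}i}=b_{\epsilon_{1}i}+b_{\epsilon_{2}i}$ established in \Cref{blowup} we get $\delta_{\epsilon_{\ex}}=\delta_{\epsilon_{1}}+\delta_{\epsilon_{2}}$, and since the summands have opposite signs, $|\delta_{\epsilon_{\ex}}|=\mu-|\delta_{\epsilon_{2}}|<\mu$.

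For the first inequality I would show that no cone in $\widetilde{\Sigma}_{\bad}(B)$ has an edge of multiplicity exceeding $\mu$. By \Cref{blowup} we have $\edge(\widetilde{\Sigma})=\edge(\Sigma)\cup\{\epsilon_{\ex}\}$, and the multiplicity of each old edge is unchanged. Every new cone is some $\sigma(\epsilon_{i})$ with $\sigma\in\Sigma(\epsilon_{1})\cap\Sigma(\epsilon_{2})$; such $\sigma$ contains the opposite-sign pair $\epsilon_{1},\epsilon_{2}$, hence lies in $\Sigma_{\bad}(B)$, so all of its old edges lie in $\edge(\Sigma_{\bad}(B))$ and have multiplicity $\leq\mu$, while $\edge(\sigma(\epsilon_{i}))=(\edge(\sigma)\cup\{\epsilon_{\ex}\})\setminus\{\epsilon_{j}\}$ only adds $\epsilon_{\ex}$, of multiplicity $<\mu$. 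The unchanged cones are bad in $\widetilde{\Sigma}$ iff bad in $\Sigma$. Thus every edge of a cone in $\widetilde{\Sigma}_{\bad}(B)$ has multiplicity $\leq\mu$, giving $\mu_{B}(\widetilde{\Sigma})\leq\mu=\mu_{B}(\Sigma)$.

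Now assume $\mu_{B}(\widetilde{\Sigma})=\mu$ and set up the count of $\nu_{B}$. Two structural facts drive it. First, $\edge_{B}(\widetilde{\Sigma})\subseteq\edge_{B}(\Sigma)$: an old edge lies in a bad cone of $\widetilde{\Sigma}$ only if it already lay in the bad cone $\sigma$ or in an unchanged bad cone, and $\epsilon_{\ex}$ is excluded since $|\delta_{\epsilon_{\ex}}|<\mu$. Second, two old edges share a cone in $\widetilde{\Sigma}$ only if they already did in $\Sigma$ (every cone of $\widetilde{\Sigma}$ has its old edges inside some $\edge(\sigma)$ or inside an unchanged cone), and the pair $\{\epsilon_{1},\epsilon_{2}\}$ is now separated, because the only cones that contained both were the $\sigma\in\Sigma(\epsilon_{1})\cap\Sigma(\epsilon_{2})$, which have been subdivided. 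Consequently every ordered pair counted by $\nu_{B}(\widetilde{\Sigma})$ that does not involve $\epsilon_{\ex}$ is already counted by $\nu_{B}(\Sigma)$, this assignment is injective, and it misses $(\epsilon_{1},\epsilon_{2})$, which is counted by $\nu_{B}(\Sigma)$ because $\epsilon_{1}\in\edge_{B}(\Sigma)$. Note also that $\epsilon_{\ex}$ can occur only as the second coordinate of a counted pair, never the first, since $\epsilon_{\ex}\notin\edge_{B}(\widetilde{\Sigma})$.

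It remains to control the pairs $(\epsilon,\epsilon_{\ex})$ contributing to $\nu_{B}(\widetilde{\Sigma})$, and this is where condition 2 of $B$-goodness enters; I expect this bookkeeping to be the main obstacle. If $|\delta_{\epsilon_{2}}|=\mu$, then $\delta_{\epsilon_{\ex}}=0$, so $\epsilon_{\ex}$ is never an opposite-sign partner and no pair $(\epsilon,\epsilon_{\ex})$ is counted; moreover both $(\epsilon_{1},\epsilon_{2})$ and $(\epsilon_{2},\epsilon_{1})$ are dropped (here also $\epsilon_{2}\in\edge_{B}(\Sigma)$), whence $\nu_{B}(\widetilde{\Sigma})\leq\nu_{B}(\Sigma)-2$. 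If instead $|\delta_{\epsilon_{2}}|<\mu$, then $\delta_{\epsilon_{\ex}}$ has the same nonzero sign as $\delta_{\epsilon_{1}}$; a counted pair $(\epsilon,\epsilon_{\ex})$ would force $\epsilon\in\edge_{B}(\widetilde{\Sigma})$ with $\delta_{\epsilon}$ of the opposite sign, hence $|\delta_{\epsilon}|=\mu$, together with $\epsilon$ sharing a cone with $\epsilon_{\ex}$, i.e. $\epsilon\in\edge(\sigma)$ for some $\sigma\in\Sigma(\epsilon_{1})\cap\Sigma(\epsilon_{2})$. But then $\epsilon$ is a bad, opposite-sign neighbour of $\epsilon_{1}$ with $|\delta_{\epsilon}|=\mu$, so $\epsilon\in\edge_{B}(\Sigma,\epsilon_{1})$, and condition 2 forces $|\delta_{\epsilon_{2}}|\geq|\delta_{\epsilon}|=\mu$, a contradiction. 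Hence no pair involving $\epsilon_{\ex}$ is counted, only $(\epsilon_{1},\epsilon_{2})$ is dropped, and $\nu_{B}(\widetilde{\Sigma})\leq\nu_{B}(\Sigma)-1$. In either case $\nu_{B}(\widetilde{\Sigma})<\nu_{B}(\Sigma)$, which completes the plan.
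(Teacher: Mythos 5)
Your proof is correct and follows essentially the same route as the paper: compute $\delta_{\epsilon_{\ex}}=\delta_{\epsilon_{1}}+\delta_{\epsilon_{2}}$ to see $|\delta_{\epsilon_{\ex}}|<\mu_{B}(\Sigma)$, observe that the old edges of the new cones $\sigma(\epsilon_{i})$ already lay in $\edge(\Sigma_{\bad}(B))$ to get $\mu_{B}(\widetilde{\Sigma})\leq\mu_{B}(\Sigma)$, and then split into the cases $|\delta_{\epsilon_{2}}|=\mu$ and $|\delta_{\epsilon_{2}}|<\mu$ (using condition 2 of $B$-goodness in the latter) to show $\epsilon_{\ex}$ never enters any $\edge_{B}(\widetilde{\Sigma},\epsilon)$ while the pair $(\epsilon_{1},\epsilon_{2})$, and also $(\epsilon_{2},\epsilon_{1})$ in the first case, is lost. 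Your explicit injectivity bookkeeping of ordered pairs is a slightly more careful packaging of the same counting the paper performs.
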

\begin{proof}
	Assume that $\widetilde{\Sigma}\rightarrow\Sigma$ is the $B$-good blow-up along $\epsilon_{1}\in\edge_{B}(\Sigma)$ and $\epsilon_{2}\in\edge_{B}(\Sigma,\epsilon_{1})$, then
	\begin{equation*}
		|b_{\epsilon_{2}1}-b_{\epsilon_{2}2}|=\max\{|b_{\epsilon 1}-b_{\epsilon 2}|\mid\epsilon\in\edge_{B}(\Sigma,\epsilon_{1})\}.
	\end{equation*}
	Let $\epsilon_{\ex}$ be the exceptional edge of $\widetilde{\Sigma}\rightarrow\Sigma$, then
	\begin{equation*}
		|b_{\epsilon_{\ex}1}-b_{\epsilon_{\ex}2}|=||b_{\epsilon_{1}1}-b_{\epsilon_{1}2}|-|b_{\epsilon_{2}1}-b_{\epsilon_{2}}2||<|b_{\epsilon_{1}1}-b_{\epsilon_{1}2}|,
	\end{equation*}
	which implies that $\epsilon_{\ex}\notin\edge_{B}(\widetilde{\Sigma})$. On the other hand, we note that
	\begin{equation*}
		\widetilde{\Sigma}_{\bad}(B)\subset\{\sigma(\epsilon_{1}),\sigma(\epsilon_{2})\mid\sigma\in\Sigma_{\bad}(B)\cap\Sigma(\epsilon_{1})\cap\Sigma(\epsilon_{2})\}\cup(\Sigma_{\bad}(B)\setminus(\Sigma(\epsilon_{1})\cup\Sigma(\epsilon_{2}))),
	\end{equation*}
	which implies that $\edge(\widetilde{\Sigma}_{\bad}(B))\subset\edge(\Sigma_{\bad}(B))$, and hence $\mu_{B}(\widetilde{\Sigma})\leq\mu_{B}(\Sigma)$. Next, we assume that $\mu_{B}(\widetilde{\Sigma})=\mu_{B}(\Sigma)$ and prove that $\nu_{B}(\widetilde{\Sigma})<\nu_{B}(\Sigma)$. As a consequence of the discussion above, we note that $\edge_{B}(\widetilde{\Sigma})\subset\edge_{B}(\Sigma)$. For $\epsilon\in\edge_{B}(\widetilde{\Sigma})\setminus\edge_{B}(\Sigma)$, we note that $\edge_{B}(\widetilde{\Sigma},\epsilon)=\edge_{B}(\Sigma,\epsilon)$.
	
	If $\epsilon_{2}\in\edge_{B}(\Sigma)$, then $b_{\epsilon_{\ex}1}=b_{\epsilon_{1}1}+b_{\epsilon_{2}1}=b_{\epsilon_{1}2}+b_{\epsilon_{2}2}=b_{\epsilon_{\ex}2}$, which implies that $\epsilon_{\ex}\notin\edge_{B}(\widetilde{\Sigma},\epsilon)$ for any $\epsilon\in\edge_{B}(\widetilde{\Sigma})$, and hence $\edge_{B}(\widetilde{\Sigma},\epsilon)\subset\edge_{B}(\Sigma,\epsilon)$. Particularly, we note that
	\begin{equation*}
		\edge_{B}(\widetilde{\Sigma},\epsilon_{1})\subset\edge_{B}(\Sigma,\epsilon_{1})\setminus\{\epsilon_{2}\},\ \edge_{B}(\widetilde{\Sigma},\epsilon_{2})\subset\edge_{B}(\Sigma,\epsilon_{2})\setminus\{\epsilon_{1}\},
	\end{equation*}
	which implies that $\nu_{B}(\widetilde{\Sigma})\leq\nu_{B}(\Sigma)-2<\nu_{B}(\Sigma)$.
	
	If $\epsilon_{2}\notin\edge_{B}(\Sigma)$, then $\edge_{B}(\Sigma,\epsilon_{1})\cap\edge_{B}(\Sigma)=\varnothing$. Therefore, for any $\epsilon\in\edge_{B}(\widetilde{\Sigma})\cap\edge(\Sigma(\epsilon_{1}))$, we have $(b_{\epsilon 1}-b_{\epsilon 2})(b_{\epsilon_{1}1}-b_{\epsilon_{1}2})>0$. On the other hand, the fact that $|b_{\epsilon_{1}1}-b_{\epsilon_{1}2}|>|b_{\epsilon_{2}1}-b_{\epsilon_{2}2}|$ implies that $(b_{\epsilon_{1}1}-b_{\epsilon_{1}2})(b_{\epsilon_{\ex}1}-b_{\epsilon_{\ex}2})>0$, and hence $(b_{\epsilon 1}-b_{\epsilon 2})(b_{\epsilon_{\ex}1}-b_{\epsilon_{\ex}2})>0$. This shows that $\epsilon_{\ex}\notin\edge_{B}(\widetilde{\Sigma},\epsilon)$, and hence $\edge_{B}(\widetilde{\Sigma},\epsilon)\subset\edge_{B}(\Sigma,\epsilon)$. Particularly, we note that
	\begin{equation*}
		\edge_{B}(\widetilde{\Sigma},\epsilon_{1})\subset\edge_{B}(\Sigma,\epsilon_{1})\setminus\{\epsilon_{2}\},
	\end{equation*}
	which implies that $\nu_{B}(\widetilde{\Sigma})\leq\nu_{B}(\Sigma)-1<\nu_{B}(\Sigma)$.
\end{proof}

For two generable sets $\Sigma,\widetilde{\Sigma}$ of cones in $\mathbb{Z}^{d}$, if there exist $\widetilde{\Sigma}^{(0)},\dots,\widetilde{\Sigma}^{(r)}$ where $\Sigma=\widetilde\Sigma^{(0)}$ and $\widetilde{\Sigma}=\widetilde{\Sigma}^{(r)}$, such that $\widetilde{\Sigma}^{(i)}\rightarrow\widetilde{\Sigma}^{(i-1)}$ is a standard blow-up for every $i\in\{1,\dots,r\}$, then we write
\begin{equation*}
	\widetilde{\Sigma}=\widetilde{\Sigma}^{(r)}\rightarrow\dots\rightarrow\widetilde{\Sigma}^{(0)}=\Sigma
\end{equation*}
or simply $\widetilde{\Sigma}\rightsquigarrow\Sigma$, and call it a \textit{series} of standard blow-up. If moreover $\widetilde{\Sigma}^{(i)}\rightarrow\widetilde{\Sigma}^{(i-1)}$ is $B$-good for every $i\in\{1,\dots,r\}$, then we say that $\widetilde{\Sigma}\rightsquigarrow\Sigma$ is a \textit{series} of $B$-good blow-ups.

\begin{cor}
	Let $\Sigma$ be a generable set of cones in $\mathbb{Z}^{d}$, and let $B=[b_{ij}]_{1\leq i\leq d;q\leq j\leq 2}$ be a matrix whose entries are integers. If $B$ is not $\Sigma$-good, then the following statements are true.
	\begin{enumerate}
		\item There exists a series of $B$-good blow-ups $\widetilde{\Sigma}\rightsquigarrow\Sigma$, such that $\mu_{B}(\widetilde{\Sigma})<\mu_{B}(\Sigma)$.
		\item There exists a series of $B$-good blow-ups $\widetilde{\Sigma}\rightsquigarrow\Sigma$, such that $B$ is $\widetilde{\Sigma}$-good.
	\end{enumerate}
\end{cor}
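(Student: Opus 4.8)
The plan is to derive both statements from the preceding proposition by a well-founded descent on the pair $(\mu_B(\Sigma),\nu_B(\Sigma))$, ordered lexicographically. That proposition already does the analytic work: a single $B$-good blow-up never increases $\mu_B$, and whenever it preserves $\mu_B$ it strictly decreases $\nu_B$. So the entire task reduces to two bookkeeping points, namely that a $B$-good blow-up is always available while $B$ fails to be good, and that the lexicographic order on $(\mu_B,\nu_B)$ is well-founded.

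For statement 1, I would first record that a $B$-good blow-up exists whenever $B$ is not $\Sigma$-good. Indeed, $\Sigma_{\bad}(B)\neq\varnothing$ gives $\edge_{B}(\Sigma)\neq\varnothing$, and for any $\epsilon_{1}\in\edge_{B}(\Sigma)$ the set $\edge_{B}(\Sigma,\epsilon_{1})$ is non-empty by its description above (for $n=2$ every bad cone on which $\epsilon_1$ lies carries a second edge $\epsilon'$ with $(b_{\epsilon_{1}1}-b_{\epsilon_{1}2})(b_{\epsilon'1}-b_{\epsilon'2})<0$); choosing $\epsilon_{2}\in\edge_{B}(\Sigma,\epsilon_{1})$ with $|b_{\epsilon_{2}1}-b_{\epsilon_{2}2}|$ maximal yields a blow-up meeting both conditions of $B$-goodness. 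I would then form a series $\Sigma=\Sigma^{(0)}\rightarrow\Sigma^{(1)}\rightarrow\cdots$ of $B$-good blow-ups, continuing as long as $\mu_{B}(\Sigma^{(i)})=\mu_{B}(\Sigma)$. By the proposition each such step strictly decreases $\nu_{B}$. Since $\mu_{B}(\Sigma^{(i)})=\mu_{B}(\Sigma)>0$ forces $\Sigma^{(i)}_{\bad}(B)\neq\varnothing$, and hence $\nu_{B}(\Sigma^{(i)})\geq 1$, the values $\nu_{B}(\Sigma^{(i)})$ form a strictly decreasing sequence of positive integers and must be finite. Thus the construction halts at some $\widetilde{\Sigma}=\Sigma^{(r)}$ with $\mu_{B}(\widetilde{\Sigma})<\mu_{B}(\Sigma)$, proving statement 1.

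Statement 2 then follows by iterating statement 1 and concatenating the resulting series of $B$-good blow-ups: because $\mu_{B}$ takes values in the non-negative integers, finitely many rounds drive it to $0$, and $\mu_{B}(\widetilde{\Sigma})=0$ means by definition $\widetilde{\Sigma}_{\bad}(B)=\varnothing$, i.e. $B$ is $\widetilde{\Sigma}$-good.

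Structurally this is a double induction, an outer descent on $\mu_{B}$ and an inner descent on $\nu_{B}$, so the genuine content rests in the preceding proposition. The one place where something could go wrong, and hence the main point to verify, is the availability of a $B$-good blow-up at every intermediate stage; this hinges on the non-emptiness of $\edge_{B}(\Sigma^{(i)},\epsilon_{1})$, which in turn uses that for $n=2$ each bad cone carries two edges whose differences $b_{\epsilon 1}-b_{\epsilon 2}$ have opposite signs. Everything else is routine termination bookkeeping rather than a substantive obstacle.
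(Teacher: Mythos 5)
Your proposal is correct and follows essentially the same route as the paper: both arguments rest on the preceding proposition, verify that a $B$-good blow-up is available whenever $B$ is not $\Sigma$-good (via the non-emptiness of $\edge_{B}(\Sigma)$ and of $\edge_{B}(\Sigma,\epsilon_{1})$), and then terminate by a descent on $\nu_{B}$ for the first statement and on $\mu_{B}$ for the second, which the paper phrases as a pair of inductions. The point you flag as the main thing to verify is exactly the point the paper spells out in detail.
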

\begin{proof}
	To prove the first assertion, we argue by induction on $\nu_{B}(\Sigma)$. First, we assume that $\nu_{B}(\Sigma)=1$. Let $\widetilde{\Sigma}\rightarrow\Sigma$ be a $B$-good blow-up. In fact, such blow-up exists. To explain this, we first fix an arbitrary $\epsilon_1\in\edge_{B}(\Sigma)$. This can always be done, namely, $\edge_{B}(\Sigma)\neq\varnothing$, because by definition of $\edge_{B}(\Sigma)$, it is empty if and only if $\Sigma_{\bad}(B)=\varnothing$, in which case we have no singularities and we are done. In other words, as long as there are singularities to resolve, we have $\edge_{B}(\Sigma)\neq\varnothing$. Next, with $\epsilon_1\in\edge_{B}(\Sigma)$ fixed, we choose $\epsilon_2$ from $\edge_{B}(\Sigma,\epsilon_{1})$ to be the one satisfying
		\begin{equation*}
			|b_{\epsilon_{2}1}-b_{\epsilon_{2}2}|=\max\{|b_{\epsilon 1}-b_{\epsilon 2}|\mid\epsilon\in\edge_{B}(\Sigma,\epsilon_{1})\}.
		\end{equation*}
		This also can always be done, since $\edge_{B}(\Sigma,\epsilon_{1})\neq\varnothing$ and $\#\edge_{B}(\Sigma,\epsilon_{1})<\infty$. To be more precise, we know that $\#\edge_{B}(\Sigma,\epsilon_{1})=1$ and $\epsilon_2$ is the only element of $\edge_{B}(\Sigma,\epsilon_{1})$. The first claim is ensured by the definition of $\edge_{B}(\Sigma,\epsilon_{1})$. In other words, $\edge_{B}(\Sigma,\epsilon_{1})\neq\varnothing$ as long as $\epsilon_{1}\in\edge_B(\Sigma)$. The second claim is ensured by the finiteness of the number of edges of $\Sigma$, and $\edge_{B}(\Sigma,\epsilon_{1})$ is a subset of the set of all edges of $\Sigma$.
		
		If $\mu_{B}(\widetilde{\Sigma})=\mu_{B}(\Sigma)$, then $\nu_{B}(\widetilde{\Sigma})<\nu_{B}(\Sigma)$, which implies that that $\nu_{B}(\widetilde{\Sigma})=0$ and leads to contradiction. Therefore, we have $\mu_{B}(\widetilde{\Sigma})<\mu_{B}(\Sigma)$. Next, assume that the argument is true when $1\leq\nu_{B}(\Sigma)\leq\nu$, we prove that it is also true when $\nu_{B}(\Sigma)=\nu+1$. Let $\widetilde{\Sigma}^{(1)}\rightarrow\Sigma$ be a $B$-good blow-up. If $\mu_{B}(\widetilde{\Sigma}^{(1)})=\mu_{B}(\Sigma)$, then $\nu_{B}(\widetilde{\Sigma}^{(1)})<\nu_{B}(\Sigma)$. By the induction hypothesis, there exists a series of $B$-good blow-ups $\widetilde{\Sigma}\rightsquigarrow\widetilde{\Sigma}^{(1)}$, such that $\mu_{B}(\widetilde{\Sigma})<\mu_{B}(\widetilde{\Sigma}^{(1)})=\mu_{B}(\Sigma)$, which means that $\widetilde{\Sigma}\rightsquigarrow\Sigma$ is the required series of $B$-good blow-ups.
	
	To prove the second assertion, we argue by induction on $\mu_{B}(\Sigma)$. First, assume that $\mu_{B}(\Sigma)=1$. Let $\widetilde{\Sigma}\rightsquigarrow\Sigma$ be the series of $B$-good blow-ups such that $\mu_{B}(\widetilde{\Sigma})<\mu_{B}(\Sigma)$, which means that $\mu_{B}(\widetilde{\Sigma})=0$, and hence $B$ is $\widetilde{\Sigma}$-good. Next, assume that the argument is true for $1\leq\mu_{B}(\Sigma)\leq\mu$, we prove that it is also true when $\mu_{B}(\Sigma)=\mu+1$. Let $\widetilde{\Sigma}^{(1)}\rightsquigarrow\Sigma$ be the series of $B$-good blow-ups such that $\mu_{B}(\widetilde{\Sigma}^{(1)})<\mu_{B}(\Sigma)$. By the induction hypothesis, there exists a series of $B$-good blow-ups $\widetilde{\Sigma}\rightsquigarrow\widetilde{\Sigma}^{(1)}$, such that $B$ is $\widetilde{\Sigma}$-good, which means that $\widetilde{\Sigma}\rightsquigarrow\Sigma$ is the required series of $B$-good blow-ups.
\end{proof}

Next, we consider the case where $n\in\mathbb{Z}_{\geq 2}$.

\begin{thm}\label{resolution-existence}
	Let $\Sigma$ be a generable set of cones in $\mathbb{Z}^{d}$, and let $B=[b_{ij}]_{1\leq i\leq d;1\leq j\leq n}$. If $B$ is not $\Sigma$-good, then there exists a series of standard blow-ups $\widetilde{\Sigma}\rightsquigarrow\Sigma$, such that $B$ is $\widetilde{\Sigma}$-good.
\end{thm}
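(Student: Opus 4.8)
The plan is to reduce the general statement to the two-column case already settled in the preceding corollary. For $1\le j<k\le n$, write $B[j,k]$ for the $d\times 2$ submatrix of $B$ consisting of its $j$-th and $k$-th columns; since $b_{\epsilon i}=\sum_{l=1}^{d}v_{\epsilon l}b_{il}$ depends only on the $i$-th column, the two integers attached to $B[j,k]$ at an edge $\epsilon$ are exactly $b_{\epsilon j}$ and $b_{\epsilon k}$. The first step is the reduction lemma: for a fixed cone $\sigma$, the matrix $B$ is $\sigma$-good if and only if $B[j,k]$ is $\sigma$-good for every pair $j<k$, whence $B$ is $\Sigma$-good if and only if every $B[j,k]$ is $\Sigma$-good. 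The forward direction is immediate, since a permutation $\perm$ witnessing $\sigma$-goodness of $B$ also sorts each pair. For the converse, fix $\sigma$ and define on $\{1,\dots,n\}$ the relation $j\preceq k$ meaning $b_{\epsilon j}\le b_{\epsilon k}$ for all $\epsilon\in\edge(\sigma)$. This relation is reflexive and transitive, and pairwise $\sigma$-goodness says precisely that $j\preceq k$ or $k\preceq j$ holds for every pair, so $\preceq$ is a total preorder; any linear extension yields a permutation $\perm$ with $b_{\epsilon\perm(1)}\le\dots\le b_{\epsilon\perm(n)}$ for all $\epsilon\in\edge(\sigma)$, i.e. $B$ is $\sigma$-good.

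The second step is to observe that a standard blow-up never destroys the goodness of a pair. Let $\widetilde{\Sigma}\to\Sigma$ be the standard blow-up along $\epsilon_{1},\epsilon_{2}$ with exceptional edge $\epsilon_{\ex}$, and suppose $B[j,k]$ is $\Sigma$-good. Each cone of $\widetilde{\Sigma}$ is either an unchanged cone of $\Sigma$, for which pairwise goodness is inherited, or a cone $\sigma(\epsilon_{i})$ coming from some $\sigma\in\Sigma(\epsilon_{1})\cap\Sigma(\epsilon_{2})$, with edge set $(\edge(\sigma)\cup\{\epsilon_{\ex}\})\setminus\{\epsilon_{3-i}\}$. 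Since $\epsilon_{1},\epsilon_{2}\in\edge(\sigma)$ and $B[j,k]$ is $\sigma$-good, the differences $b_{\epsilon_{1}j}-b_{\epsilon_{1}k}$ and $b_{\epsilon_{2}j}-b_{\epsilon_{2}k}$ share a common weak sign, and because $\epsilon_{\ex}$ is generated by $v_{\epsilon_{1}}+v_{\epsilon_{2}}$ and $b_{\epsilon i}$ is linear in $v_{\epsilon}$ we have $b_{\epsilon_{\ex}j}-b_{\epsilon_{\ex}k}=(b_{\epsilon_{1}j}-b_{\epsilon_{1}k})+(b_{\epsilon_{2}j}-b_{\epsilon_{2}k})$, which has that same sign. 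Hence the order of the columns $j,k$ stays consistent over $\edge(\sigma(\epsilon_{i}))$, so $B[j,k]$ is $\widetilde{\Sigma}$-good.

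Finally I would iterate over pairs. If $B$ is not $\Sigma$-good, the reduction lemma supplies a pair $j<k$ with $B[j,k]$ not $\Sigma$-good; applying the two-column corollary to $B[j,k]$ produces a series of standard blow-ups $\Sigma'\rightsquigarrow\Sigma$ making $B[j,k]$ into a $\Sigma'$-good pair. By the second step every pair good for $\Sigma$ remains good for $\Sigma'$, while $B[j,k]$ has now become good, so the number of good pairs strictly increases. Since there are only $\binom{n}{2}$ pairs, repeating this finitely often and concatenating the series yields a single series of standard blow-ups $\widetilde{\Sigma}\rightsquigarrow\Sigma$ for which every $B[j,k]$ is $\widetilde{\Sigma}$-good; the reduction lemma then gives that $B$ is $\widetilde{\Sigma}$-good. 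The main obstacle is the interference problem—guaranteeing that resolving one pair does not reintroduce the singularities of another—and this is exactly what the additivity $b_{\epsilon_{\ex}i}=b_{\epsilon_{1}i}+b_{\epsilon_{2}i}$ in the second step resolves; with that in hand, the remainder is bookkeeping over the finitely many pairs.
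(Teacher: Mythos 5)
Your proposal is correct and follows essentially the same route as the paper: reduce to the two-column corollary, observe that standard blow-ups never create new bad pairs (via $b_{\epsilon_{\ex}i}=b_{\epsilon_{1}i}+b_{\epsilon_{2}i}$), and induct on the number of bad pairs. You are in fact somewhat more explicit than the paper on two points it leaves implicit --- the total-preorder argument showing that pairwise $\sigma$-goodness implies $\sigma$-goodness, and the sign computation at the exceptional edge --- but these are details of the same argument, not a different one.
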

\begin{proof}
	For any $1\leq i<j\leq n$, we denote by $B[i;j]$ the matrix consisting of the columns of $B$ whose indices are $i,j$. Then, we denote by
	\begin{equation*}
		\badpair_{B}(\Sigma)=\{(i,j)\mid 1\leq i<j\leq n;\Sigma_{\bad}(B[i;j])\neq\varnothing\}.
	\end{equation*}
	For any standard blow-up $\widetilde{\Sigma}\rightarrow\Sigma$, we note that $\badpair_{B}(\widetilde{\Sigma})\subset\badpair_{B}(\Sigma)$. We argue by induction on $\#\badpair_{B}(\Sigma)$. First, assume that $\badpair=\{(i,j)\}$. Let $\widetilde{\Sigma}\rightsquigarrow\Sigma$ be the series of $B[i;j]$-good blow-ups such that $B[i;j]$ is $\widetilde{\Sigma}$-good. Then, we have
	\begin{equation*}
		\badpair_{B}(\widetilde{\Sigma})\subset\badpair_{B}(\Sigma)\setminus\{i,j\},
	\end{equation*}
	and hence $\badpair_{B}(\widetilde{\Sigma})=\varnothing$. Next, assume that the argument is true when $1\leq\#\badpair_{B}(\Sigma)\leq b$, we prove that it is also true when $\#\badpair_{B}(\Sigma)=b+1$. For $(i,j)\in\badpair_{B}(\Sigma)$, there exists a series of $B$-good blow-ups $\widetilde{\Sigma}^{(1)}\rightsquigarrow\Sigma$, such that $\badpair_{B}(\widetilde{\Sigma}^{(1)})\subset\badpair_{B}(\Sigma)\setminus\{i,j\}$. By the induction hypothesis, there exists a series of standard blow-ups $\widetilde{\Sigma}\rightsquigarrow\widetilde{\Sigma}^{(1)}$, such that $B$ is $\widetilde{\Sigma}$-good, which implies that $\widetilde{\Sigma}\rightsquigarrow\Sigma$ is the required series of standard blow-ups.
\end{proof}

\section{Characteristic cycles of \texorpdfstring{$\ell$}{l}-adic sheaves}\label{characteristic}
We prove that the characteristic cycle of an $\ell$-adic GKZ-type sheaf is equal to the direct image of the characteristic cycle of the corresponding $\ell$-adic Kummer-type sheaf. We calculate the dimension of certain cycles in \Cref{dimension} and prove that the union of the cycles are the direct image of the singular support of the $\ell$-adic Kummer sheaf in Subsection 4.2, which we use to verify that the requirements for applying the formula \cref{introduction-direct-image-formula-cycle} calculating the characteristic cycle of the direct image of an $\ell$-adic sheaf are met. We apply the formula to our case in \Cref{square}.
\subsection{Dimension of closed conical subsets}\label{dimension}

We define some closed conical subsets of the cotangent bundle of the projective space and calculate their dimensions. Here, a closed subset is called \textit{conical} if it is stable under the action of the multiplicative group $\mathbb G_m$.

For $0\leq i\leq n$, we denote by $\afs^{n}_{i}=D_{+}(X_{i})$ where $D_{+}(X_{i})=\spec\Lambda\bigl[\frac{X_{j}}{X_{i}}\bigm|j\in\{0,\dots,n\}\setminus\{i\}\bigr]$ is an open subset of $\prs^{n}=\proj\Lambda[X_{0},\dots,X_{n}]$. Note that $\afs^{n}_{i}\times_{\prs^{n}}T^{*}\prs^{n}\cong T^{*}\afs^{n}_{i}$. Denote by
\begin{equation*}
	\coor(i)=\bigl\{\tfrac{X_{j}}{X_{i}},\xi\bigl(\tfrac{X_{j}}{X_{i}}\bigr)\bigm|j\in\{0,\dots,n\}\setminus\{i\}\bigr\}
\end{equation*}
which is a finite set. Then, we can write $T^{*}\afs^{n}_{i}=\spec\Lambda[\coor(i)]$. We note that $\bigl(\tfrac{X_j}{X_i}\bigr)_{j\in\{0,\dots,n\}\setminus\{i\}}$ is taken to be a local coordinate for $\afs^{n}_{i}$, and $\xi\bigl(\tfrac{X_j}{X_i}\bigr)$ stands for $d\tfrac{X_j}{X_i}$, where $\bigl(\tfrac{X_j}{X_i},d\tfrac{X_j}{X_i}\bigr)_{j\in\{0,\dots,n\}\setminus\{i\}}$ is the natural local coordinate of the cotangent bundle $T^{*}\afs^{n}_{i}$. For $i\neq j\in\{0,\dots,n\}$, we may compute $\xi\bigl(\frac{X_{j}}{X_{i}}\bigr)=-\frac{X_{j}}{X_{i}}\sum_{k\in\coor(i)}\frac{X_{k}}{X_{i}}\xi\bigl(\frac{X_{k}}{X_{i}}\bigr)$. By formally taking $i=j$, we put
\begin{equation*}
	\xi\bigl(\tfrac{X_{i}}{X_{i}}\bigr)=-\sum_{k\in\coor(i)}\tfrac{X_{k}}{X_{i}}\xi\bigl(\tfrac{X_{k}}{X_{i}}\bigr).
\end{equation*}
Let $\theta\subset\{1,\dots,n\}$ be a non-empty subset, and let $A=[a_{ij}]_{0\leq i\leq d;1\leq j\leq n}$ be a matrix whose entries are integers. We denote by $A[\theta]=[a_{ij}]_{0\leq i\leq d;j\in\theta}$. For every $0\leq i\leq n$, we define subsets of $\Lambda[\coor(i)]$ by
\begin{equation*}
\begin{aligned}
	\Xi_{i}(A,\theta)&=\bigl\{\xi\bigl(\tfrac{X_{j}}{X_{i}}\bigr)\bigm|j\in\{0,\dots,n\}\setminus(\theta\cup\{i\})\bigr\},\\
	\mathrm{L}_{i}(A,\theta)&=\biggl\{\sum_{j\in\theta}a_{kj}\tfrac{X_{j}}{X_{i}}\xi\bigl(\tfrac{X_{j}}{X_{i}}\bigr)\biggm|0\leq k\leq d\biggr\},\\
	\lbox_{i}(A,\theta)&=\biggl\{\prod_{w_{j}>0}\xi\bigl(\tfrac{X_{j}}{X_{i}}\bigr)^{w_{j}}-\prod_{w_{j}<0}\xi\bigl(\tfrac{X_{j}}{X_{i}}\bigr)^{-w_{j}}\biggm|\forall k\in\{0,\dots,d\},\sum_{j\in\theta}a_{kj}w_{j}=0\biggr\}.
\end{aligned}
\end{equation*}
For a subset $\varsigma\subset\Lambda[\coor(i)]$, we denote by $\langle\varsigma\rangle$ the ideal of $\Lambda[\coor(i)]$ generated by $\varsigma$. Let
\begin{equation*}
	\mathfrak{S}_{i}(A,\theta)=\bigl\langle\Xi_{i}(A,\theta)\cup\mathrm{L}_{i}(A,\theta)\cup\lbox_{i}(A,\theta)\bigr\rangle,
\end{equation*}
and let $S_{i}(A,\theta)$ be the closed subset of $T^{*}\afs^{n}_{i}$ defined by $\mathfrak{S}_{i}(A,\theta)$. We note that $S_{i}(A,\theta)$ is conical. Furthermore, for every $1\leq i\leq n$, we define a subset of $\Lambda[\coor(i)]$ by
\begin{equation*}
	\Xi^{\infty}_{i}(A,\theta)=\bigl(\Xi_{i}(A,\theta)\setminus\bigl\{\xi\bigl(\tfrac{X_{0}}{X_{i}}\bigr)\bigr\}\bigr)\cup\bigl\{\tfrac{X_{0}}{X_{i}}\bigr\}.
\end{equation*}
Let $\mathfrak{S}^{\infty}_{i}(A,\theta)=\bigl\langle\Xi^{\infty}_{i}(A,\theta)\cup\mathrm{L}_{i}(A,\theta)\cup\lbox_{i}(A,\theta)\bigr\rangle$, and let $S^{\infty}_{i}(A,\theta)$ be the closed subset of $T^{*}\afs^{n}_{i}$ defined by $\mathfrak{S}^{\infty}_{i}(A,\theta)$. We note that $S^{\infty}_{i}(A,\theta)$ is conical. For a matrix $M$ whose entries are integers, we denote by $M_{\Lambda}$ the image of $M$ by $\mathbb{Z}\rightarrow\Lambda$.

\begin{prp}\label{dimention-local-dimension}
	Let $A=[a_{ij}]_{0\leq i\leq d;1\leq j\leq n}$ be a non-confluent matrix, and let $\theta\subset\{1,\dots,n\}$ be a non-empty subset. Then, for $0\leq k\leq n$, the following statements are true.
	\begin{enumerate}
		\item Set $\Lambda=\rsf$, we have $n\leq\dim S_{k}(A,\theta)\leq n+\rank A[\theta]-\rank_{\rsf}A[\theta]_{\rsf}$.
		\item Set $\Lambda=\frf$, we have $\dim S_{k}(A,\theta)=n$.
	\end{enumerate}
	At the same time, for $1\leq k\leq n$, the following statements are true.
	\begin{enumerate}[start=3]
		\item Set $\Lambda=\rsf$, we have $n\leq\dim S^{\infty}_{k}(A,\theta)\leq n+\rank A[\theta]-\rank_{\rsf}A[\theta]_{\rsf}$.
		\item Set $\Lambda=\frf$, we have $\dim S^{\infty}_{k}(A,\theta)=n$.
	\end{enumerate}
\end{prp}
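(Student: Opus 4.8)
The plan is to analyze $S_{k}(A,\theta)$ through its projection to the fibre coordinates, exploiting the toric structure carried by the box relations. Write $\eta_{j}=\xi(\tfrac{X_{j}}{X_{k}})$ and $x_{j}=\tfrac{X_{j}}{X_{k}}$. After imposing $\Xi_{k}(A,\theta)$, which kills $\eta_{j}$ for $j\notin\theta\cup\{k\}$, the surviving fibre coordinates are indexed by $\theta$, the index $k$ being absorbed by the Euler relation $\xi(\tfrac{X_{k}}{X_{k}})=-\sum_{l}\tfrac{X_{l}}{X_{k}}\xi(\tfrac{X_{l}}{X_{k}})$ when $k\in\theta$; I would treat the case $k\notin\theta$ first and reduce the remaining case to it through this relation. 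The ideal generated by $\lbox_{k}(A,\theta)$ is the lattice ideal of the saturated lattice $L=\ker_{\mathbb{Z}}A[\theta]$, so its zero locus is the irreducible affine toric variety $V_{L}$ of dimension $\rank A[\theta]$, independent of $\Lambda$, whose torus orbits correspond to the faces of the cone generated by the columns of $A[\theta]$.

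First I would establish the lower bound. Non-confluence supplies an invertible $P$ with the first row of $PA$ equal to $(1,\dots,1)$; since $\ker_{\mathbb{Z}}A[\theta]=\ker_{\mathbb{Z}}(PA)[\theta]$, every $w\in L$ satisfies $\sum_{j}w_{j}=0$, hence has both positive and negative entries unless $w=0$. Therefore every generator of $\lbox_{k}(A,\theta)$ is a difference of two non-constant monomials and vanishes on the zero section $\{\eta_{j}=0\ \forall j\}$, where the linear generators $\mathrm{L}_{k}(A,\theta)$ and the generators $\Xi_{k}(A,\theta)$ vanish as well. Thus the zero section $\afs^{n}_{k}\cong\afs^{n}$ lies in $S_{k}(A,\theta)$, giving $\dim S_{k}(A,\theta)\ge n$; imposing $\tfrac{X_{0}}{X_{k}}=0$ and leaving $\xi(\tfrac{X_{0}}{X_{k}})$ free gives $\dim S^{\infty}_{k}(A,\theta)\ge n$ the same way.

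For the upper bound I would stratify $S_{k}(A,\theta)$ by the orbits of $V_{L}$. Over the orbit $O_{\phi}$ attached to a face with column-support $\phi\subseteq\theta$ one has $\dim O_{\phi}=\rank A[\phi]$ (this is robust in characteristic $p$, since the orbit is the image of a torus under a monomial map and Frobenius is surjective), and $\mathrm{L}_{k}(A,\theta)$ restricts to the linear system $\sum_{j\in\phi}a_{mj}x_{j}\eta_{j}=0$ for $0\le m\le d$, whose matrix is $A[\phi]$ with columns rescaled by the units $\eta_{j}$ and hence has rank $\rank_{\Lambda}A[\phi]_{\Lambda}$. The fibre in the $x$-coordinates therefore has dimension $n-\rank_{\Lambda}A[\phi]_{\Lambda}$, so the piece of $S_{k}(A,\theta)$ over $O_{\phi}$ has dimension $\rank A[\phi]+n-\rank_{\Lambda}A[\phi]_{\Lambda}$. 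When $\Lambda=\frf$ the two ranks coincide, every stratum has dimension exactly $n$, and with the lower bound this yields $\dim S_{k}(A,\theta)=n$; the $S^{\infty}_{k}$ computation is identical after interchanging $\xi(\tfrac{X_{0}}{X_{k}})$ with $\tfrac{X_{0}}{X_{k}}$. When $\Lambda=\rsf$ the full-support stratum $\phi=\theta$ contributes exactly $n+\rank A[\theta]-\rank_{\rsf}A[\theta]_{\rsf}$, the asserted bound.

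The hard part will be to show that no boundary stratum overshoots this value, that is, $\rank A[\phi]-\rank_{\rsf}A[\phi]_{\rsf}\le\rank A[\theta]-\rank_{\rsf}A[\theta]_{\rsf}$ for every face-support $\phi\subseteq\theta$. This is a genuine comparison of $p$-adic rank defects, not a formal monotonicity: restricting to a face can in principle collapse the $\rsf$-rank by more than the characteristic-zero rank, so the inequality must be forced out of the non-confluent structure of $A$. I would attack it by choosing a primitive supporting functional $u$ with $\langle u,a_{j}\rangle=0$ for $j\in\phi$ and $\langle u,a_{j}\rangle>0$ for $j\in\theta\setminus\phi$, passing to a $u$-adapted integral basis of $\mathbb{Z}^{d+1}$ so that $A[\phi]$ lands in the hyperplane sublattice $u^{\perp}\cap\mathbb{Z}^{d+1}$, and then comparing the elementary divisors of $A[\phi]$ and $A[\theta]$ via the resulting block decomposition, using the all-ones row provided by non-confluence to bound the $p$-divisible elementary divisors of a proper face by those of $\theta$. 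This rank-defect estimate, rather than the dimension bookkeeping, is where I expect the real difficulty of the proposition to concentrate.
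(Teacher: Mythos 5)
Your lower bound is correct, and your stratification of the fibre variety by the torus orbits of the lattice ideal of $\ker_{\mathbb{Z}}A[\theta]$ is a genuinely different route from the paper, which instead degenerates $\mathfrak{S}_{k}(A,\theta)$ to a monomial initial ideal for an explicitly constructed term order and counts variables in the associated primes. Your per-stratum count $\rank A[\phi]+n-\rank_{\Lambda}A[\phi]_{\Lambda}$ is right, and it settles parts 2 and 4 at once since the two ranks agree in characteristic zero. The fatal problem is the inequality you defer to the end: $\rank A[\phi]-\rank_{\rsf}A[\phi]_{\rsf}\le\rank A[\theta]-\rank_{\rsf}A[\theta]_{\rsf}$ for every face support $\phi\subseteq\theta$ is \emph{false}, even for non-confluent $A$. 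Take $d+1=3$, $n=4$, $k=0$,
\begin{equation*}
A=\begin{pmatrix}1&1&1&1\\0&p&1&0\\0&0&1&1\end{pmatrix},\qquad\theta=\{1,2,3,4\}.
\end{equation*}
The functional $u=(0,0,1)$ is nonnegative on all four columns and vanishes exactly on the first two, so $\phi=\{1,2\}$ is a face support; yet $\rank A[\phi]-\rank_{\rsf}A[\phi]_{\rsf}=2-1=1$ while $\rank A[\theta]-\rank_{\rsf}A[\theta]_{\rsf}=3-3=0$. Your stratum over $O_{\{1,2\}}$ then has dimension $2+4-1=5>4=n$, so the argument cannot close as written.

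This is not an artifact of your method. Here $\ker_{\mathbb{Z}}A[\theta]$ is generated by $(-1,1,-p,p)$, so every element of $\lbox_{0}(A,\theta)$ lies in $\bigl\langle\xi(x_{3}),\xi(x_{4})\bigr\rangle$ (writing $\xi(x_{j})$ for $\xi(X_{j}/X_{0})$ as in the paper's examples), and over $\rsf$ the row-one generator of $\mathrm{L}_{0}(A,\theta)$ degenerates to $x_{3}\xi(x_{3})$; hence the five-dimensional locus $\{\xi(x_{3})=\xi(x_{4})=0,\ x_{1}\xi(x_{1})+x_{2}\xi(x_{2})=0\}$ sits inside $S_{0}(A,\theta)$ outright, in direct tension with the asserted bound $n+\rank A[\theta]-\rank_{\rsf}A[\theta]_{\rsf}=4$. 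What your stratification actually yields is $n+\max_{\phi}\bigl(\rank A[\phi]-\rank_{\rsf}A[\phi]_{\rsf}\bigr)$ over face supports $\phi$, which can strictly exceed the stated bound; the rank-defect monotonicity you hoped to extract from non-confluence does not hold. The two situations the paper needs downstream do survive your argument --- characteristic zero, and characteristic $p$ under $p$-nondegeneracy, where every subset $\phi$ has zero defect by hypothesis, so every stratum has dimension exactly $n$ --- but as a proof of the proposition as stated the proposal contains an unfixable step, and you should test the example above against the paper's own argument, whose claim that $\xi(X_{i}/X_{k})$ for every non-pivot index $i$ must lie in each associated monomial prime of the initial ideal appears to fail for the same matrix.
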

\begin{proof}
	We first prove 1. Write $r=\rank A[\theta]$ and $r_{p}=\rank_{\rsf}A[\theta]$. There exists $P\in\gl(\mathbb{Q},d+1)$ whose entries are elements of $\mathbb{Z}_{(p)}$, such that the following conditions hold.
	\begin{enumerate}[label=$\star$]
		\item $P_{\rsf}\in\gl(\rsf,d+1)$.
		\item Write $PA[\theta]=[a'_{ij}]_{0\leq i\leq d;j\in\theta}$, then there exist integers $n_{1}<\dots<n_{r_{p}}$ in $\theta$, such that
		\begin{equation*}
			a'_{ij}=\begin{cases}
				1&0\leq i\leq r_{p}-1,j=n_{i+1}\\
				0&0\leq i\leq d,j=n_{l+1},l\in\{0,\dots,r_{p}-1\}\setminus\{i\}
			\end{cases}.
		\end{equation*}
	\end{enumerate}
	This implies that $PA[\theta]_{\rsf}$ is in reduced echelon form. Then, there exists $P'\in\gl(\mathbb{Q},d+1)$, such that the following condition holds
	\begin{enumerate}[label=$\star$]
		\item Write $P'PA[\theta]=[a''_{ij}]_{0\leq i\leq d;j\in\theta}$, then there exist integers $n_{r_{p}+1},\dots,n_{r}$ in $\theta\setminus\{n_{1},\dots,n_{r_{p}}\}$, such that
		\begin{equation*}
			a''_{ij}=\begin{cases}
				1&0\leq i\leq r-1,j=n_{i+1}\\
				0&0\leq i\leq r-1,j=n_{l+1},l\in\{0,\dots,r-1\}\setminus\{i\}\\
				0&r\leq i\leq d,j\in\theta
			\end{cases}.
		\end{equation*}
	\end{enumerate}
	This means that $P'PA[\theta]$ is in reduced echelon form. Let $\qtup(\theta,k)\subset(\mathbb{Z}_{\geq 0})^{4\cdot\#(\theta\setminus\{k\})}$ be the set of $4\cdot\#(\theta\setminus\{k\})$-tuples $(\alpha_{i},\beta_{i},\alpha'_{i},\beta'_{i})_{i\in\theta\setminus\{k\}}$ satisfying the following conditions.
	\begin{enumerate}[label=$\bullet$]
		\item $\sum_{i\in\theta\setminus\{k\}}\beta_{i}=\sum_{i\in\theta\setminus\{k\}}\beta'_{i}$.
		\item If $\alpha_{i}=\alpha'_{i}=0$ for all $i\in\theta\setminus\{k\}$, then the following conditions hold.
		\begin{enumerate}[label=$\circ$]
			\item For any $i\in\theta\setminus\{k,n_{1},\dots,n_{r}\}$, we have $\beta_{i}=0$.
			\item There exists $i\in\theta\setminus\{k,n_{1},\dots,n_{r}\}$, such that $\beta'_{i}>0$.
		\end{enumerate}
		\item If there exists $i\in\theta\setminus\{k\}$ such that $\alpha_{i}>0$, then the following conditions hold.
		\begin{enumerate}[label=$\circ$]
			\item For any $i\in\theta\setminus\{k\}$, we have $\alpha'_{i}=0$.
			\item For any $i\in\theta\setminus\{k,n_{1},\dots,n_{r}\}$, we have $\alpha_{i}=\beta_{i}$.
			\item For any $i\in\{n_{1},\dots,n_{r}\}\setminus\{k\}$, we have $\alpha_{i}\leq\beta_{i}$.
			\item For $i\in\{n_{1},\dots,n_{r}\}\setminus\{k\}$, if $\alpha_{i}<\beta_{i}$, then $\beta'_{i}=0$.
			\item There exists $i\in\theta\setminus\{k,n_{1},\dots,n_{r}\}$, such that $\beta'_{i}>0$.
		\end{enumerate}
		\item If there exists $i\in\theta\setminus\{k\}$ such that $\alpha'_{i}>0$, then the following conditions hold.
		\begin{enumerate}[label=$\circ$]
			\item For any $i\in\theta\setminus\{k\}$, we have $\alpha_{i}=0$.
			\item For any $i\in\theta\setminus\{k,n_{1},\dots,n_{r}\}$, we have $\beta_{i}=0$.
			\item There exists $i\in\theta\setminus\{k,n_{1},\dots,n_{r}\}$, such that $\beta'_{i}>0$.
		\end{enumerate}
	\end{enumerate}
	Then, we denote by $\mono(k)$ the multiplicative monoid generated by $\coor(k)$ and define subsets of $\mono(k)\times\mono(k)$ by
	\begin{equation*}
	\begin{aligned}
		\varrho_{0}&=\biggl\{\biggl(1,\prod_{i\in\theta\setminus\{k\}}\bigl(\tfrac{X_{i}}{X_{k}}\bigr)\biggr)\biggm|\exists i\in\theta\setminus\{k\},\alpha_{i}+\beta_{i}>0\biggr\},\\
		\varrho_{1}&=\bigl\{\bigl(\tfrac{X_{i}}{X_{k}}\xi\bigl(\tfrac{X_{i}}{X_{k}}\bigr),\tfrac{X_{j}}{X_{k}}\xi\bigl(\tfrac{X_{j}}{X_{k}}\bigr)\bigr)\bigm|i\in\theta\setminus\{k,n_{1},\dots,n_{r_{p}}\};j\in\{n_{1},\dots,n_{r_{p}}\}\setminus\{k\}\bigr\},\\
		\varrho_{2}&=\biggl\{\biggl(\prod_{i\in\theta\setminus\{k\}}\bigl(\tfrac{X_{i}}{X_{k}}\bigr)^{\alpha_{i}}\xi\bigl(\tfrac{X_{i}}{X_{k}}\bigr)^{\beta_{i}},\prod_{i\in\theta\setminus\{k\}}\bigl(\tfrac{X_{i}}{X_{k}}\bigr)^{\alpha'_{i}}\xi\bigl(\tfrac{X_{i}}{X_{k}}\bigr)^{\beta'_{i}}\biggr)\biggm|(\alpha_{i},\beta_{i},\alpha'_{i},\beta'_{i})\in\qtup(\theta,k)\biggr\}.
	\end{aligned}
	\end{equation*}
	For a subset $\varrho\subset\mono(k)\times\mono(k)$, we denote by $\varrho^{\rev}$ the subset of $\mono(k)\times\mono(i)$ consisting of the reverse pairs of elements in $\varrho$. We denote by $\orb(\varrho)$ the orbit of $\varrho$ by the diagonal action of $\mono(k)$ on $\mono(k)\times\mono(k)$. We note that if
	\begin{equation*}
		\biggl(\prod_{i\in\theta\setminus\{k\}}\bigl(\tfrac{X_{i}}{X_{k}}\bigr)^{\alpha_{i}}\xi\bigl(\tfrac{X_{i}}{X_{k}}\bigr)^{\beta_{i}},\prod_{i\in\theta\setminus\{k\}}\bigl(\tfrac{X_{i}}{X_{k}}\bigr)^{\alpha'_{i}}\xi\bigl(\tfrac{X_{i}}{X_{k}}\bigr)^{\beta'_{i}}\biggr)\in\orb(\varrho_{0}^{\rev}),
	\end{equation*}
	then $\beta_{i}\geq\beta'_{i}$ for all $i\in\theta\setminus\{k\}$, which implies that
	\begin{equation*}
		\orb(\varrho_{0}^{\rev})\cap\orb(\varrho_{1})=\orb(\varrho_{0}^{\rev})\cap\orb(\varrho_{2})=\varnothing.
	\end{equation*}
	We also note that if $(\alpha_{i},\beta_{i},\alpha'_{i},\beta'_{i})_{i\in\theta\setminus\{k\}}\in\quad(\theta,k)$, then either $\alpha_{i}\geq\alpha'_{i}$ for any $i\in\theta\setminus\{k\}$ or $\alpha_{i}\leq\alpha'_{i}$ for any $i\in\theta\setminus\{k\}$, which implies that
	\begin{equation*}
		\orb(\varrho_{1}^{\rev})\cap\orb(\varrho_{2})=\varnothing.
	\end{equation*}
	Therefore, there exists a term order of $\mono(k)$ defined by $\varrho_{0},\varrho_{1},\varrho_{2}$. We denote this term order by $\prec$. For an ideal $\mathfrak{I}$ of $\rsf[\coor(k)]$, we denote by $\init_{\prec}\mathfrak{I}$ the initial ideal for $\mathfrak{I}$ corresponding to the term order $\prec$. Then, we note that
	\begin{equation*}
		\init_{\prec}\mathfrak{S}_{k}(A,\theta)\subset\bigl\langle\bigl\{\tfrac{X_{i}}{X_{k}},\xi\bigl(\tfrac{X_{j}}{X_{k}}\bigr)\bigm|i\in\{n_{1},\dots,n_{r_{p}}\}\setminus\{k\};j\in\{0,\dots,n\}\setminus\{k,n_{1},\dots,n_{r}\}\bigr\}\bigr\rangle,
	\end{equation*}
	which implies that $\dim S_{k}(A,\theta)\geq\begin{cases}n+r-r_{p}&k\in\theta\setminus\{n_{r_{p}+1},\dots,n_{r}\}\\n+r-r_{p}-1&k\in\{n_{r_{p}+1},\dots,n_{r}\}\end{cases}$. On the other hand, for a subset $\varsigma\subset\coor(k)$, if we assume that $\init_{\prec}\mathfrak{S}_{k}(A,\theta)\subset\langle\varsigma\rangle$, then the following conditions are necessarily satisfied.
	\begin{enumerate}[label=$\star$]
		\item $\Xi_{k}(A,\theta)\subset\varsigma$.
		\item For every $i\in\{n_{1},\dots,n_{r_{p}}\}\setminus\{k\}$, at least one of $\tfrac{X_{i}}{X_{k}},\xi\bigl(\tfrac{X_{i}}{X_{k}}\bigr)$ is an element of $\varsigma$.
		\item $\bigl\{\xi\bigl(\tfrac{X_{i}}{X_{k}}\bigr)\bigm|i\in\theta\setminus\{k,n_{1},\dots,n_{r}\}\bigr\}\subset\varsigma$.
	\end{enumerate}
	This implies that $\#\varsigma\geq\begin{cases}n+r-r_{p}&k\in\theta\setminus\{n_{r_{p}+1},\dots,n_{r}\}\\n+r-r_{p}-1&k\in\{n_{r_{p}+1},\dots,n_{r}\}\end{cases}$, and hence
	\begin{equation*}
		\dim S_{k}(A,\theta)=\begin{cases}
				n+r-r_{p}&k\in\theta\setminus\{n_{r_{p}+1},\dots,n_{r}\}\\
				n+r-r_{p}-1&k\in\{n_{r_{p}+1},\dots,n_{r}\}
		\end{cases}.
	\end{equation*}
	We note that if $k\in\{n_{r_{p}+1},\dots,n_{r}\}$, then $r>r_{p}$, which implies that $n+r-r_{p}-1\geq n$. Therefore, the assertion follows.
	
	Next, we set $r_{p}=r$ and replace $P_{\rsf}$ (resp. $PA[\theta]_{\rsf}$) with $P$ (resp. $PA[\theta]$). Then, repeat the above process, the second assertion follows.
	
	Finally, we replace $\xi\bigl(\frac{X_{0}}{X_{k}}\bigr)$ with $\frac{X_{0}}{X_{k}}$ and repeat the above discussion, then the third and fourth assertions follow.
\end{proof}

We note that $S_{0}(A,\theta),\dots,S_{n}(A,\theta)$ glue together to be a closed subset of $T^{*}\prs^{n}$ which we denote by $\overline{S}(A,\theta)$, and $S_{1}^{\infty}(A,\theta),\dots,S_{n}^{\infty}(A,\theta)$ glue together to be a closed subset of $T^{*}\prs^{n}$ which we denote by $S^{\infty}(A,\theta)$. Then the following is a corollary of \Cref{dimention-local-dimension}.

\begin{cor}\label{dimension-global-dimension}
	Let $A=[a_{ij}]_{0\leq i\leq d;1\leq j\leq n}$ be a non-confluent matrix, and let $\theta\subset\{1,\dots,n\}$ be a non-empty subset. Then, the following statements are true.
	\begin{enumerate}
		\item When $\Lambda=\rsf$, we have $n\leq\dim\overline{S}(A,\theta),\,\dim S^{\infty}(A,\theta)\leq n+\rank A[\theta]-\rank_{\kappa}A[\theta]$.
		\item When $\Lambda=\frf$, we have $\dim\overline{S}(A,\theta)=\dim S^{\infty}(A,\theta)=n$.\qed
	\end{enumerate}
\end{cor}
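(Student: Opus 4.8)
The plan is to deduce this purely formally from the local computation of \Cref{dimention-local-dimension}, exploiting the fact that the dimension of a scheme is determined on a finite open cover. First I would record the relevant covers. Since $\afs^{n}_{0},\dots,\afs^{n}_{n}$ cover $\prs^{n}$, the cotangent bundles $T^{*}\afs^{n}_{0},\dots,T^{*}\afs^{n}_{n}$ form an open cover of $T^{*}\prs^{n}$, and by the very construction of the gluing recalled just above the corollary we have $\overline{S}(A,\theta)\cap T^{*}\afs^{n}_{k}=S_{k}(A,\theta)$ for every $0\leq k\leq n$; thus $\{S_{k}(A,\theta)\}_{0\leq k\leq n}$ is a finite open cover of $\overline{S}(A,\theta)$. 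Likewise, $S^{\infty}(A,\theta)$ is by definition glued from $S^{\infty}_{1}(A,\theta),\dots,S^{\infty}_{n}(A,\theta)$, so $\{S^{\infty}_{k}(A,\theta)\}_{1\leq k\leq n}$ is a finite open cover of $S^{\infty}(A,\theta)$. In particular every chart is nonempty, because \Cref{dimention-local-dimension} already supplies $\dim S_{k}(A,\theta)\geq n\geq 0$ and $\dim S^{\infty}_{k}(A,\theta)\geq n\geq 0$.

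Next I would invoke the standard fact that for a scheme $Z$ with a finite open cover by nonempty subschemes $U_{0},\dots,U_{m}$ one has $\dim Z=\max_{0\leq k\leq m}\dim U_{k}$. Applied to the two covers above this gives
\begin{equation*}
	\dim\overline{S}(A,\theta)=\max_{0\leq k\leq n}\dim S_{k}(A,\theta),\qquad\dim S^{\infty}(A,\theta)=\max_{1\leq k\leq n}\dim S^{\infty}_{k}(A,\theta).
\end{equation*}
At this point the problem is reduced entirely to reading off the local bounds.

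Finally I would argue case by case. When $\Lambda=\rsf$, statements 1 and 3 of \Cref{dimention-local-dimension} place each $\dim S_{k}(A,\theta)$ and each $\dim S^{\infty}_{k}(A,\theta)$ in the interval $[\,n,\ n+\rank A[\theta]-\rank_{\rsf}A[\theta]_{\rsf}\,]$; since both endpoints are independent of the chart index $k$, the maximum over $k$ lands in the same interval, which is assertion 1. When $\Lambda=\frf$, statements 2 and 4 give $\dim S_{k}(A,\theta)=\dim S^{\infty}_{k}(A,\theta)=n$ for all admissible $k$, so the maximum is again $n$, which is assertion 2. I do not expect a genuine obstacle here: all the real content sits in \Cref{dimention-local-dimension}, and the only point worth flagging is that passing to the maximum over the cover preserves both the lower and the upper estimate \emph{precisely because} those estimates are uniform in $k$, together with the trivial verification that the charts actually exhaust $\overline{S}(A,\theta)$ and $S^{\infty}(A,\theta)$, which is immediate from how these sets were glued.
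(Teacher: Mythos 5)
Your argument is correct and is exactly the route the paper intends: the corollary is stated with an immediate \qed because $\overline{S}(A,\theta)$ and $S^{\infty}(A,\theta)$ are by construction glued from the affine pieces $S_{k}(A,\theta)$ and $S^{\infty}_{k}(A,\theta)$, so their dimensions are the maxima of the local dimensions computed in \Cref{dimention-local-dimension}, and the uniform bounds there yield both assertions. Nothing is missing.
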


\subsection{Direct images of conormal bundles}\label{image}

We prove that the union of closed conical subsets defined in \Cref{dimension} is the direct image of the singular support of an $\ell$-adic Kummer sheaf.

Let $\Sigma$ be a generable set of cones in $\mathbb{Z}^{d}$, and let $\varepsilon\subset\edge(\Sigma)$ be a subset. Let
\begin{equation*}
\begin{aligned}
	D(\Sigma,\varepsilon)&=\begin{cases}
		\trv(\Sigma)&\varepsilon=\varnothing\\
		\bigcap_{\epsilon\in\varepsilon}D(\Sigma,\epsilon)&\varepsilon\neq\varnothing
	\end{cases}\\
	\mathcal{E}(\Sigma)&=\{\varepsilon\subset\edge(\Sigma)\mid D(\Sigma,\varepsilon)\neq\varnothing\}
\end{aligned}
\end{equation*}
We note that $\varepsilon\in\mathcal{E}(\Sigma)$ if and only if there exists $\sigma\in\Sigma$, such that $\varepsilon\subset\edge(\sigma)$. For $\varepsilon\in\mathcal{E}(\Sigma)$, we write $\infty_{D(\Sigma,\varepsilon)}=D(\Sigma,\varepsilon)\times\infty$. For a matrix $B=[b_{ij}]_{1\leq i\leq d;1\leq j\leq n}$ whose entries are integers, let
\begin{equation*}
\begin{aligned}
	D_{B}(\Sigma,\varepsilon)&=D_{B}(\Sigma)\cap\prs^{n}_{D(\Sigma,\varepsilon)},\\
	D_{B}^{\infty}(\Sigma,\varepsilon)&=D_{B}(\Sigma)\cap\infty_{D(\Sigma,\varepsilon)}.
\end{aligned}
\end{equation*}
Assume that $B$ is $\Sigma$-good, in which case $\overline{D}_{B}(\Sigma)$ is a divisor with simple normal crossings. Let
\begin{equation*}
	N_{B}(\Sigma)=\bigcup_{\varepsilon\subset\mathcal{E}(\Sigma)}(T^{*}_{\prs^{n}_{D(\Sigma,\epsilon)}}\prs^{n}_{\trv(\Sigma)}\cup T^{*}_{\infty_{D(\Sigma,\epsilon)}}\prs^{n}_{\trv(\Sigma)}\cup T^{*}_{D_{B}(\Sigma,\epsilon)}\prs^{n}_{\trv(\Sigma)}\cup T^{*}_{D_{B}^{\infty}(\Sigma,\epsilon)}\prs^{n}_{\trv(\Sigma)}).
\end{equation*}
Let $\overline{j}_{B,\Sigma}:U_{B}\rightarrow\prs^{n}_{\trv(\Sigma)}$ be the open immersion.

\begin{prp}\label{image-kummer-sheaf}
	Assume that $\Lambda$ is a field. Let $\Sigma$ be a generable set of cones in $\mathbb{Z}^{d}$, and let $B=[b_{ij}]_{1\leq i\leq d;1\leq j\leq n}$ be a $\Sigma$-good matrix. Let $\chi=(\chi_{0},\dots,\chi_{d}):\mu_{q-1}^{d+1}\rightarrow\ql^{*}$ be a multiplicative character, then the following statements are true.
	\begin{enumerate}
		\item $SS\overline{j}_{B,\Sigma!}\kum_{\chi}(B)=N_{B}(\Sigma)$.
		\item $CC\overline{j}_{B,\Sigma!}\kum_{\chi}(B)=(-1)^{d+n}[N_{B}(\Sigma)]$.
	\end{enumerate}
\end{prp}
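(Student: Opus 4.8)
The plan is to recognize $\kum_{\chi}(B)$ as a tamely ramified lisse sheaf of rank one on the complement of a divisor with simple normal crossings, and then to reduce the computation of both its singular support and its characteristic cycle to a purely local one, which I settle by a product formula. First I would record the geometric input. Since $B$ is $\Sigma$-good, \Cref{divisor-good-is-normal} tells us that $\overline{D}_{B}(\Sigma)=\prs^{n}_{\trv(\Sigma)}\setminus U_{B}$ is a divisor with simple normal crossings, whose irreducible components are $D_{B}(\Sigma)$, the divisor $\infty_{\trv(\Sigma)}$, and the toric divisors $\prs^{n}_{D(\Sigma,\epsilon)}$ for $\epsilon\in\edge(\Sigma)$. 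By its very definition $\kum_{\chi}(B)=j_{B}^{*}\tau^{*}(\tau_{1}^{*}\kum_{\chi_{1}}\otimes\dots\otimes\tau_{d}^{*}\kum_{\chi_{d}})\otimes g_{B}'^{*}\kum_{\chi_{0}^{-1}}$ is a tensor product of inverse images of Kummer sheaves, hence lisse of rank one on $U_{B}$ and tamely ramified along $\overline{D}_{B}(\Sigma)$. As $\prs^{n}_{\trv(\Sigma)}$ is smooth of dimension $n+d$, both $SS$ and $CC$ of $\overline{j}_{B,\Sigma!}\kum_{\chi}(B)$ may be computed \'etale-locally.

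Next I would settle the local model. \'Etale-locally the divisor $\overline{D}_{B}(\Sigma)$ becomes a union $\{y_{1}\cdots y_{c}=0\}$ of coordinate hyperplanes in $\afs^{n+d}$, and a tamely ramified rank one sheaf on the complement is, by the structure of the tame fundamental group, an external tensor product of rank one Kummer sheaves in the coordinates $y_{1},\dots,y_{c}$ with the constant sheaf in the remaining directions. For a single Kummer sheaf $\mathscr{L}_{i}$ on $\mathbb{G}_{m}\subset\afs^{1}$ the Grothendieck--Ogg--Shafarevich/Milnor formula gives $CC(j_{!}\mathscr{L}_{i})=-[T^{*}_{\afs^{1}}\afs^{1}]-[T^{*}_{0}\afs^{1}]$, the coefficient at the origin being $\rank+\mathrm{Sw}=1$ because $\mathscr{L}_{i}$ is tame of rank one. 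Applying the product formula for characteristic cycles and expanding the exterior product of cycles
\begin{equation*}
	\prod_{i=1}^{c}\bigl(-[T^{*}_{\afs^{1}}\afs^{1}]-[T^{*}_{0}\afs^{1}]\bigr)\times(-1)^{n+d-c}[T^{*}_{\afs^{n+d-c}}\afs^{n+d-c}]=(-1)^{n+d}\sum_{I\subset\{1,\dots,c\}}[T^{*}_{V_{I}}\afs^{n+d}],
\end{equation*}
where $V_{I}=\bigcap_{i\in I}\{y_{i}=0\}$, I obtain that locally the characteristic cycle is $(-1)^{n+d}$ times the sum of the conormal bundles of all intersections of the components, each appearing with multiplicity one.

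Finally I would globalize by matching strata. An arbitrary intersection of components of $\overline{D}_{B}(\Sigma)$ picks out a subset $\varepsilon$ of toric divisors, which must lie in $\mathcal{E}(\Sigma)$ so that $D(\Sigma,\varepsilon)\neq\varnothing$, together with the choice of whether to include $\infty_{\trv(\Sigma)}$ and $D_{B}(\Sigma)$; the four possibilities yield respectively $\prs^{n}_{D(\Sigma,\varepsilon)}$, $\infty_{D(\Sigma,\varepsilon)}$, $D_{B}(\Sigma,\varepsilon)$, and $D_{B}^{\infty}(\Sigma,\varepsilon)$, with $\varepsilon=\varnothing$ and no further component recovering the zero section $\prs^{n}_{\trv(\Sigma)}$. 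Thus the conormal bundles produced by the local computation glue to exactly the four families listed in $N_{B}(\Sigma)$, each distinct stratum contributing one irreducible Lagrangian with multiplicity one. This gives statement~2, $CC\overline{j}_{B,\Sigma!}\kum_{\chi}(B)=(-1)^{d+n}[N_{B}(\Sigma)]$, and statement~1 follows since the singular support is the support of the characteristic cycle and every multiplicity is nonzero.

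The main obstacle is the local step: justifying that the \'etale-local structure of $\kum_{\chi}(B)$ is genuinely an external product of Kummer sheaves and that the multiplicity at each boundary stratum equals one regardless of whether the individual characters $\chi_{i}$ are trivial. This rests on tameness together with the fact that the extension by zero $\overline{j}_{B,\Sigma!}$ contributes along every stratum independently of the local monodromy, which is precisely what the coefficient $\rank+\mathrm{Sw}=1$ in the curve computation encodes. One must also verify that the listed strata are exhaustive and that the redundancy in the indexing by $\varepsilon$ (several $\varepsilon$ may produce the same conormal) is harmless, since each irreducible component of $N_{B}(\Sigma)$ still enters the cycle with multiplicity one.
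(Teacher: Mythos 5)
Your proposal is correct, but it takes a more self-contained route than the paper. The paper's proof is a two-line citation: after observing (as you do) that \Cref{divisor-good-is-normal} makes $\overline{D}_{B}(\Sigma)=\prs^{n}_{\trv(\Sigma)}\setminus U_{B}$ a divisor with simple normal crossings and that $\kum_{\chi}(B)$ is lisse and tamely ramified along it, it invokes Proposition~4.11 of Saito's 2017 paper for the singular support and Theorem~7.14 of the same paper for the characteristic cycle; those results state exactly that, for $j_{!}$ of a tame lisse sheaf along an SNC divisor, $SS$ is the union of the conormal bundles of all intersections of components and $CC$ is $(-1)^{\dim}$ times the rank times the sum of those conormals. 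What you do instead is reprove this special case: you reduce \'etale-locally to coordinate hyperplanes, identify the tame rank-one sheaf as an external product of Kummer sheaves (Abhyankar), compute the curve case by Grothendieck--Ogg--Shafarevich (coefficient $\mathrm{rank}+\mathrm{Sw}=1$), and assemble via the K\"unneth formula for characteristic cycles. This buys transparency about where the multiplicity one comes from, at the cost of importing two other nontrivial inputs of Saito's theory — the product formula for $CC$, and (for your deduction of statement~1 from statement~2) the theorem that $SS$ equals the support of $CC$; the paper's citation of Proposition~4.11 gives $SS$ directly without the latter. Your bookkeeping of strata against the four families in $N_{B}(\Sigma)$ (choice of $\varepsilon\in\mathcal{E}(\Sigma)$, plus the independent choices of including $\infty_{\trv(\Sigma)}$ and $D_{B}(\Sigma)$) is the correct match, and the redundancy worry you raise is vacuous for an SNC divisor since distinct nonempty intersections have distinct conormals. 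Either route is acceptable; the paper's is shorter because the cited results are stated at exactly the needed level of generality.
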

\begin{proof}
	By \Cref{divisor-good-is-normal}, we note that $\overline{D}_{B}(\Sigma)=\prs^{n}_{\trv(\Sigma)}\setminus U_{B}$ is a divisor with simple normal crossings. We also note that $\kum_{\chi}(B)$ is a lisse sheaf which is tamely ramified along $\overline{D}_{B}(\Sigma)$. Then, the first assertion follows from \cite[Proposition~4.11]{saito2017characteristic}, and the second assertion follows from \cite[Theorem~7.14]{saito2017characteristic}.
\end{proof}

Next, let $\overline{\pi}_{\Sigma}:\prs^{n}_{\trv(\Sigma)}\rightarrow\prs^{n}$ be the projection. We denote by $d_{\Sigma}:\prs^{n}_{\trv(\Sigma)}\times_{\prs^{n}}T^{*}\prs^{n}\rightarrow T^{*}\prs^{n}_{\trv(\Sigma)}$ the morphism of relative differentials and by $\varpi_{\Sigma}:\prs^{n}_{\trv(\Sigma)}\times_{\prs^{n}}T^{*}\prs^{n}\rightarrow T^{*}\prs^{n}$ be the projection. Assume that $\Delta(\Sigma)$ is a complete fan, in which case we say that $\Sigma$ is \textit{complete}. Then, both $\overline{\pi}_{\Sigma}$ and $\varpi_{\Sigma}$ are projective and hence proper. For a closed subset $N\subset T^{*}\prs^{n}_{\trv(\Sigma)}$, we denote by
\begin{equation*}
	\overline{\pi}_{\Sigma\circ}N=\varpi_{\Sigma}(d_{\Sigma}^{-1}N),
\end{equation*}
which is a closed subset of $T^{*}\prs^{n}$. Furthermore, we denote by
\begin{equation}\label{image-direct-image}
	\overline{\pi}_{\Sigma!}=\varpi_{\Sigma*}\comp d_{\Sigma}^{!}:CH_{d+n}(N)\rightarrow CH_{n}(\overline{\pi}_{\Sigma\circ}N).
\end{equation}
If every irreducible component of $N$ (resp. $\overline{\pi}_{\Sigma\circ}N$) is of dimension $\leq d+n$ (resp. $\leq n$), then \cref{image-direct-image} defines a morphism $\overline{\pi}_{\Sigma!}:Z_{d+n}(N)\rightarrow Z_{n}(\overline{\pi}_{\Sigma\circ}N)$.

\begin{lem}\label{image-direct-image-easy}
	Let $\Sigma$ be a generable set of cones in $\mathbb{Z}^{d}$ which is complete, and let $\varepsilon\in\mathcal{E}(\Sigma)$. If $\Lambda$ is a field, then the following statements are true.
	\begin{enumerate}
		\item $\overline{\pi}_{\Sigma\circ}(T^{*}_{\prs^{n}_{D(\Sigma,\varepsilon)}}\prs^{n}_{\trv(\Sigma)})=T^{*}_{\prs^{n}}\prs^{n}$.
		\item $\overline{\pi}_{\Sigma\circ}(T^{*}_{\infty_{D(\Sigma,\varepsilon)}}\prs^{n}_{\trv(\Sigma)})=T^{*}_{\infty}\prs^{n}$.
	\end{enumerate}
\end{lem}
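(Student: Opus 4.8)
The plan is to prove both equalities set-theoretically, exploiting that $\prs^n_{\trv(\Sigma)}=\trv(\Sigma)\times\prs^n$ is a product and that $\overline{\pi}_{\Sigma}$ is simply the second projection. First I would record the product decomposition of the cotangent bundle: writing $\mathrm{pr}_1,\mathrm{pr}_2$ for the two projections of $\trv(\Sigma)\times\prs^n$, one has $T^*\prs^n_{\trv(\Sigma)}\cong\mathrm{pr}_1^*T^*\trv(\Sigma)\oplus\mathrm{pr}_2^*T^*\prs^n$, so a covector at $(z,p)$ is a pair $(\xi,\eta)$ with $\xi\in T^*_z\trv(\Sigma)$ and $\eta\in T^*_p\prs^n$. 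Since $\overline{\pi}_{\Sigma}=\mathrm{pr}_2$, its differential is the projection onto the second summand, and hence the relative-differential map $d_{\Sigma}$ sends $(z,p,\eta)\in\prs^n_{\trv(\Sigma)}\times_{\prs^n}T^*\prs^n$ to the covector $(z,p,0,\eta)$, i.e. to the covectors whose $\trv(\Sigma)$-component vanishes; meanwhile $\varpi_{\Sigma}$ sends $(z,p,\eta)$ to $(p,\eta)$.

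The second ingredient is the shape of the two conormal bundles, which I would read off from the product structure of the subvarieties; here I use that $D(\Sigma,\varepsilon)$ is a smooth toric subvariety, the cones of $\Sigma$ being nonsingular, so the conormal bundles are genuine vector bundles. Because $\prs^n_{D(\Sigma,\varepsilon)}=D(\Sigma,\varepsilon)\times\prs^n$ contains the full $\prs^n$-factor, its conormal bundle consists of covectors $(\xi,\eta)$ over points $(z,p)$ with $z\in D(\Sigma,\varepsilon)$, $\xi$ conormal to $D(\Sigma,\varepsilon)$ in $\trv(\Sigma)$, and $\eta=0$. For $\infty_{D(\Sigma,\varepsilon)}=D(\Sigma,\varepsilon)\times\infty$ the same reasoning gives covectors with $\xi$ conormal to $D(\Sigma,\varepsilon)$ and $\eta$ conormal to $\infty$ in $\prs^n$.

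With these descriptions the computation of $\overline{\pi}_{\Sigma\circ}$ is immediate. For part 1, a point $(z,p,\eta)$ lies in $d_{\Sigma}^{-1}(T^*_{\prs^n_{D(\Sigma,\varepsilon)}}\prs^n_{\trv(\Sigma)})$ exactly when $z\in D(\Sigma,\varepsilon)$ and $\eta=0$, since its image $(z,p,0,\eta)$ automatically has vanishing $\trv(\Sigma)$-component (which is therefore conormal) and the condition on the $\prs^n$-component is $\eta=0$; applying $\varpi_{\Sigma}$ and using $D(\Sigma,\varepsilon)\neq\varnothing$ (which holds because $\varepsilon\in\mathcal{E}(\Sigma)$), so that $D(\Sigma,\varepsilon)\times\prs^n\rightarrow\prs^n$ is surjective, the image is exactly the zero section $T^*_{\prs^n}\prs^n$. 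For part 2, the preimage consists of $(z,p,\eta)$ with $z\in D(\Sigma,\varepsilon)$, $p\in\infty$, and $\eta$ conormal to $\infty$, and surjectivity of $D(\Sigma,\varepsilon)\times\infty\rightarrow\infty$ yields $\varpi_{\Sigma}$-image equal to $T^*_{\infty}\prs^n$.

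I do not anticipate a serious obstacle: the only points requiring care are the correct identification of the conormal bundle of a product subvariety — the vanishing of the $\prs^n$-covector component in part 1 is precisely what collapses the whole fibre onto the zero section — and the non-emptiness $D(\Sigma,\varepsilon)\neq\varnothing$ needed for surjectivity onto the base. The completeness hypothesis on $\Sigma$ is not used in the set equality itself; it enters only to guarantee that $\overline{\pi}_{\Sigma}$ and $\varpi_{\Sigma}$ are proper, so that $\overline{\pi}_{\Sigma\circ}$ of a closed conical set is again closed and the images computed above are genuinely closed subsets of $T^*\prs^n$.
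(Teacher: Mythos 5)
Your proof is correct and follows essentially the same route as the paper's: both rest on the product decomposition $T^{*}_{\prs^{n}_{D(\Sigma,\varepsilon)}}\prs^{n}_{\trv(\Sigma)}=T^{*}_{D(\Sigma,\varepsilon)}\trv(\Sigma)\times T^{*}_{\prs^{n}}\prs^{n}$ (and its analogue for $\infty_{D(\Sigma,\varepsilon)}$), from which the preimage under $d_{\Sigma}$ is $D(\Sigma,\varepsilon)\times T^{*}_{\prs^{n}}\prs^{n}$ (resp. $D(\Sigma,\varepsilon)\times T^{*}_{\infty}\prs^{n}$) and the image under $\varpi_{\Sigma}$ follows. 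You merely spell out the covector-level details and the non-emptiness of $D(\Sigma,\varepsilon)$ that the paper leaves implicit.
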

\begin{proof}
	We first prove 1. Note that $T^{*}_{\prs^{n}_{D(\Sigma,\varepsilon)}}\prs^{n}_{\trv(\Sigma)}=T^{*}_{D(\Sigma,\varepsilon)}\trv(\Sigma)\times T^{*}_{\prs^{n}}\prs^{n}$. Therefore, the inverse image $d_{\Sigma}^{-1}(T^{*}_{\prs^{n}_{D(\Sigma,\varepsilon)}}\prs^{n}_{\trv(\Sigma)})=D(\Sigma,\varepsilon)\times T^{*}_{\prs^{n}}\prs^{n}$. Then, the assertion follows.
	
	Then, we prove 2. Note that $T^{*}_{\infty_{D(\Sigma,\varepsilon)}}\prs^{n}_{\trv(\Sigma)}=T^{*}_{D(\Sigma,\varepsilon)}\trv(\Sigma)\times T^{*}_{\infty}\prs^{n}$. Therefore, the inverse image $d_{\Sigma}^{-1}(T^{*}_{\infty_{D(\Sigma,\varepsilon)}}\prs^{n}_{\trv(\Sigma)})=D(\Sigma,\varepsilon)\times T^{*}_{\infty}\prs^{n}$. Then, the assertion follows.
\end{proof}

For a generable set $\Sigma$ of cones in $\mathbb{Z}^{d}$ and $\varepsilon\in\mathcal{E}(\Sigma)$, we denote by $\Sigma(\varepsilon)=\bigcup_{\epsilon\in\varepsilon}\Sigma(\epsilon)$. Note that $\sigma\in\Sigma(\varepsilon)$ if and only if $\varepsilon\cap\edge(\sigma)\neq\varnothing$. For a matrix $B=[b_{ij}]_{1\leq i\leq d;1\leq j\leq n}$ whose entries are integers, we define a subset of $\{1,\dots,n\}$ by
\begin{equation*}
	\theta_{B}(\varepsilon)=\begin{cases}
		\{1,\dots,n\}&\varepsilon=\varnothing\\
		\bigcap_{\epsilon\in\varepsilon}\theta_{B}(\epsilon)&\varepsilon\neq\varnothing
	\end{cases}
\end{equation*}
where $\theta_{B}(\epsilon)=\{1\leq i\leq n\mid b_{\epsilon i}=\min\{b_{\epsilon j}\mid 1\leq j\leq n\}\}$ for $\epsilon\in\varepsilon$.

\begin{lem}\label{image-direct-image-hard}
	Let $\Sigma$ be a generable set of cones in $\mathbb{Z}^{d}$ which is complete, and let $\varepsilon\in\mathcal{E}(\Sigma)$. Let $B=[b_{ij}]_{1\leq i\leq d;1\leq j\leq n}$ be a $\Sigma$-good matrix. If $\Lambda$ is a field, then the following statements are true.
	\begin{enumerate}
		\item $\overline{\pi}_{\Sigma\circ}(T^{*}_{D_{B}(\Sigma,\varepsilon)}\prs^{n}_{\trv(\Sigma)})\subset\bigcup_{\sigma\in\Sigma(\varepsilon)}\overline{S}(\widehat{B},\theta_{B}(\varepsilon\cap\edge(\sigma)))$.
		\item $\overline{\pi}_{\Sigma\circ}(T^{*}_{D_{B}^{\infty}(\Sigma,\varepsilon)}\prs^{n}_{\trv(\Sigma)})\subset\bigcup_{\sigma\in\Sigma(\varepsilon)}S^{\infty}(\widehat{B},\theta_{B}(\varepsilon\cap\edge(\sigma)))$.
	\end{enumerate}
\end{lem}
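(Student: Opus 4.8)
The plan is to reduce everything to an explicit computation on the affine charts of $\prs^n_{\trv(\Sigma)}$, exploiting the simple normal crossing description of $\overline{D}_B(\Sigma)$ provided by \Cref{divisor-good-is-normal}. Since $\Sigma$ is complete, $\overline{\pi}_{\Sigma}$ and hence $\varpi_{\Sigma}$ are proper, and since $\overline{S}(\widehat{B},\theta)$ (resp. $S^{\infty}(\widehat{B},\theta)$) is glued from the local pieces $S_{k}(\widehat{B},\theta)$ (resp. $S^{\infty}_{k}(\widehat{B},\theta)$), it suffices to fix a chart $\trv(\sigma)\times\afs^{n}_{k}$ with $\sigma\in\Sigma$ and $0\leq k\leq n$ and to show that every covector of $\overline{\pi}_{\Sigma\circ}(T^{*}_{D_{B}(\Sigma,\varepsilon)}\prs^{n}_{\trv(\Sigma)})$ lying over this chart satisfies all generators of $\mathfrak{S}_{k}(\widehat{B},\theta)$ with $\theta=\theta_{B}(\varepsilon\cap\edge(\sigma))$. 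A point of $D_{B}(\Sigma,\varepsilon)$ meets $\trv(\sigma)$ only when $\varepsilon\subset\edge(\sigma)$, so I may assume $\varepsilon\cap\edge(\sigma)=\varepsilon$ and $\sigma\in\Sigma(\varepsilon)$, in which case $\theta=\theta_{B}(\varepsilon)$.

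On this chart write $u=G_{B}(\sigma)/X_{k}=\sum_{i=1}^{n}\tfrac{X_{i}}{X_{k}}\prod_{\epsilon\in\edge(\sigma)}t_{\sigma,\epsilon}^{b_{\epsilon i}^{+}}$, so that $D_{B}(\Sigma,\varepsilon)$ is cut out by $u$ together with the $t_{\sigma,\epsilon}$ for $\epsilon\in\varepsilon$. By the SNC property its conormal bundle is spanned by $du$ and the $dt_{\sigma,\epsilon}$ with $\epsilon\in\varepsilon$. Now $\overline{\pi}_{\Sigma\circ}$ selects those conormal covectors that are pulled back from $\prs^{n}$, i.e. that carry no $dt_{\sigma,\epsilon}$ component: a covector $\eta=\sum_{j}\xi\bigl(\tfrac{X_{j}}{X_{k}}\bigr)d\tfrac{X_{j}}{X_{k}}$ lies in the image precisely when, over some point $y\in D_{B}(\Sigma,\varepsilon)$, its pullback equals $c\,du+\sum_{\epsilon\in\varepsilon}a_{\epsilon}dt_{\sigma,\epsilon}$; absence of $dt$-components forces the $dt_{\sigma,\epsilon}$-coefficient of $c\,du$ to vanish for every $\epsilon\in\edge(\sigma)\setminus\varepsilon$, and gives $\xi\bigl(\tfrac{X_{j}}{X_{k}}\bigr)=c\prod_{\epsilon}t_{\sigma,\epsilon}^{b_{\epsilon j}^{+}}$ evaluated at $y$. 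The case $c=0$ forces $\eta=0$, which lies in every $S_{k}$, so I assume $c\neq0$.

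Evaluating on the stratum $t_{\sigma,\epsilon}=0$ ($\epsilon\in\varepsilon$), the monomial $\prod_{\epsilon}t^{b_{\epsilon j}^{+}}$ vanishes unless $b_{\epsilon j}^{+}=0$ for all $\epsilon\in\varepsilon$, i.e. unless $j\in\theta$; since $X_{0}$ does not occur in $u$, this yields $\xi\bigl(\tfrac{X_{j}}{X_{k}}\bigr)=0$ for $j\in\{0,\dots,n\}\setminus(\theta\cup\{k\})$, which is $\Xi_{k}(\widehat{B},\theta)$. The relation $u=0$ gives $\sum_{j\in\theta}\tfrac{X_{j}}{X_{k}}\xi\bigl(\tfrac{X_{j}}{X_{k}}\bigr)=c\,u=0$, the $l=0$ element of $\mathrm{L}_{k}$. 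Writing $L_{l}=\sum_{j\in\theta}b_{lj}\tfrac{X_{j}}{X_{k}}\xi\bigl(\tfrac{X_{j}}{X_{k}}\bigr)$, the vanishing of the $dt_{\sigma,\epsilon}$-coefficient for $\epsilon\in\edge(\sigma)\setminus\varepsilon$ gives $\sum_{j\in\theta}b_{\epsilon j}\tfrac{X_{j}}{X_{k}}\xi\bigl(\tfrac{X_{j}}{X_{k}}\bigr)=0$, and for $\epsilon\in\varepsilon$ the same identity holds because $b_{\epsilon j}$ is constant on $\theta$ by definition of $\theta_{B}(\epsilon)$, combined with $u=0$. Using $b_{\epsilon j}=\sum_{l}v_{\epsilon l}b_{lj}$ this reads $\sum_{l}v_{\epsilon l}L_{l}=0$ for every $\epsilon\in\edge(\sigma)$, and since $\sigma$ is nonsingular the generators $v_{\epsilon}$ form a basis, whence $L_{l}=0$ for all $l$; this is $\mathrm{L}_{k}(\widehat{B},\theta)$. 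Finally, for $w$ with $\sum_{j\in\theta}a_{lj}w_{j}=0$ for all $l$ one checks, using $\sum_{j}w_{j}=0$ and $b_{\epsilon j}=\sum_{l}v_{\epsilon l}b_{lj}$, that the $t$-exponents of $\prod_{w_{j}>0}\xi(\cdot)^{w_{j}}$ and $\prod_{w_{j}<0}\xi(\cdot)^{-w_{j}}$ coincide, giving $\lbox_{k}(\widehat{B},\theta)$. Thus $\eta\in S_{k}(\widehat{B},\theta)$, which proves the first inclusion after taking closures and the union over $\sigma\in\Sigma(\varepsilon)$.

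For the second inclusion I repeat the computation with the additional defining equation $\tfrac{X_{0}}{X_{k}}=0$ of $\infty_{D(\Sigma,\varepsilon)}$ (here $k\geq1$): the conormal bundle now also contains $d\tfrac{X_{0}}{X_{k}}$, so $\xi\bigl(\tfrac{X_{0}}{X_{k}}\bigr)$ is no longer forced to vanish while instead $\tfrac{X_{0}}{X_{k}}=0$ holds, which is exactly the substitution defining $\Xi^{\infty}_{k}$; the derivation of $\mathrm{L}_{k}$ and $\lbox_{k}$ is unchanged, so $\eta\in S^{\infty}_{k}(\widehat{B},\theta)$. The main obstacle is the third paragraph: naive coefficient comparison only produces the $\mathrm{L}$-relations indexed by $\edge(\sigma)\setminus\varepsilon$, and the crux is to recover the remaining ones from the constancy of $b_{\epsilon\bullet}$ on $\theta$ together with $u=0$, and then to pass to $L_{l}=0$ for all $l$ through the invertibility of the matrix $V_{\sigma}$ of edge generators. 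Some care is also needed to confirm that, after the automorphism used in \Cref{divisor-good-is-normal}, the SNC structure genuinely presents the conormal bundle by $du$ and the $dt_{\sigma,\epsilon}$, and that the degenerate loci where some $t_{\sigma,\epsilon}$ with $\epsilon\in\edge(\sigma)\setminus\varepsilon$ also vanishes are absorbed by the closures and the union over $\sigma\in\Sigma(\varepsilon)$.
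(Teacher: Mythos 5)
Your proposal is correct and follows essentially the same route as the paper: both arguments reduce to the affine charts $\trv(\sigma)\times\afs^{n}_{k}$, use the simple normal crossing presentation of $\overline{D}_{B}(\Sigma)$ to write the conormal bundle explicitly, kill the $dt_{\sigma,\epsilon}$-components to compute $\varpi_{\Sigma}(d_{\Sigma}^{-1}(\cdot))$, and then check that the generators $\Xi_{k}$, $\mathrm{L}_{k}$, $\lbox_{k}$ of $\mathfrak{S}_{k}(\widehat{B},\theta)$ vanish on the result. The only difference is presentational (covector matching versus ideal contraction), and you usefully make explicit the step the paper leaves implicit, namely recovering the full set of relations $\mathrm{L}_{k}(\widehat{B},\theta)$ from the edge-indexed ones via the invertibility of $V_{\sigma}$.
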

\begin{proof}
	We first prove 1. For $0\leq k\leq n$ and $\sigma\in\Sigma(\varepsilon)$, consider the following commutative diagram over $\Lambda$.
	\begin{equation*}
	\begin{tikzcd}
		T^{*}\afs^{n}_{k,\trv(\sigma)}\arrow[d]\arrow[dr,phantom,"\square"]&\afs^{n}_{k,\trv(\sigma)}\times_{\afs^{n}_{k}}T^{*}\afs^{n}_{k}\arrow[d]\arrow[l,"d_{k,\sigma}"']\arrow[r,"\varpi_{k,\sigma}"]&T^{*}\afs^{n}_{k}\arrow[d]\\
		T^{*}\prs^{n}_{\trv(\Sigma)}&\prs^{n}_{\trv(\Sigma)}\times_{\prs^{n}}T^{*}\prs^{n}\arrow[l,"d_{\Sigma}"']\arrow[r,"\varpi_{\Sigma}"]\arrow[ur,phantom,"\square"]&T^{*}\prs^{n}
	\end{tikzcd}.
	\end{equation*}
	We write $T^{*}\trv(\sigma)=\spec\Lambda[\Gamma(\sigma)][\xi(t_{\sigma,\epsilon})\mid\epsilon\in\edge(\sigma)]$. Let $\theta=\theta_{B}(\varepsilon\cap\edge(\sigma))$, and let $D_{k,B}(\sigma,\varepsilon)=D_{B}(\sigma,\varepsilon)\cap\afs^n_{k,\trv(\sigma)}$. Then, we note that $D_{k,B}(\sigma,\varepsilon)$ is the closed subset of $\afs^n_{k,\trv(\sigma)}$ defined by the ideal generated by the set
		\begin{equation*}
			\biggl\{\sum_{i\in\theta\setminus\{k\}}\tfrac{X_{i}}{X_{k}}\prod_{\epsilon\in\edge(\sigma)\setminus\varepsilon}t_{\sigma,\epsilon}^{b_{\epsilon j}}\biggr\}\cup\{t_{\sigma,\epsilon}\mid\epsilon\in\varepsilon\}.
		\end{equation*}
	As a consequence, the conormal bundle $T^{*}_{D_{k,B}(\sigma,\varepsilon)}\afs^{n}_{k,\trv(\sigma)}$ is the closed subset of $T^{*}\afs^{n}_{\trv(\sigma)}$ defined by the ideal $\mathfrak{I}(T^{*}_{D_{k,B}(\sigma,\varepsilon)}\afs^{n}_{k,\trv(\sigma)})$ of $\Lambda[\Gamma(\sigma)][\xi(t_{\sigma,\epsilon})\mid\epsilon\in\edge(\sigma)][\coor(k)]$ generated by
	\begin{equation*}
	\begin{aligned}
		\Xi_{k}(\widehat{B},\theta)&\cup\biggl\{\sum_{i\in\theta\setminus\{k\}}\tfrac{X_{i}}{X_{k}}\prod_{\epsilon\in\edge(\sigma)\setminus\varepsilon}t_{\sigma,\epsilon}^{b_{\epsilon j}}\biggr\}\cup\{t_{\sigma,\epsilon}\mid\epsilon\in\varepsilon\}\cup\biggl\{\xi\bigl(\tfrac{X_{i}}{X_{k}}\bigr)-\prod_{\epsilon\in\edge(\sigma)\setminus\varepsilon}t_{\sigma,\epsilon}^{b_{\epsilon i}}\biggm|i\in\theta\biggr\}\\
		&\cup\biggl\{\xi(t_{\sigma,\epsilon})-\tfrac{1}{t_{\sigma,\epsilon}}\sum_{i\in\theta\setminus\{k\}}b_{\epsilon i}\tfrac{X_{i}}{X_{k}}\prod_{\epsilon'\in\edge(\sigma)\setminus\varepsilon}t_{\sigma,\epsilon'}^{b_{\epsilon i}}\biggm|\epsilon\in\edge(\sigma)\setminus\varepsilon\biggr\}.
	\end{aligned}
	\end{equation*}
	Then, the defining ideal $\mathfrak{I}(d_{k,\sigma}^{-1}(T^{*}_{D_{k,B}(\sigma,\varepsilon)}\afs^{n}_{k,\trv(\sigma)}))$ of the inverse image $d_{k,\sigma}^{-1}(T^{*}_{D_{k,B}(\sigma,\varepsilon)}\afs^{n}_{k,\trv(\sigma)})$ as a closed subset of $\afs^{n}_{k,\trv(\sigma)}\times_{\afs^{n}_{k}}T^{*}\afs^{n}_{k}$ is the extension of $\mathfrak{I}(T^{*}_{D_{k,B}(\sigma,\varepsilon)}\afs^{n}_{k,\trv(\sigma)})$ by the homomorphism of $\Lambda[\Gamma(\sigma)][\coor(k)]$-algebras
	\begin{equation*}
	\begin{aligned}
		\Lambda[\Gamma(\sigma)][\xi(t_{\sigma,\epsilon})\mid\epsilon\in\edge(\sigma)][\coor(k)]&\rightarrow\Lambda[\Gamma(\sigma)][\coor(k)]\\
		\xi(t_{\sigma,\epsilon})&\mapsto 0
	\end{aligned}
	\end{equation*}
	which is equal to the ideal of $\Lambda[\Gamma(\sigma)][\coor(k)]$ generated by
	\begin{equation*}
	\begin{aligned}
		\Xi_{k}(\widehat{B},\theta)&\cup\biggl\{\sum_{i\in\theta\setminus\{k\}}\tfrac{X_{i}}{X_{k}}\prod_{\epsilon\in\edge(\sigma)}t_{\sigma,\epsilon}^{b_{\epsilon j}}\biggr\}\cup\{t_{\sigma,\epsilon}\mid\epsilon\in\varepsilon\}\cup\biggl\{\xi\bigl(\tfrac{X_{i}}{X_{k}}\bigr)-\prod_{\epsilon\in\edge(\sigma)\setminus\varepsilon}t_{\sigma,\epsilon}^{b_{\epsilon i}}\biggm|i\in\theta\biggr\}\\
		&\cup\biggl\{\tfrac{1}{t_{\sigma,\epsilon}}\sum_{i\in\theta\setminus\{k\}}b_{\epsilon i}\tfrac{X_{i}}{X_{k}}\prod_{\epsilon'\in\edge(\sigma)\setminus\varepsilon}t_{\sigma,\epsilon'}^{b_{\epsilon i}}\biggm|\epsilon\in\edge(\sigma)\setminus\varepsilon\biggr\}.
	\end{aligned}
	\end{equation*}
	We next note that the defining ideal $\mathfrak{I}(\varpi_{k,\sigma}d_{k,\sigma}^{-1}(T^{*}_{D_{k,B}(\sigma,\varepsilon)}\afs^{n}_{k,\trv(\sigma)}))$ of the closure of the image $\varpi_{k,\sigma}d_{k,\sigma}^{-1}(T^{*}_{D_{k,B}(\sigma,\varepsilon)}\afs^{n}_{k,\trv(\sigma)})$ is the contraction of $\mathfrak{I}(d_{k,\sigma}^{-1}(T^{*}_{D_{k,B}(\sigma,\varepsilon)}\afs^{n}_{k,\trv(\sigma)}))$ by the embedding
	\begin{equation*}
		\Lambda[\coor(k)]\rightarrow\Lambda[\Gamma(\sigma)][\coor(k)].
	\end{equation*}
	We also note that the fact that
	\begin{equation*}
		\sum_{i\in\theta\setminus\{k\}}b_{\epsilon i}\tfrac{X_{i}}{X_{k}}\xi\bigl(\tfrac{X_{i}}{X_{k}}\bigr)=\sum_{i\in\theta\setminus\{k\}}b_{\epsilon i}\tfrac{X_{i}}{X_{k}}\left\{\xi\bigl(\tfrac{X_{i}}{X_{k}}\bigr)-\prod_{\epsilon\in\edge(\sigma)\setminus\varepsilon}t_{\sigma,\epsilon}^{b_{\epsilon i}}\right\}+\sum_{i\in\theta\setminus\{k\}}b_{\epsilon i}\tfrac{X_{i}}{X_{k}}\prod_{\epsilon'\in\edge(\sigma)\setminus\varepsilon}t_{\sigma,\epsilon'}^{b_{\epsilon i}}
	\end{equation*}
	implies that $\sum_{i\in\theta\setminus\{k\}}b_{\epsilon i}\tfrac{X_{i}}{X_{k}}\xi\bigl(\tfrac{X_{i}}{X_{k}}\bigr)\in\mathfrak{I}(d_{k,\sigma}^{-1}(T^{*}_{D_{k,B}(\sigma,\varepsilon)}\afs^{n}_{k,\trv(\sigma)}))$. Therefore, we know that $L_{k}(\widehat{B},\theta)\subset\mathfrak{I}(d_{k,\sigma}^{-1}(T^{*}_{D_{k,B}(\sigma,\varepsilon)}\afs^{n}_{k,\trv(\sigma)}))$. At the same time, if $\sum_{\substack{i\in\theta}}b_{\epsilon i}w_i=0$, then we have
	\begin{equation*}
		\sum_{\substack{i\in\theta\\w_i>0}}b_{\epsilon i}w_i=-\sum_{\substack{i\in\theta\\w_i<0}}b_{\epsilon i}w_i.
	\end{equation*}
	As a consequence, we know that $\prod_{\substack{i\in\theta\\w_i>0}}\left(\prod_{\epsilon'\in\edge(\sigma)\setminus\varepsilon}t_{\sigma,\epsilon'}^{b_{\epsilon i}}\right)^{w_i}=\prod_{\substack{i\in\theta\\w_i<0}}\left(\prod_{\epsilon'\in\edge(\sigma)\setminus\varepsilon}t_{\sigma,\epsilon'}^{b_{\epsilon i}}\right)^{-w_i}$. Then, we have
	\begin{align*}
		\prod_{\substack{i\in\theta\\w_i>0}}\xi\bigl(\tfrac{X_{i}}{X_{k}}\bigr)^{w_i}-\prod_{\substack{i\in\theta\\w_i<0}}\xi\bigl(\tfrac{X_{i}}{X_{k}}\bigr)^{-w_i}=&\left\{\prod_{\substack{i\in\theta\\w_i>0}}\xi\bigl(\tfrac{X_{i}}{X_{k}}\bigr)^{w_i}-\prod_{\substack{i\in\theta\\w_i>0}}\left(\prod_{\epsilon'\in\edge(\sigma)\setminus\varepsilon}t_{\sigma,\epsilon'}^{b_{\epsilon i}}\right)^{w_i}\right\}\\
		&-\left\{\prod_{\substack{i\in\theta\\w_i<0}}\xi\bigl(\tfrac{X_{i}}{X_{k}}\bigr)^{-w_i}-\prod_{\substack{i\in\theta\\w_i<0}}\left(\prod_{\epsilon'\in\edge(\sigma)\setminus\varepsilon}t_{\sigma,\epsilon'}^{b_{\epsilon i}}\right)^{-w_i}\right\}.
	\end{align*}
	Therefore, we know that $\prod_{\substack{i\in\theta\\w_i>0}}\xi\bigl(\tfrac{X_{i}}{X_{k}}\bigr)^{w_i}-\prod_{\substack{i\in\theta\\w_i<0}}\xi\bigl(\tfrac{X_{i}}{X_{k}}\bigr)^{-w_i}\in\mathfrak{I}(d_{k,\sigma}^{-1}(T^{*}_{D_{k,B}(\sigma,\varepsilon)}\afs^{n}_{k,\trv(\sigma)}))$ and hence $\lbox_{k}(\widehat{B},\theta)\subset\mathfrak{I}(d_{k,\sigma}^{-1}(T^{*}_{D_{k,B}(\sigma,\varepsilon)}\afs^{n}_{k,\trv(\sigma)}))$. Finally, we conclude that $L_{k}(\widehat{B},\theta)\cup\lbox_{k}(\widehat{B},\theta)\subset\mathfrak{I}(d_{k,\sigma}^{-1}(T^{*}_{D_{k,B}(\sigma,\varepsilon)}\afs^{n}_{k,\trv(\sigma)}))$ which implies that
	\begin{equation*}
		\mathfrak{S}(\widehat{B},\theta)\subset\mathfrak{I}(\varpi_{k,\sigma}d_{k,\sigma}^{-1}(T^{*}_{D_{k,B}(\sigma,\varepsilon)}\afs^{n}_{k,\trv(\sigma)})).
	\end{equation*}
	Therefore, the first assertion follows.
	
	Consider only the case where $1\leq k\leq n$ and replace $\Xi_{k}(\widehat{B},\theta)$ with $\Xi_{k}^{\infty}(\widehat{B},\theta)$. Repeat the above process, then the second assertion follows.
\end{proof}

\begin{cor}\label{image-direct-image-kummer-sheaf}
	Let $\Sigma$ be a generable set of cones in $\mathbb{Z}^{d}$ which is complete, and let $B=[b_{ij}]_{1\leq i\leq d;1\leq j\leq n}$ be a $\Sigma$-good matrix. If $\Lambda$ is a field, then we have
	\begin{equation*}
		\overline{\pi}_{\Sigma\circ	}N_{B}(\Sigma)\subset T^{*}_{\prs^{n}}\prs^{n}\cup T^{*}_{\infty}\prs^{n}\bigcup_{\substack{\varepsilon\in\mathcal{E}(\Sigma)\\\sigma\in\Sigma(\varepsilon)}}(\overline{S}(\widehat{B},\theta_{B}(\varepsilon\cap\edge(\sigma)))\cup S^{\infty}(\widehat{B},\theta_{B}(\varepsilon\cap\edge(\sigma)))).
	\end{equation*}
\end{cor}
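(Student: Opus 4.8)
The plan is to reduce the corollary to the two preceding lemmas by decomposing $N_{B}(\Sigma)$ into its four constituent families of conormal bundles and pushing each of them forward separately. The only structural fact needed beyond those lemmas is that $\overline{\pi}_{\Sigma\circ}$ distributes over finite unions.

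First I would record that for closed subsets $N_{1},N_{2}\subset T^{*}\prs^{n}_{\trv(\Sigma)}$ one has $\overline{\pi}_{\Sigma\circ}(N_{1}\cup N_{2})=\overline{\pi}_{\Sigma\circ}N_{1}\cup\overline{\pi}_{\Sigma\circ}N_{2}$: indeed $\overline{\pi}_{\Sigma\circ}N=\varpi_{\Sigma}(d_{\Sigma}^{-1}N)$ is the composite of the set-theoretic preimage along $d_{\Sigma}$ with the set-theoretic image along $\varpi_{\Sigma}$, and both of these operations commute with unions (the images stay closed because completeness of $\Sigma$ makes $\varpi_{\Sigma}$ proper). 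Since $\Sigma$ is a finite set of cones, $\mathcal{E}(\Sigma)$ is finite and the union defining $N_{B}(\Sigma)$ is finite, so $\overline{\pi}_{\Sigma\circ}N_{B}(\Sigma)$ is the union, over $\varepsilon\in\mathcal{E}(\Sigma)$, of the four images $\overline{\pi}_{\Sigma\circ}T^{*}_{\prs^{n}_{D(\Sigma,\varepsilon)}}\prs^{n}_{\trv(\Sigma)}$, $\overline{\pi}_{\Sigma\circ}T^{*}_{\infty_{D(\Sigma,\varepsilon)}}\prs^{n}_{\trv(\Sigma)}$, $\overline{\pi}_{\Sigma\circ}T^{*}_{D_{B}(\Sigma,\varepsilon)}\prs^{n}_{\trv(\Sigma)}$ and $\overline{\pi}_{\Sigma\circ}T^{*}_{D_{B}^{\infty}(\Sigma,\varepsilon)}\prs^{n}_{\trv(\Sigma)}$.

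Next I would bound each family. The hypotheses of the corollary ($\Sigma$ complete, $B$ being $\Sigma$-good, $\Lambda$ a field) are precisely those of \Cref{image-direct-image-easy} and \Cref{image-direct-image-hard}, so these apply verbatim. By \Cref{image-direct-image-easy} the first two images equal $T^{*}_{\prs^{n}}\prs^{n}$ and $T^{*}_{\infty}\prs^{n}$ for every $\varepsilon$; by \Cref{image-direct-image-hard} the third image is contained in $\bigcup_{\sigma\in\Sigma(\varepsilon)}\overline{S}(\widehat{B},\theta_{B}(\varepsilon\cap\edge(\sigma)))$ and the fourth in $\bigcup_{\sigma\in\Sigma(\varepsilon)}S^{\infty}(\widehat{B},\theta_{B}(\varepsilon\cap\edge(\sigma)))$. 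Substituting these four bounds and taking the union over $\varepsilon\in\mathcal{E}(\Sigma)$ yields exactly the asserted inclusion.

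In this argument essentially all of the mathematics lives in the two lemmas; the corollary is a bookkeeping consolidation. Consequently I expect no real obstacle: the only point worth checking carefully is the distributivity of $\overline{\pi}_{\Sigma\circ}$ over the finite union together with the finiteness of $\mathcal{E}(\Sigma)$, which ensures that passing to the image does not introduce components beyond those already controlled stratum by stratum. No dimension estimate enters at this stage — the bounds of \Cref{dimension-global-dimension} are needed only afterwards, when this set-theoretic inclusion is used to verify the hypothesis $\dim(\widetilde{\pi}_{A})_{\rsf\circ}SS(\cdots)\le n$ of \Cref{introduction-direct-image-formula}.
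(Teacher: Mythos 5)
Your proposal is correct and matches the paper's proof, which is the one-line observation that the corollary follows from \Cref{image-direct-image-easy} and \Cref{image-direct-image-hard}; you have simply made explicit the routine bookkeeping (finiteness of $\mathcal{E}(\Sigma)$ and distributivity of $\overline{\pi}_{\Sigma\circ}$ over finite unions) that the paper leaves implicit.
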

\begin{proof}
	The assertion follows from \Cref{image-direct-image-easy} and \Cref{image-direct-image-hard}.
\end{proof}

\subsection{\texorpdfstring{$p$}{p}-nondegenerate square matrices}\label{square}
We prove a formula for the characteristic cycle of a non-confluent $\ell$-adic GKZ hypergeometric sheaf satisfying the $p$-nondegeneracy condition. We also explain how to reduce a general square case which does not satisfy the condition we need to a case where it is satisfied.

\begin{dfn}
	Let $A=[a_{ij}]_{0\leq i\leq d;1\leq j\leq n}$ be a matrix whose entries are integers.
	\begin{enumerate}
		\item If there exists $B$ which is sub-non-confluent, such that $A=\widehat{B}$, then we say that $A$ is \textnormal{standard non-confluent}.
		\item If there exists $P\in\gl(d+1,\mathbb{Z})$, such that $PA$ is standard non-confluent, then we say that $A$ is \textnormal{non-confluent}.
		\item If for any $\theta\subset\{1,\dots, n\}$, we have $\rank A[\theta]=\rank_{\rsf}A[\theta]_{\rsf}$, then we say that $A$ is \textnormal{$p$-nondegenerate}.
	\end{enumerate}
\end{dfn}

\begin{dfn}
	Let $B=[a_{ij}]_{0\leq i\leq d;1\leq j\leq n}$ be a matrix whose entries are integers. If $\widehat{B}$ is $p$-nondegenerate, then we say that $B$ is \textnormal{sub-$p$-nondegenerate}.
\end{dfn}

Recall that $j:\afs^{n}\rightarrow\prs^{n}$ is the open immersion. For a closed subset $C\subset T^{*}\prs^{n}$, we denote by $j^{*}C=C\times_{T^{*}\prs^{n}}T^{*}\afs^{n}$ which is a closed subset of $T^{*}\afs^{n}$.

\begin{thm}\label{square-characteristic-cycle}
	Let $B=[b_{ij}]_{1\leq i\leq d;1\leq j\leq n}$ be a sub-non-confluent matrix. Let $\chi=(\chi_{0},\dots,\chi_{d}):\mu_{q-1}^{d+1}\rightarrow\ql^{*}$ be a multiplicative character where $\chi_{0}$ is nontrivial, and let $\psi:\rsf\rightarrow\ql^{*}$ be a nontrivial additive character. If $B$ is sub-$p$-nondegenerate and $\Lambda$ is a field, then we have
	\begin{equation*}
		CC\gkz_{\chi}(B)=(-1)^{d+n}j^{*}\overline{\pi}_{\Sigma!}[N_{B}(\Sigma)]\in Z_{n}(T^{*}\afs^{n})
	\end{equation*}
	where $\Sigma$ is any generable set such that $B$ is $\Sigma$-good.
\end{thm}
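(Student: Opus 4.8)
The plan is to realize $\gkz_{\chi}(B)$ as the restriction to $\afs^{n}$ of a proper pushforward from $\prs^{n}_{\trv(\Sigma)}$ and then invoke the direct image formula \Cref{introduction-direct-image-formula}. First I would arrange $\Sigma$ to be complete, as is needed for $\overline{\pi}_{\Sigma!}$ to be defined: by \Cref{resolution-existence} any generable set can be made $B$-good by a series of standard blow-ups, and these blow-ups preserve completeness, so there is a complete generable $\Sigma$ with $B$ $\Sigma$-good and $\trv(\Sigma)$ projective; since the left-hand side $CC\gkz_{\chi}(B)$ is intrinsic, proving the identity for one such $\Sigma$ yields it for every good $\Sigma$. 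With $\Sigma$ complete, $\overline{\pi}_{\Sigma}$ is proper, so $R\overline{\pi}_{\Sigma*}=R\overline{\pi}_{\Sigma!}$, and proper base change along the open immersion $j$ gives
\begin{equation*}
	j^{*}R\overline{\pi}_{\Sigma*}\overline{j}_{B,\Sigma!}\kum_{\chi}(B)\cong R\pi_{!}j_{B!}\kum_{\chi}(B)=\gkz_{\chi}(B),
\end{equation*}
where I use that $\overline{\pi}_{\Sigma}^{-1}(\afs^{n})=\afs^{n}_{\trv(\Sigma)}$, that the restriction of $\overline{j}_{B,\Sigma!}\kum_{\chi}(B)$ to this open is again the extension by zero of $\kum_{\chi}(B)$ from $U_{B}$, and that the projection $\afs^{n}_{\trv(\Sigma)}\to\afs^{n}$ precomposed with $U_{B}\hookrightarrow\afs^{n}_{\trv(\Sigma)}$ equals $\pi\circ j_{B}$.

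Next I would apply \Cref{introduction-direct-image-formula} to $\overline{\pi}_{\Sigma}:\prs^{n}_{\trv(\Sigma)}\to\prs^{n}$ and the sheaf $\overline{j}_{B,\Sigma!}\kum_{\chi}(B)$. Conditions (1) and (2) hold because $\prs^{n}$ is projective and, for complete $\Sigma$, $\overline{\pi}_{\Sigma}$ is projective and proper, both source and target being smooth. The content is condition (3), the bound $\dim\overline{\pi}_{\Sigma\circ}SS\,\overline{j}_{B,\Sigma!}\kum_{\chi}(B)\leq n$. By \Cref{image-kummer-sheaf} the singular support equals $N_{B}(\Sigma)$, and by \Cref{image-direct-image-kummer-sheaf},
\begin{equation*}
	\overline{\pi}_{\Sigma\circ}N_{B}(\Sigma)\subset T^{*}_{\prs^{n}}\prs^{n}\cup T^{*}_{\infty}\prs^{n}\bigcup_{\substack{\varepsilon\in\mathcal{E}(\Sigma)\\\sigma\in\Sigma(\varepsilon)}}(\overline{S}(\widehat{B},\theta_{B}(\varepsilon\cap\edge(\sigma)))\cup S^{\infty}(\widehat{B},\theta_{B}(\varepsilon\cap\edge(\sigma)))).
\end{equation*}

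Then I would bound each piece. The zero section $T^{*}_{\prs^{n}}\prs^{n}$ and $T^{*}_{\infty}\prs^{n}$ have dimension $n$. Since $B$ is sub-non-confluent, $\widehat{B}$ is non-confluent, so \Cref{dimension-global-dimension} applies to each (nonempty) $\overline{S}(\widehat{B},\theta)$ and $S^{\infty}(\widehat{B},\theta)$: over $\frf$ these have dimension exactly $n$, while over $\rsf$ they have dimension at most $n+\rank\widehat{B}[\theta]-\rank_{\rsf}\widehat{B}[\theta]_{\rsf}$. The sub-$p$-nondegeneracy of $B$ means precisely that $\widehat{B}$ is $p$-nondegenerate, so this defect vanishes and the dimension is $n$ in either case. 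Hence $\dim\overline{\pi}_{\Sigma\circ}N_{B}(\Sigma)\leq n$, which verifies condition (3) and simultaneously guarantees that $\overline{\pi}_{\Sigma!}$ is defined on $Z_{d+n}(N_{B}(\Sigma))$.

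Finally I would assemble the computation. \Cref{introduction-direct-image-formula} gives $CCR\overline{\pi}_{\Sigma*}\overline{j}_{B,\Sigma!}\kum_{\chi}(B)=\overline{\pi}_{\Sigma!}CC\,\overline{j}_{B,\Sigma!}\kum_{\chi}(B)$, and by \Cref{image-kummer-sheaf} the latter is $(-1)^{d+n}\overline{\pi}_{\Sigma!}[N_{B}(\Sigma)]$. Combining the isomorphism of the first paragraph with the compatibility of the characteristic cycle with restriction along the open immersion $j$, so that $CC\,j^{*}=j^{*}CC$, I obtain
\begin{equation*}
	CC\gkz_{\chi}(B)=j^{*}CCR\overline{\pi}_{\Sigma*}\overline{j}_{B,\Sigma!}\kum_{\chi}(B)=(-1)^{d+n}j^{*}\overline{\pi}_{\Sigma!}[N_{B}(\Sigma)],
\end{equation*}
as claimed. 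The genuinely hard analytic input, the dimension estimate forcing condition (3), is already isolated in \Cref{dimention-local-dimension} and \Cref{image-direct-image-hard}; within the present argument the main delicacy is the proper base change identification of the first paragraph and the check that the choice of a complete good $\Sigma$ is harmless, the substantive hypothesis being exactly sub-$p$-nondegeneracy, without which condition (3) can fail over $\rsf$.
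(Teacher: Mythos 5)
Your proposal is correct and follows essentially the same route as the paper's own proof: both realize $\gkz_{\chi}(B)$ as $j^{*}$ of the proper pushforward of $\overline{j}_{B,\Sigma!}\kum_{\chi}(B)$ along $\overline{\pi}_{\Sigma}$ for a complete $\Sigma$-good fan, verify the dimension condition $\dim\overline{\pi}_{\Sigma\circ}SS\,\overline{j}_{B,\Sigma!}\kum_{\chi}(B)\leq n$ via \Cref{image-kummer-sheaf}, \Cref{image-direct-image-kummer-sheaf} and \Cref{dimension-global-dimension} using sub-$p$-nondegeneracy, and then apply the direct image formula together with the computation of $CC\,\overline{j}_{B,\Sigma!}\kum_{\chi}(B)$ as $(-1)^{d+n}[N_{B}(\Sigma)]$. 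Your version merely spells out more explicitly the proper base change identification and the verification of the hypotheses of \Cref{introduction-direct-image-formula}, which the paper leaves implicit.
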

\begin{proof}
	First, by \Cref{resolution-existence}, there exists a generable set $\Sigma$ of cones in $\mathbb{Z}^{d}$ which is complete, such that $B$ is $\Sigma$-good. We note that
	\begin{equation*}
		\gkz_{\chi}(B)\cong j^*R\overline{\pi}_{\Sigma!}\overline{j}_{B,\Sigma!}\kum_{\chi}(B).
	\end{equation*}
	By \Cref{dimension-global-dimension}, \Cref{image-direct-image-kummer-sheaf} and \Cref{image-kummer-sheaf}.1 we have
	\begin{equation}\label{square-dimension-condition}
		\dim\overline{\pi}_{\Sigma\circ}SSj_{B,\Sigma!}\kum_{\chi}(B)\leq n.
	\end{equation}
	Therefore, by \cite[Lemma~5.13.3]{saito2017characteristic}, \cite[Lemma~5.11.2]{saito2017characteristic} and \cite[Theorem~2.2.5]{saito2021characteristic}, we have
	\begin{equation*}
		CC\gkz_{\chi}(B)=j^{*}\overline{\pi}_{\Sigma!}CC\overline{j}_{B,\Sigma!}\kum_{\chi}(B)
	\end{equation*}
	since \cref{square-dimension-condition} holds. Then, the assertion follows from \Cref{image-kummer-sheaf}.2 In fact, the assertion can be proved by choosing any generable set $\Sigma$ which is complete meeting the requirement that $B$ is $\Sigma$-good, and the results does not depend on the choice of such $\Sigma$.
\end{proof}

\begin{lem}\label{square-p-nondegenerate}
	Let $A=[a_{ij}]_{0\leq i\leq d;1\leq j\leq n}$ be a matrix of rank $d+1$ whose entries are integers. Assume that $A$ is not $p$-nondegenerate. Set $F=\diag\{p,1,\dots,1\}\in\mat(d+1,\mathbb{Z})$. If $n=d+1$, then there exist $P_{1},\dots,P_{r}\in\gl(d+1,\mathbb{Z})$, such that $F^{-1}P_{r}\dots F^{-1}P_{1}A\in\mat(d+1,\mathbb{Z})$ and it is $p$-nondegenerate.
\end{lem}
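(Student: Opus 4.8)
The plan is to induct on the $p$-adic valuation $r=v_{p}(\det A)$, using the operation $A\mapsto F^{-1}PA$ to lower it by exactly one at each step while keeping the matrix integral, until the determinant becomes a $p$-adic unit. The starting observation, which fixes the target of the reduction, is that for a square matrix of full rank $d+1$ the failure of $p$-nondegeneracy is equivalent to $v_{p}(\det A)>0$. Indeed, taking $\theta=\{1,\dots,n\}$ shows that if $A$ is not $p$-nondegenerate then $\det A_{\rsf}=0$, so $v_{p}(\det A)>0$. Conversely, and this is the crucial point, if $\det A$ is a $p$-adic unit then $A_{\rsf}\in\gl(d+1,\rsf)$, so its columns are linearly independent over $\rsf$; hence for every $\theta$ the reduced submatrix $A[\theta]_{\rsf}$ has full column rank, giving $\rank_{\rsf}A[\theta]_{\rsf}=\#\theta\geq\rank A[\theta]\geq\rank_{\rsf}A[\theta]_{\rsf}$, where the last inequality holds because a nonvanishing minor mod $p$ lifts to a nonzero integer minor. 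Thus $v_{p}(\det A)=0$ already forces $A$ to be $p$-nondegenerate, and it suffices to reach a $p$-adic unit determinant.

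For the inductive step, suppose $v_{p}(\det A)=r\geq 1$, so $A_{\rsf}$ is singular and its left kernel contains a nonzero row vector with coordinates in $\mathbb{F}_{p}\subseteq\rsf$. First I would lift this vector coordinatewise to an integer row vector and divide by the $\gcd$ of its coordinates; since its reduction is nonzero this $\gcd$ is prime to $p$, so the resulting \emph{primitive} integer vector $\mathbf{p}_{1}$ still reduces to a nonzero element of the left kernel of $A_{\rsf}$. A primitive vector completes to a $\mathbb{Z}$-basis, hence is the first row of some $P_{1}\in\gl(d+1,\mathbb{Z})$. The first row of $P_{1}A$ is $\mathbf{p}_{1}A\equiv 0\pmod{p}$, so $F^{-1}P_{1}A$, which merely divides that first row by $p$ and leaves the others unchanged, lies in $\mat(d+1,\mathbb{Z})$; moreover $\det(F^{-1}P_{1}A)=\pm p^{-1}\det A$, so $v_{p}(\det(F^{-1}P_{1}A))=r-1$.

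Applying the induction hypothesis to the integral matrix $A'=F^{-1}P_{1}A$ with $v_{p}(\det A')=r-1$ then produces $P_{2},\dots,P_{r}\in\gl(d+1,\mathbb{Z})$ for which $F^{-1}P_{r}\dotsm F^{-1}P_{2}A'$ is integral with $p$-adic unit determinant, hence $p$-nondegenerate by the first paragraph; this is the required $F^{-1}P_{r}\dotsm F^{-1}P_{1}A$, the base case $r=0$ being the empty product. The hard part is the first paragraph: one must exploit that $A$ is square of full rank so that a unit determinant controls \emph{all} column submatrices simultaneously, which is what collapses $p$-nondegeneracy to the single invariant $v_{p}(\det A)$. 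The only other point requiring care is the integrality bookkeeping in the inductive step, namely choosing the first row of $P_{1}$ to be a primitive vector in the mod-$p$ left kernel so that $F^{-1}P_{1}A$ remains integral; this is always achievable precisely because such kernel vectors can be made primitive after dividing out a $\gcd$ that is coprime to $p$.
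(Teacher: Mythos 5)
Your proposal is correct and follows essentially the same route as the paper: both characterize $p$-nondegeneracy of a full-rank square integer matrix by the single invariant $\mathrm{val}_{p}(\det A)=0$ and then induct on this valuation, at each step choosing $P_{i}\in\gl(d+1,\mathbb{Z})$ whose first row reduces to a nonzero left-kernel vector of $A_{\rsf}$ so that $F^{-1}P_{i}A$ stays integral while the valuation drops by exactly one. Your write-up merely supplies details the paper leaves implicit, namely the converse direction of the equivalence and the primitive-vector construction of $P_{1}$.
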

\begin{proof}
	We note that $A$ is $p$-nondegenerate if and only if the $p$-adic valuation $\mathrm{val}_{p}(\det A)=0$. Then, we argue by induction on $\mathrm{val}_{p}(\det A)$. If $\mathrm{val}_{p}(\det A)=1$, then there exists $P\in\gl(d+1,\mathbb{Z})$, such that $a'_{01}\equiv\dots\equiv a'_{0n}\equiv 0\pmod{p}$ where we write $PA=[a'_{ij}]_{0\leq i\leq d;1\leq j\leq d+1}$, which implies that $F^{-1}PA\in\mat(d+1,\mathbb{Z})$. We note that $\det F^{-1}PA=p^{-1}\det A$, and hence
	\begin{equation*}
		\mathrm{val}_{p}(\det F^{-1}PA)=\mathrm{val}_{p}(\det A)-1=0.
	\end{equation*}
	Next, we assume that the argument is true for $1\leq\mathrm{val}_{p}(\det A)\leq\mathrm{val}$, then we prove that it is also true for $\mathrm{val}_{p}(\det A)=\mathrm{val}+1$. Again, there exists $P_{1}\in\gl(d+1,\mathbb{Z})$, such that $F^{-1}P_{1}A\in\mat(d+1,\mathbb{Z})$ and $\mathrm{val}_{p}(\det F^{-1}P_{1}A)=\mathrm{val}_{p}(\det A)-1$. By the induction hypothesis, if $\mathrm{val}_{p}(\det F^{-1}P_{1}A)>0$, then there exist $P_{2},\dots,P_{r}\in\gl(d+1,\mathbb{Z})$, such that $F^{-1}P_{r}\dots F^{-1}P_{1}A\in\mat(d+1,\mathbb{Z})$ and $\mathrm{val}_{p}(\det F^{-1}P_{r}\dots F^{-1}P_{1}A)=0$. Therefore $P_{1},\dots,P_{r}$ are the required matrices.
\end{proof}

Assume that $n=d+1$. For a matrix $M=[m_{ij}]_{0\leq i\leq d;1\leq j\leq d+1}\in\mat(d+1,\mathbb{Z})$, we define $\varphi_{M}:\trs^{d+1}\rightarrow\trs^{d+1}$ to be the endomorphism induced by the ring endomorphism
\begin{equation*}
	\Lambda[t_{0}^{\pm 1},\dots,t_{d}^{\pm 1}]\rightarrow\Lambda[t_{0}^{\pm 1},\dots,t_{d}^{\pm 1}]:t_{j}\mapsto\prod_{i=0}^{d}t_{i}^{m_{ij}}
\end{equation*}
For a multiplicative character $\chi=(\chi_{0},\dots,\chi_{d}):\mu_{q-1}^{d+1}\rightarrow\ql^{*}$, we denote by
\begin{equation*}
	\chi^{M}=(\chi_{0}^{m_{00}}\dots\chi_{d}^{m_{0d}},\dots,\chi_{0}^{m_{d0}}\dots\chi_{d}^{m_{dd}}).
\end{equation*}

\begin{lem}\label{square-sheaf-isomorphism}
	Let $A=[a_{ij}]_{0\leq i\leq d;1\leq j\leq n}$ be a non-confluent matrix. Let $\chi=(\chi_{0},\dots,\chi_{d}):\mu_{q-1}^{d+1}\rightarrow\ql^{*}$ be a multiplicative character, and let $\psi:\rsf\rightarrow\ql^{*}$ be a nontrivial additive character. If $n=d+1$, then the following statements are true.
	\begin{enumerate}
		\item For $P\in\gl(d+1,\mathbb{Z})$, we have $\hyp_{\psi}(A,\chi)\cong\hyp_{\psi}(PA,\chi^{P})$.
		\item Set $F=\diag\{p,1,\dots,1\}\in\mat(d+1,\mathbb{Z})$, then we have $\hyp_{\psi}(A,\chi)\cong\hyp_{\psi}(FA,\chi)$.
	\end{enumerate}
\end{lem}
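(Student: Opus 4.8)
The plan is to derive both isomorphisms from a single change of variables on the torus factor, namely the monomial morphism $\varphi_{M}$ and the product $\Phi_{M}=\varphi_{M}\times\mathrm{id}_{\afs^{n}}\colon\afs^{n}_{\trs^{d+1}}\to\afs^{n}_{\trs^{d+1}}$, applied with $M=P$ for the first assertion and $M=F$ for the second. Three compatibilities will do the work. First, $\Phi_{M}$ lies over $\afs^{n}$, so $\widehat\pi\circ\Phi_{M}=\widehat\pi$. Second, a direct computation on the generator $y$ gives $\Phi_{M}^{*}\widehat g_{A}^{*}(y)=\sum_{j}x_{j}\prod_{i}t_{i}^{(MA)_{ij}}=\widehat g_{MA}^{*}(y)$, whence $\widehat g_{A}\circ\Phi_{M}=\widehat g_{MA}$ and $\Phi_{M}^{*}\widehat g_{A}^{*}\mathscr{L}_{\psi}=\widehat g_{MA}^{*}\mathscr{L}_{\psi}$. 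Third, since $\widehat\tau\circ\Phi_{M}=\varphi_{M}\circ\widehat\tau$, the multiplicativity and the power-map functoriality of Kummer sheaves yield $\varphi_{M}^{*}(\widehat\tau_{0}^{*}\kum_{\chi_{0}}\otimes\dots\otimes\widehat\tau_{d}^{*}\kum_{\chi_{d}})\cong\widehat\tau_{0}^{*}\kum_{(\chi^{M})_{0}}\otimes\dots\otimes\widehat\tau_{d}^{*}\kum_{(\chi^{M})_{d}}$, where $(\chi^{M})_{i}=\prod_{j}\chi_{j}^{m_{ij}}$ matches the definition of $\chi^{M}$.

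For part~1, since $P\in\gl(d+1,\mathbb{Z})$ has $\det P=\pm1$, the map $\varphi_{P}$ is an automorphism of $\trs^{d+1}$, so $\Phi_{P}$ is an automorphism of $\afs^{n}_{\trs^{d+1}}$ over $\afs^{n}$. Then $R\Phi_{P!}\Phi_{P}^{*}\cong\mathrm{id}$ together with $\widehat\pi\circ\Phi_{P}=\widehat\pi$ gives $R\widehat\pi_{!}\cong R\widehat\pi_{!}\circ\Phi_{P}^{*}$. Feeding the defining complex of $\hyp_{\psi}(A,\chi)$ through this and applying the three compatibilities with $M=P$, I obtain
\[
\hyp_{\psi}(A,\chi)\cong R\widehat\pi_{!}\Phi_{P}^{*}\bigl(\widehat\tau^{*}(\widehat\tau_{0}^{*}\kum_{\chi_{0}}\otimes\dots\otimes\widehat\tau_{d}^{*}\kum_{\chi_{d}})\otimes\widehat g_{A}^{*}\mathscr{L}_{\psi}\bigr)[d+n+1]\cong\hyp_{\psi}(PA,\chi^{P}),
\]
which is the first assertion.

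For part~2 the automorphism argument breaks down, since $\varphi_{F}$ is no longer invertible, and this is the main obstacle. The crucial observation is that over the characteristic-$p$ field $\rsf$ the morphism $\varphi_{F}$ is the $p$-power map on $t_{0}$ and the identity on $t_{1},\dots,t_{d}$, hence finite and purely inseparable, i.e.\ a universal homeomorphism; so is $\Phi_{F}$. By topological invariance of the étale site, $\Phi_{F}^{*}$ is an equivalence with $R\Phi_{F!}\Phi_{F}^{*}\cong\mathrm{id}$, and therefore $R\widehat\pi_{!}\cong R\widehat\pi_{!}\circ\Phi_{F}^{*}$ survives unchanged; the function-side identity $\widehat g_{A}\circ\Phi_{F}=\widehat g_{FA}$ also holds verbatim. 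What requires care—and is the heart of the matter—is the Kummer side: power-map functoriality computes $\varphi_{F}^{*}\kum_{\chi_{0}}\cong\kum_{\chi_{0}^{p}}$, so one must control the twist $\chi_{0}\mapsto\chi_{0}^{p}$ produced by the inseparable map and reconcile it, through the universal-homeomorphism equivalence $\Phi_{F}^{*}$, with the Kummer datum for $\chi$ appearing in $\hyp_{\psi}(FA,\chi)$. This reconciliation is exactly where the characteristic-$p$ structure is used essentially; once it is in place, assembling the three compatibilities as in part~1 yields the isomorphism $\hyp_{\psi}(A,\chi)\cong\hyp_{\psi}(FA,\chi)$.
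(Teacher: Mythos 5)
Part~1 of your argument is correct and is essentially the paper's proof: the paper packages the cancellation $R\widehat{\pi}_{!}\circ\Phi_{P}^{*}\cong R\widehat{\pi}_{!}$ via the projection formula applied to a commutative diagram containing $\varphi_{P}$, but the three compatibilities you list ($\widehat{\pi}\circ\Phi_{P}=\widehat{\pi}$, $\widehat{g}_{A}\circ\Phi_{P}=\widehat{g}_{PA}$, and $\varphi_{P}^{*}\bigl(\bigotimes_{j}\widehat{\tau}_{j}^{*}\kum_{\chi_{j}}\bigr)\cong\bigotimes_{i}\widehat{\tau}_{i}^{*}\kum_{(\chi^{P})_{i}}$) together with the invertibility of $\varphi_{P}$ are exactly the content of that proof.

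Part~2, however, has a genuine gap. Everything you establish there --- $\Phi_{F}$ is a universal homeomorphism, $R\Phi_{F!}\Phi_{F}^{*}\cong\mathrm{id}$, $\widehat{g}_{A}\circ\Phi_{F}=\widehat{g}_{FA}$, and $\varphi_{F}^{*}\kum_{\chi_{0}}\cong\kum_{\chi_{0}^{p}}$ --- proves only the formal analogue of part~1, namely $\hyp_{\psi}(A,\chi)\cong\hyp_{\psi}(FA,\chi^{F})$ with $\chi^{F}=(\chi_{0}^{p},\chi_{1},\dots,\chi_{d})$. The assertion to be proved is $\hyp_{\psi}(A,\chi)\cong\hyp_{\psi}(FA,\chi)$, so the entire non-formal content of part~2 is the passage from the Kummer datum $\chi^{F}$ back to $\chi$, i.e.\ an isomorphism $\kum_{\chi_{0}^{p}}\cong\kum_{\chi_{0}}$. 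You correctly isolate this as ``the heart of the matter'' and then write that ``once it is in place'' the result follows --- but you never produce the reconciliation, so the proof stops precisely at the step that carries the content of the statement. The paper closes this step with the explicit claim $\kum_{\chi_{i}^{F}}\cong\kum_{\chi_{i}}$ for all $i$, which for $i=0$ is the assertion $\kum_{\chi_{0}^{p}}\cong\kum_{\chi_{0}}$. Be aware that this is not something the universal-homeomorphism formalism gives you for free: since two Kummer sheaves $\kum_{\rho}$ and $\kum_{\rho'}$ on $\mathbb{G}_{m}$ are isomorphic only when $\rho=\rho'$ as characters of $\mu_{q-1}$, the needed isomorphism amounts to $\chi_{0}^{p}=\chi_{0}$, which is automatic when $q=p$ (every character of $\mu_{p-1}$ satisfies $\rho^{p-1}=1$) but is a real constraint for $q=p^{f}$ with $f>1$. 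So the missing step is not routine bookkeeping; whatever argument you supply must engage with this arithmetic of $\chi_{0}$ rather than with the topological invariance of the \'etale site.
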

\begin{proof}
	\begin{enumerate}
		\item First, we consider the following commutative diagram over $\rsf$.
		\begin{equation*}
		\begin{tikzcd}
			\trs^{d+1}&\afs^{n}_{\trs(\widehat{\delta})}\arrow[d,"\varphi_{P}"]\arrow[l,"\widehat{\tau}"']\arrow[r,"\widehat{\pi}"]\arrow[dd,near end,bend right=60,"\widehat{g}_{PA}"']&\afs^{n}\\
			&\afs^{n}_{\trs^{d+1}}\arrow[d,"\widehat{g}_{A}"]\arrow[ul,crossing over,near end,bend left,"\widehat{\tau}"]\arrow[ur,near end,bend right,"\widehat{\pi}"']&\\
			&\afs^{1}_{\dagger}&
		\end{tikzcd}.
		\end{equation*}
		We note that $\varphi_{P}^{*}(\widehat{\tau}^{*}(\widehat{\tau}_{0}^{*}\mathscr{K}_{\chi_{0}}\otimes\dots\otimes\widehat{\tau}_{d}^{*}\mathscr{K}_{\chi_{d}}))\cong\widehat{\tau}^{*}(\widehat{\tau}_{0}^{*}\mathscr{K}_{\chi_{0}^{P}}\otimes\dots\otimes\widehat{\tau}_{d}^{*}\mathscr{K}_{\chi_{d}^{P}})$. Then, the assertion follows from the projection formula.
		
		\item We note that $\varphi_{F}$ is a partial Frobenius morphism which is a universal homeomorphism and that $\mathscr{K}_{\chi_{i}^{F}}\cong\mathscr{K}_{\chi_{i}}$ for $0\leq i\leq d$. Then, perform the same process as above, the assertion follows.\qedhere
	\end{enumerate}
\end{proof}

\begin{thm}\label{square-p-nondegerate-non-confluent}
	Let $A=[a_{ij}]_{0\leq i\leq d;1\leq j\leq n}$ be a non-confluent matrix. Let $\chi=(\chi_{0},\dots,\chi_{d}):\mu_{q-1}^{d+1}\rightarrow\ql^{*}$ be a multiplicative character, and let $\psi:\rsf\rightarrow\ql^{*}$ be a non-trivial additive character. If $n=d+1$, then there exist a sub-non-confluent matrix $B=[b_{ij}]_{0\leq i\leq d;1\leq j\leq n}$ which is sub-$p$-nondegenerate and a multiplicative character $\widehat{\chi}=(\widehat{\chi}_{0},\dots,\widehat{\chi}_{d}):\mu_{q-1}^{d+1}\rightarrow\ql^{*}$, such that
	\begin{equation*}
		\hyp_{\psi}(A,\chi)\cong\hyp_{\psi}(\widehat{B},\widehat{\chi}).
	\end{equation*}
\end{thm}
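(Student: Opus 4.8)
The plan is to move $A$, through the sheaf-preserving operations of \Cref{square-sheaf-isomorphism}, to a matrix that is simultaneously standard non-confluent and $p$-nondegenerate; reading off its rows below the top then gives the desired $B$. Two bookkeeping facts organize the whole argument. First, left multiplication by any $P\in\gl(d+1,\mathbb{Z})$ preserves both properties in play: the integral row lattice $L(A)\subseteq\mathbb{Z}^{n}$ spanned by the rows of $A$ is unchanged, and for every $\theta\subseteq\{1,\dots,n\}$ one has $\rank(PA)[\theta]=\rank A[\theta]$ while $\rank_{\rsf}(PA)[\theta]_{\rsf}=\rank_{\rsf}A[\theta]_{\rsf}$, the latter because $\det P=\pm1$ forces $P_{\rsf}\in\gl(d+1,\rsf)$. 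Second, $A$ is non-confluent precisely when $(1,\dots,1)\in L(A)$: the vector $(1,\dots,1)$ is primitive in $\mathbb{Z}^{n}$, hence primitive in any sublattice of $\mathbb{Z}^{n}$ containing it, and a primitive vector of the free module $L(A)$ extends to a $\mathbb{Z}$-basis, realized by a $P\in\gl(d+1,\mathbb{Z})$ for which the top row of $PA$ is $(1,\dots,1)$.

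I would first dispose of the case where $A$ is $p$-nondegenerate. Since $A$ is non-confluent, I pick $Q\in\gl(d+1,\mathbb{Z})$ with $QA$ standard non-confluent, say $QA=\widehat{B}$; by the first fact $\widehat{B}$ remains $p$-nondegenerate, so $B$ is sub-non-confluent and sub-$p$-nondegenerate, and \Cref{square-sheaf-isomorphism}.1 gives $\hyp_{\psi}(A,\chi)\cong\hyp_{\psi}(\widehat{B},\chi^{Q})$, finishing this case with $\widehat{\chi}=\chi^{Q}$. If $A$ is not $p$-nondegenerate, I invoke \Cref{square-p-nondegenerate} (applicable since $A$ has rank $d+1$) to produce $P_{1},\dots,P_{r}\in\gl(d+1,\mathbb{Z})$ with $A'=F^{-1}P_{r}\cdots F^{-1}P_{1}A\in\mat(d+1,\mathbb{Z})$ and $p$-nondegenerate, where $F=\diag\{p,1,\dots,1\}$. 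Feeding the partial products into \Cref{square-sheaf-isomorphism} one factor at a time---part~1 for each $P_{i}$ (replacing the character by its $P_{i}$-transform) and part~2 for each $F^{-1}$, read as $\hyp_{\psi}(F^{-1}N,\cdot)\cong\hyp_{\psi}(N,\cdot)$ for the current matrix $N$---yields $\hyp_{\psi}(A,\chi)\cong\hyp_{\psi}(A',\chi')$ for a suitable character $\chi'$. As $A'$ is now $p$-nondegenerate, the previous case applied to $A'$ delivers the required $B$ and $\widehat{\chi}$.

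The step needing care is that non-confluence must persist along the entire chain producing $A'$: it is required both to license each use of \Cref{square-sheaf-isomorphism}, whose standing hypothesis is non-confluence of the current matrix, and to apply the first case to $A'$ at the end. The lattice criterion settles this cleanly. Each $P_{i}$ fixes the row lattice, and passing from an integral matrix $M$ to an integral $F^{-1}M$ only enlarges it, $L(M)\subseteq L(F^{-1}M)$, since the new top row $\tfrac1p m_{0}$ recovers the old one via $m_{0}=p\cdot\tfrac1p m_{0}$. Hence $(1,\dots,1)\in L(A)$ propagates to every partial product, so all intermediate matrices and $A'$ itself are non-confluent. The only remaining point is to confirm that $F^{-1}N$ is integral at each stage, so that part~2 of \Cref{square-sheaf-isomorphism} genuinely applies; this is exactly what the inductive construction inside the proof of \Cref{square-p-nondegenerate} supplies.
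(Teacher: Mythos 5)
Your proposal is correct and follows essentially the same route as the paper: reduce to a $p$-nondegenerate matrix via \Cref{square-p-nondegenerate}, track the sheaf through each factor with \Cref{square-sheaf-isomorphism}, observe that non-confluence survives the chain, and finish by normalizing to standard non-confluent form. The only difference is that you supply an explicit justification (the row-lattice criterion $(1,\dots,1)\in L(A)$ and the inclusion $L(M)\subseteq L(F^{-1}M)$) for the preservation of non-confluence, which the paper merely asserts.
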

\begin{proof}
	Set $F=\diag\{p,1,\dots,1\}\in\mat(d+1,\mathbb{Z})$. By \Cref{square-p-nondegenerate}, there exist $P_{1},\dots,P_{r}\in\gl(d+1,\mathbb{Z})$, such that $F^{-1}P_{r}\dots F^{-1}P_{1}A\in\mat(d+1,\mathbb{Z})$ and it is $p$-nondegenerate. We note that if $A$ is non-confluent, then so is $F^{-1}P_{r}\dots F^{-1}P_{1}A$, which implies the existence of matrices $B$ and $P\in\gl(d+1,\mathbb{Z})$, such that $PF^{-1}P_{r}\dots F^{-1}P_{1}A=\widehat{B}$. Let $\widehat{\chi}=\chi^{P_{1}\dots P_{r}}$. Then, by \Cref{square-sheaf-isomorphism}, we have $\hyp_{\psi}(A,\chi)\cong\hyp_{\psi}(\widehat{B},\widehat{\chi})$.
\end{proof}

\section{Comparison theorems of characteristic cycles}\label{comparison}
We formulate comparison theorems among different types of characteristic cycles. We use the specialization map to explain the relationship between the characteristic cycle of an $\ell$-adic GKZ-type sheaf over a finite extension of $\mathbb{Q}$ and the one of a non-confluent as well as $p$-nondegenerate $\ell$-adic GKZ hypergeometric sheaf in \Cref{specialization}, and we use Riemann-Hilbert correspondence to explain the relationship between an $\ell$-adic GKZ-type sheaf and the corresponding regular GKZ hypergeometric $\mathcal{D}$-module.
\subsection{Comparison via specialization maps}\label{specialization}

We refer to \cite[20.1]{fulton2016intersection} for the definition of relative Chow groups, and refer to \cite[20.3]{fulton2016intersection} for the definition of specialization maps. For a scheme $Y$ over $\intr$, we write $Y_{\Lambda}=Y\times_{\spec\intr}\spec\Lambda$. We denote by $\spmap:CH_{i}(Y_{\frf}/\frf)\rightarrow CH_{i}(Y_{\rsf}/\rsf)$ the specialization map. By \cite[20.3]{fulton2016intersection}, the specialization map is induced by a map $\spmap:Z_{i}(Y_{\frf}/\frf)\rightarrow Z_{i}(Y_{\rsf}/\rsf)$.

\begin{thm}\label{specialization-comparison}
	Let $B=[b_{ij}]_{1\leq i\leq d;1\leq j\leq n}$ be a matrix whose entries are integers, and let $\chi=(\chi_{0},\dots,\chi_{d}):\mu_{q-1}^{d+1}\rightarrow\ql^{*}$ be a multiplicative character. If $B$ is sub-$p$-nondegenerate, then
	\begin{equation*}
		\spmap(CC\gkz_{\chi}(B))=CC\gkz_{\chi}(B)\in Z_{n}(T^{*}\afs^{n}/\rsf)
	\end{equation*}
	where we use $\gkz_{\chi}(B)$ to denote the $\ell$-adic GKZ-type sheaf over both $K$ and $\kappa$ respectively.
\end{thm}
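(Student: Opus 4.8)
The plan is to reduce both characteristic cycles to the explicit geometric formula of \Cref{square-characteristic-cycle} and then to exploit that this formula is assembled from data defined already over $\intr$. Fix, by \Cref{resolution-existence}, a complete generable set $\Sigma$ of cones in $\mathbb{Z}^{d}$ for which $B$ is $\Sigma$-good. The fan $\Sigma$, the toric variety $\trv(\Sigma)$, the divisor $\overline{D}_{B}(\Sigma)$, the projection $\overline{\pi}_{\Sigma}$, and the conical set $N_{B}(\Sigma)$ are all defined over $\intr$, and each is compatible with the base changes $\spec\frf\rightarrow\spec\intr$ and $\spec\rsf\rightarrow\spec\intr$. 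Applying \Cref{square-characteristic-cycle} once over $\frf$ and once over $\rsf$ yields
\begin{equation*}
	CC\gkz_{\chi}(B)_{\frf}=(-1)^{d+n}j^{*}\overline{\pi}_{\Sigma!}[N_{B}(\Sigma)_{\frf}],\qquad CC\gkz_{\chi}(B)_{\rsf}=(-1)^{d+n}j^{*}\overline{\pi}_{\Sigma!}[N_{B}(\Sigma)_{\rsf}].
\end{equation*}
It is precisely the sub-$p$-nondegeneracy of $B$ that makes the second equality available: by \Cref{dimension-global-dimension} and \Cref{image-direct-image-kummer-sheaf} the rank defect vanishes, so $\dim\overline{\pi}_{\Sigma\circ}N_{B}(\Sigma)_{\rsf}\leq n$ and $\overline{\pi}_{\Sigma!}$ is a genuine map of cycle groups $Z_{d+n}\rightarrow Z_{n}$ over $\rsf$, not merely of Chow groups.

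With both formulas in hand, the theorem reduces to the single identity
\begin{equation*}
	\spmap\bigl(j^{*}\overline{\pi}_{\Sigma!}[N_{B}(\Sigma)_{\frf}]\bigr)=j^{*}\overline{\pi}_{\Sigma!}[N_{B}(\Sigma)_{\rsf}].
\end{equation*}
First I would move $\spmap$ past the two operations on the right. The morphism $j$ is an open immersion, so $j^{*}$ is a flat pull-back; and $\overline{\pi}_{\Sigma!}=\varpi_{\Sigma*}\circ d_{\Sigma}^{!}$ is a proper push-forward composed with the Gysin homomorphism of the closed immersion $d_{\Sigma}$ (closed because $\overline{\pi}_{\Sigma}$ is smooth). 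By the compatibility of the specialization map with proper push-forward, with flat pull-back, and with refined Gysin homomorphisms \cite[20.3]{fulton2016intersection}, together with the dimension bound secured above that keeps every term in the expected top degree, $\spmap$ commutes with $j^{*}$ and with $\overline{\pi}_{\Sigma!}$ at the level of cycles.

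It then remains to prove $\spmap[N_{B}(\Sigma)_{\frf}]=[N_{B}(\Sigma)_{\rsf}]$. Since $B$ is $\Sigma$-good, the normalizing automorphism used in the proof of \Cref{divisor-good-is-normal} is given by monomials with non-negative integer exponents and unit coefficients, hence is an automorphism already over $\intr$; locally on each chart $\prs^{n}_{\trv(\sigma)}$ it carries $\overline{D}_{B}(\Sigma)$ to the toric divisor cut out by $X_{0}X_{1}\prod_{\epsilon}t_{\sigma,\epsilon}$, a relative simple normal crossings divisor over $\intr$. Thus each irreducible component of $\overline{D}_{B}(\Sigma)$ is smooth over $\intr$, so every conormal bundle appearing in $N_{B}(\Sigma)$ is a vector bundle over an $\intr$-smooth base and is therefore flat over $\intr$ with reduced special fibre equal to the corresponding component of $N_{B}(\Sigma)_{\rsf}$. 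Because the cycle-level specialization of an integral subvariety of the generic fibre is computed by taking its closure over $\intr$ and restricting to the special fibre, this flatness and reducedness give $\spmap[N_{B}(\Sigma)_{\frf}]=[N_{B}(\Sigma)_{\rsf}]$, all multiplicities being one. Combining the two steps with the two displayed formulas finishes the argument.

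The step I expect to be the main obstacle is making the commutation with $\overline{\pi}_{\Sigma!}$ rigorous at the level of cycles rather than of rational-equivalence classes: Fulton's compatibilities are stated for Chow groups, so one must use the dimension estimate from sub-$p$-nondegeneracy to confirm that both $\overline{\pi}_{\Sigma!}[N_{B}(\Sigma)]$ and its specialization remain honest cycles of dimension $n$, with no drop in dimension hiding a discrepancy. The accompanying identity $\spmap[N_{B}(\Sigma)_{\frf}]=[N_{B}(\Sigma)_{\rsf}]$ is comparatively routine once the integral simple normal crossings model of $\overline{D}_{B}(\Sigma)$ is in place, but it is exactly the point at which $\Sigma$-goodness is indispensable.
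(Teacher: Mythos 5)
Your proposal is correct and follows essentially the same route as the paper: reduce via \Cref{square-characteristic-cycle} to the identity $\spmap(j^{*}\overline{\pi}_{\Sigma!}[N_{B}(\Sigma)])=j^{*}\overline{\pi}_{\Sigma!}[N_{B}(\Sigma)]$, commute $\spmap$ with $j^{*}$ and $\overline{\pi}_{\Sigma!}$ by \cite[Proposition~20.3]{fulton2016intersection}, use the dimension bound from \Cref{dimension-global-dimension} and \Cref{image-direct-image-kummer-sheaf} to upgrade the Chow-group equality to one of cycles, and deduce $\spmap[N_{B}(\Sigma)]=[N_{B}(\Sigma)]$ from the smoothness over $\intr$ of the strata of the simple normal crossings divisor $\overline{D}_{B}(\Sigma)$. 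Your explicit flatness-and-reducedness justification of this last step is exactly the content of the argument the paper cites from Fulton between Proposition 20.3 and Corollary 20.3.
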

\begin{proof}
	First by \Cref{resolution-existence}, there exists a complete generable set $\Sigma$ of cones in $\mathbb{Z}^{d}$, such that $B$ is $\Sigma$-good. By \Cref{divisor-good-is-normal}, we note that $\overline{D}_{B}(\Sigma)$ is a divisor with simple normal crossings. This means that $N_B(\Sigma)$ is a union of conormal bundles $T_D^*\prs^n_{\trv(\Sigma)}$ where $D$ runs though all intersections of subsets of the set of all irreducible components of the simple normal crossing divisor $\overline D_B (\Sigma)$. In fact, we note that here the $D_\Lambda$ is smooth over $\Lambda$ (with $\Lambda=\kappa, K$ or $R$). Therefore, we also know that $T_D^*\prs^n_{\trv(\Sigma)}$ is so. Then, the assertion follows from the argument in \cite{fulton2016intersection} between Proposition 20.3 and Corollary 20.3 on page 399. As a consequence, we have
	\begin{equation*}
		\spmap([N_{B}(\Sigma)])=[N_{B}(\Sigma)]\in Z_{d+n}(T^{*}\prs^{n}_{\trv(\Sigma)}/\rsf).
	\end{equation*}
	By {\cite[Proposition~20.3]{fulton2016intersection}}, we have the following commutative diagram.
	\begin{equation*}
	\begin{tikzcd}
		CH_{d+n}(N_{B}(\Sigma)/\frf)\arrow[d,"\overline{\pi}_{\Sigma!}"']\arrow[r,"\spmap"]&CH_{d+n}(N_{B}(\Sigma)/\rsf)\arrow[d,"\overline{\pi}_{\Sigma!}"]\\
		CH_{n}(\overline{\pi}_{\Sigma\circ}N_{B}(\Sigma)/\frf)\arrow[d,"j^{*}"']\arrow[r,"\spmap"]&CH_{n}(\overline{\pi}_{\Sigma\circ}N_{B}(\Sigma)/\rsf)\arrow[d,"j^{*}"]\\
		CH_{n}(j^{*}\overline{\pi}_{\Sigma\circ}N_{B}(\Sigma)/\frf)\arrow[r,"\spmap"]&CH_{n}(j^{*}\overline{\pi}_{\Sigma\circ}N_{B}(\Sigma)/\rsf)
	\end{tikzcd}.
	\end{equation*}
	As a consequence, we know that the equality
	\begin{equation}\label{equality_in_chow_group}
		\spmap(j^*\overline\pi_{\Sigma!}[N_B(\Sigma)])=j^*\overline\pi_{\Sigma!}\spmap([N_B(\Sigma)])
	\end{equation}
	holds in $CH_{n}(j^{*}\overline{\pi}_{\Sigma\circ}N_{B}(\Sigma)/\rsf)$. Moreover, by \Cref{image-direct-image-kummer-sheaf} and \Cref{dimension-global-dimension}, we have $\dim\overline{\pi}_{\Sigma\circ}N_{B}(\Sigma)=\dim j^{*}\overline{\pi}_{\Sigma\circ}N_{B}(\Sigma)=n$. Therefore, the equality \cref{equality_in_chow_group} also holds in the cycle group $Z_{n}(j^{*}\overline{\pi}_{\Sigma\circ}N_{B}(\Sigma)/\rsf)$ which can be identified with $Z_{n}(T^{*}\afs^{n}/\rsf)$. Then, the assertion follows from \Cref{square-characteristic-cycle}.
\end{proof}

\subsection{Comparison to GKZ hypergeometric $\mathcal{D}$-modules}\label{topology}

For a scheme $Y$ over $\frf$, we denote by $Y_{\mathbb{C}}=Y\times_{\spec\frf}\spec\mathbb{C}$. For a morphism $f:Y'\rightarrow Y$ of schemes over $\frf$, let $f_{\mathbb{C}}:Y'_{\mathbb{C}}\rightarrow Y_{\mathbb{C}}$ be the morphism induced by the base change $\spec\mathbb{C}\rightarrow\spec\frf$. For a matrix $A=[a_{ij}]_{0\leq i\leq d;1\leq j\leq n}$ whose entries are integers, let $\Phi(A)=\{\omega\in\mathbb{Z}^{n}\mid A\cdot w=0\}$. For $\omega=(\omega_{i})_{1\leq i\leq n}\in\Phi(A)$, we define
\begin{equation*}
	\square_{\omega}=\prod_{\omega_{i}>0}\partial_{x_{i}}^{\omega_{i}}-\prod_{\omega_{i}<0}\partial_{x_{i}}^{-\omega_{i}}.
\end{equation*}
Let $z=(z_{0},\dots,z_{d})\in(\mathbb{C}^{*})^{d+1}$. For $0\leq i\leq d$, we denote by
\begin{equation*}
	L_{i}(A)=\sum_{j=1}^{d}a_{ij}x_{j}\partial_{x_{j}}-z_{i}.
\end{equation*}

\begin{dfn}[{\cite[2.1]{gel1989hypergeometric}}]
	Let $A=[a_{ij}]_{0\leq i\leq d;1\leq j\leq n}$ be a non-confluent matrix, and let $z=(z_{0},\dots,z_{d})\in(\mathbb{C}^{*})^{d+1}$. The \textnormal{GKZ hypergeometric $\mathcal{D}$-module} associated to $A,z$ is defined to be
	\begin{equation*}
		\mathcal{H}_{z}(A)=\mathcal{D}_{\afs^{n}_{\mathbb{C}}}\biggm/\biggl(\sum_{i=0}^{d}\mathcal{D}_{\afs^{n}_{\mathbb{C}}}\cdot L_{i}(A)+\sum_{\omega\in\Phi(A)}\mathcal{D}_{\afs^{n}_{\mathbb{C}}}\cdot\square_{\omega}\biggr)\in D_{c}^{b}(\mathcal{D}_{\afs^{n}_{\mathbb{C}}}).
	\end{equation*}
\end{dfn}

For a subset $\theta\subset\{1,\dots,n\}$, let $\mathfrak{C}_{A}^{\theta}$ be the contraction of the ideal of $\frf[t_{0}^{\pm 1},\dots,t_{d}^{\pm 1}][\coor(0)]$ generated by
\begin{equation*}
	\Xi_{0}(A,\theta)\cup\biggl\{\sum_{j\in\theta}a_{ij}x_{j}\xi(x_{j})\biggm|0\leq i\leq n\biggr\}\cup\biggl\{\xi(x_{j})-\prod_{0\leq i\leq d}t_{i}^{a_{ij}}\biggm|j\in\theta\biggr\}
\end{equation*}
by the embedding $\frf[\coor(0)]\rightarrow\frf[t_{0}^{\pm 1},\dots,t_{d}^{\pm 1}][\coor(0)]$. Let $C_{A}^{\theta}$ be the closed subset of $T^{*}\afs^{n}$ defined by $\mathfrak{C}_{A}^{\theta}$. Let $\Theta_{A}$ be the set of subsets of $\{1,\dots,n\}$ defined by {\cite[Definition~3.7]{reichelt2021algebraic}} where it is called the '\textit{$A$-umbrella}' and is denoted by $\Phi_A$. Then, by {\cite[Theorem~2.1]{berkesch2020characteristic}}, for $\theta\in\Theta_{A}$, the closed subset $C_{A}^{\theta}$ of $T^{*}\afs^{n}$ is irreducible. For $\theta\in\Theta_{A}$, let $\mathrm{m}_{A,z}^{\theta}\in\mathbb{Z}$ be the \textit{Euler-Koszul $0$-characteristic} defined by {\cite[Definition~4.7]{schulze2008irregularity}} where it is denoted by $\mu_A^{L,\tau}$ with $L$ set to $0$ and $\tau$ replaced by $\theta$.

\begin{thm}[{\cite[II,~6.2,~Theorem]{hotta1998equivariant}},{\cite[Corollary~4.12]{schulze2008irregularity}}]\label{topology-gkz-result}
	Let $A=[a_{ij}]_{0\leq i\leq d;1\leq j\leq n}$ be a non-confluent matrix, and let $z=(z_{0},\dots,z_{d})\in(\mathbb{C}^{*})^{d+1}$. Then $\mathcal{H}_{z}(A)$ is regular holonomic and
	\begin{equation}\label{topology-gkz-formula}
		CC\mathcal{H}_{z}(A)=\sum_{\theta\in\Theta_{A}}\mathrm{m}_{A,z}^{\theta}[(C_{A}^{\theta})_{\mathbb{C}}]\in Z_{n}(T^{*}\afs^{n}_{\mathbb{C}}/\mathbb{C}).
	\end{equation}
\end{thm}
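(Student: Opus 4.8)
The plan is to deduce the statement from the two cited results after setting up a dictionary between the notation of this article and that of the references. The assertion has two independent halves --- the regular holonomicity of $\mathcal{H}_{z}(A)$ and the explicit form of its characteristic cycle --- and I would establish them separately, since they come from \cite{hotta1998equivariant} and \cite{schulze2008irregularity} respectively.

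For regular holonomicity, the key observation is that non-confluence is exactly the \emph{homogeneity} condition on $A$: by definition there is an invertible $P$ whose first row of $PA$ is $(1,\dots,1)$, so $(1,\dots,1)$ lies in the $\mathbb{Q}$-row span of $A$. Homogeneity makes the Euler operators $L_{i}(A)$ generate a torus action under which $\mathcal{H}_{z}(A)$ is equivariant, in particular monodromic. I would then invoke \cite[II,~6.2,~Theorem]{hotta1998equivariant}, whose hypotheses are met precisely by this homogeneity, to conclude that $\mathcal{H}_{z}(A)$ is regular holonomic for every $z\in(\mathbb{C}^{*})^{d+1}$.

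For the characteristic cycle, the strategy is to quote \cite[Corollary~4.12]{schulze2008irregularity}, which already expresses $CC\mathcal{H}_{z}(A)$ as a sum over the faces of the $A$-umbrella with multiplicities given by Euler--Koszul $0$-characteristics. Since the indexing set $\Theta_{A}$ and the multiplicities $\mathrm{m}_{A,z}^{\theta}$ are \emph{defined} in this article to be the face set $\Phi_{A}$ of \cite[Definition~3.7]{reichelt2021algebraic} and the characteristics $\mu_{A}^{L,\tau}$ of \cite[Definition~4.7]{schulze2008irregularity} with $L=0$ and $\tau=\theta$, the only point still requiring argument is the identification of each irreducible component $(C_{A}^{\theta})_{\mathbb{C}}$ with the corresponding conormal variety in the reference.

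This identification is where the real work lies. I would verify that $\mathfrak{C}_{A}^{\theta}$ cuts out the conormal bundle $\overline{T^{*}_{O_{\theta}}\afs^{n}}$ of the toric orbit $O_{\theta}$ attached to $\theta$: the generators $\xi(x_{j})-\prod_{0\leq i\leq d}t_{i}^{a_{ij}}$ for $j\in\theta$ give the standard parametrization of the conormal directions by a point of the torus, $\Xi_{0}(A,\theta)$ forces $\xi(x_{j})=0$ off $\theta$, and the relations $\sum_{j\in\theta}a_{ij}x_{j}\xi(x_{j})$ (one per row of $A$) encode torus-invariance of the conormal direction; contracting along $\frf[\coor(0)]\rightarrow\frf[t_{0}^{\pm 1},\dots,t_{d}^{\pm 1}][\coor(0)]$ eliminates the torus coordinates and yields a closed subset of $T^{*}\afs^{n}$ already defined over $\frf$, so base change to $\mathbb{C}$ is harmless. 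Irreducibility of $C_{A}^{\theta}$ for $\theta\in\Theta_{A}$ I would take from \cite[Theorem~2.1]{berkesch2020characteristic}. The main obstacle is thus essentially bookkeeping rather than geometry: reconciling the three combinatorial descriptions of the toric conormals --- via the umbrella, via Euler--Koszul homology, and via the explicit ideal $\mathfrak{C}_{A}^{\theta}$ --- so that the two sides of \cref{topology-gkz-formula} agree term by term; once this dictionary is fixed, the formula is a direct transcription of \cite[Corollary~4.12]{schulze2008irregularity}.
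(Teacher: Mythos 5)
Your proposal matches the paper's proof, which consists entirely of citing \cite[II,~6.2,~Theorem]{hotta1998equivariant} for regular holonomicity and \cite[Corollary~4.12]{schulze2008irregularity} for the characteristic cycle formula. The additional bookkeeping you describe (matching non-confluence to homogeneity, and identifying $C_{A}^{\theta}$ with the toric conormal varieties of the references) is a reasonable elaboration of what the paper leaves implicit, but the approach is the same.
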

\begin{proof}
	We refer to {\cite[II,~6.2,~Theorem]{hotta1998equivariant}} for the proof of $\mathcal{H}_{z}(A)$ being regular holonomic, and refer to {\cite[Corollary~4.12]{schulze2008irregularity}} for the proof of \cref{topology-gkz-formula}.
\end{proof}

For a scheme $Y$ over $\frf$, we denote by $Y_{\topo}$ the topological space of $Y_{\mathbb{C}}$. By \cite[6.1.2]{beilinson2018faisceaux}, there is a fully faithful functor $\eta:D^{b}_{c}(Y_{\mathbb{C}},\ql)\rightarrow D_{c}^{b}(Y_{\topo},\ql)$ which commutes with the six functors. Fix an isomorphism $\iota:\ql\rightarrow\mathbb{C}$, then $\iota$ induces an equivalence of categories $D_{c}^{b}(Y_{\topo},\ql)\rightarrow D_{c}^{b}(Y_{\topo},\mathbb{C})$. We denote the composition of this equivalence functor and $\eta$ by
\begin{equation*}
	\eta_{\iota}:D^{b}_{c}(Y_{\mathbb{C}},\ql)\rightarrow D_{c}^{b}(Y_{\topo},\mathbb{C}).
\end{equation*}
For $\mathscr{F}\in D_{c}^{b}(Y,\ql)$, we denote by $\mathscr{F}_{\mathbb{C}}$ the inverse image along $Y_{\mathbb{C}}\rightarrow Y$ and by $\mathscr{F}_{\topo}=\eta_{\iota}(\mathscr{F}_{\mathbb{C}})$.

\begin{lem}\label{topology-gkz-isomorphism}
	Let $A=[a_{ij}]_{0\leq i\leq d;1\leq j\leq n}$ be a non-confluent matrix, and let $z=(z_{0},\dots,z_{d})\in(\mathbb{C}^{*})^{d+1}$. Then, we have
	\begin{equation*}
		\mathcal{H}_{z}(A)\cong R\pi_{\mathbb{C}!}(\tau_{\mathbb{C}}^{*}((\tau_{1})_{\mathbb{C}}^{*}\mathcal{K}_{z_{1}}\otimes\dots\otimes(\tau_{d})_{\mathbb{C}}^{*}\mathcal{K}_{z_{d}})\otimes(g_{B})_{\mathbb{C}}^{*}(j_{\dagger})_{\mathbb{C}!}\mathcal{K}_{-z_{0}}).
	\end{equation*}
	Here, for $c\in\mathbb{C}^{*}$, we denote by $\mathcal{K}_{c}=\mathcal{D}_{\spec\mathbb{C}[t]}/(\mathcal{D}_{\spec\mathbb{C}[t]}\cdot(t\partial_{t}-c))$.
\end{lem}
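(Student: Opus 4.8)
The plan is to mirror, on the side of $\mathcal{D}$-modules, the chain of isomorphisms that proves the $\ell$-adic statement \Cref{sheaf-isomorphism}, realizing $\mathcal{H}_{z}(A)$ as the $\mathcal{D}$-module direct image of a rank-one integrable connection along the torus fibration $\pi$. Since $A$ is non-confluent, by definition there is $P\in\gl(d+1,\mathbb{Z})$ with $PA=\widehat{B}$ for a sub-non-confluent $B$; as $P$ preserves $\Phi(A)$ (hence the box operators) and transforms the Euler operators linearly, both sides of the asserted isomorphism are $\gl(d+1,\mathbb{Z})$-equivariant and I may assume $A=\widehat{B}$ is standard non-confluent. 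Then $L_{0}(A)=\sum_{j}x_{j}\partial_{x_{j}}-z_{0}$ and $L_{i}(A)=\sum_{j}b_{ij}x_{j}\partial_{x_{j}}-z_{i}$ for $1\leq i\leq d$. Write $\mathcal{M}=\tau_{\mathbb{C}}^{*}((\tau_{1})_{\mathbb{C}}^{*}\mathcal{K}_{z_{1}}\otimes\dots\otimes(\tau_{d})_{\mathbb{C}}^{*}\mathcal{K}_{z_{d}})\otimes(g_{B})_{\mathbb{C}}^{*}(j_{\dagger})_{\mathbb{C}!}\mathcal{K}_{-z_{0}}$ for the integrand: a rank-one connection on $U_{B}$ extended by zero, whose horizontal symbol is $\prod_{i}t_{i}^{z_{i}}\cdot g_{B}^{-z_{0}}$ with $g_{B}=\sum_{j}x_{j}\prod_{i}t_{i}^{b_{ij}}$.

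First I would compute $R\pi_{\mathbb{C}!}\mathcal{M}$. Since $\pi$ is affine of relative dimension $d$, the $!$-direct image is computed by the relative de Rham (Koszul) complex of the $j_{\dagger!}$-extended connection in $\partial_{t_{1}},\dots,\partial_{t_{d}}$; the task is to show this complex is concentrated in a single degree (degree zero in the normalization that makes $\mathcal{H}_{z}(A)$ an honest module) and that the resulting $\mathcal{D}_{\afs^{n}_{\mathbb{C}}}$-module is cyclic, generated by the image $u$ of the canonical section of $\mathcal{M}$. Concentration follows from the regularity and holonomicity of $\mathcal{M}$ along the torus fibers, so that fiberwise de Rham cohomology lives only in top degree, and the explicit description of the top cohomology as the coinvariants $\mathcal{M}/\sum_{i}\partial_{t_{i}}\mathcal{M}$ exhibits the generator $u$.

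Next I would verify that $u$ is annihilated by the GKZ ideal. The Euler relations $L_{i}(A)u=0$ for $1\leq i\leq d$ arise from integration by parts in $t_{i}$: in the coinvariants computing the direct image, the identity $\partial_{t_{i}}(t_{i}\,\cdot)=(t_{i}\partial_{t_{i}}+1)(\cdot)$ converts the torus-eigenvalue relation for $\mathcal{K}_{z_{i}}$ into the vanishing of $L_{i}(A)$, while $L_{0}(A)u=0$ reflects the degree-one homogeneity of $g_{B}$ in the $x$-variables together with the eigenvalue of $\mathcal{K}_{-z_{0}}$. (The mixed signs $z_{i}$ versus $-z_{0}$ in the statement are precisely what is needed for these to match the normalization of $L_{i}(A)$ after the relative-dimension shift of the direct image.) For the box operators, since $\partial_{x_{j}}$ commutes with $\pi$ and acts on the canonical section by a scalar multiple of $\prod_{i}t_{i}^{b_{ij}}/g_{B}$, the monomial identity $\prod_{\omega_{j}>0}(\prod_{i}t_{i}^{b_{ij}})^{\omega_{j}}=\prod_{\omega_{j}<0}(\prod_{i}t_{i}^{b_{ij}})^{-\omega_{j}}$, valid exactly because $A\omega=0$, gives $\square_{\omega}u=0$ for every $\omega\in\Phi(A)$. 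Together these produce a canonical surjection $\mathcal{H}_{z}(A)\twoheadrightarrow R\pi_{\mathbb{C}!}\mathcal{M}$.

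The hard part will be upgrading this surjection to an isomorphism, that is, ruling out extra relations in $R\pi_{\mathbb{C}!}\mathcal{M}$. Both sides are regular holonomic ($\mathcal{H}_{z}(A)$ by \Cref{topology-gkz-result}, the right-hand side as the $!$-direct image of a regular holonomic module along $\pi$), and they restrict to the same module on the dense open locus of $\afs^{n}_{\mathbb{C}}$ where $A$ is nondegenerate, on which the direct image is visibly the integrated Kummer connection. I would conclude by comparing holonomic ranks, showing that the generic rank of $R\pi_{\mathbb{C}!}\mathcal{M}$ equals the normalized volume governing the rank of $\mathcal{H}_{z}(A)$ — equivalently, identifying both with the intermediate extension from this open locus — which forces a surjection of regular holonomic modules of equal length to be an isomorphism. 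This rank and characteristic-cycle comparison, or the equivalent Euler–Koszul homology vanishing of Schulze–Walther, is the crux of the argument; everything preceding it is a matter of matching the direct-image presentation with the GKZ presentation.
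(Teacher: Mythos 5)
Your outline has the right overall shape---realize the right-hand side as a direct image carrying a canonical cyclic generator killed by the GKZ ideal, then upgrade the resulting surjection---but the two steps on which everything hinges are not justified by what you say, and they are exactly where the content of the lemma lies. First, concentration of the relative de Rham complex in a single degree does not follow from ``regularity and holonomicity along the torus fibers'': the trivial connection on $\mathbb{G}_{m}$ is regular holonomic and has de Rham cohomology in two degrees, and the hypothesis $z\in(\mathbb{C}^{*})^{d+1}$ only excludes $z_{i}=0$, so nothing prevents an integer $z_{i}$ from making $\mathcal{K}_{z_{i}}$ trivial on the torus; concentration is a non-resonance phenomenon, not a formal one. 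Second, your mechanism for injectivity is broken: $\mathcal{H}_{z}(A)$ need not be the intermediate extension from the nondegenerate locus, its holonomic rank need not equal the normalized volume (rank-jumping parameters exist), and in any case a surjection of regular holonomic modules with equal \emph{generic} rank can still have a kernel supported on a proper closed subset, so a generic-rank count does not give equal length. You correctly name Euler--Koszul homology vanishing as the crux, but that vanishing \emph{is} the theorem being used here, not a lemma one gets to quote after setting up the surjection; a proposal whose fallback justifications for the crux are false in the stated generality has a genuine gap.

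For comparison, the paper does not argue directly at all. It introduces the monomial map $h_{A}:\trs^{d+1}\rightarrow\afs^{n}$ and cites Reichelt--Walther (Theorem~2.10 of \cite{reichelt2021algebraic}) to identify $\mathcal{H}_{z}(A)$ with the Fourier--Laplace transform of $(h_{A})_{\mathbb{C}!}$ applied to the Kummer module on the $(d+1)$-torus; it then repeats the manipulation of \cite[Lemma~1.1 and \S 3]{fu2016l} to trade the Fourier--Laplace kernel for the factor $(g_{B})_{\mathbb{C}}^{*}(j_{\dagger})_{\mathbb{C}!}\mathcal{K}_{-z_{0}}$, in exact parallel with the $\ell$-adic statement \Cref{sheaf-isomorphism}. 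The Euler--Koszul input you defer to is precisely what the cited theorem supplies. If you want a self-contained proof along your lines, you must either impose and exploit a non-resonance hypothesis or actually run the Euler--Koszul argument; as written, the proposal pushes the entire difficulty into a step whose offered justifications do not hold.
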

\begin{proof}
	Let $h_A:\mathbb T^{d+1}\rightarrow\mathbb A^n$ be the composition of the open immersion $\trs^n\hookrightarrow\afs^n$ and the morphism
	\begin{equation*}
		\trs^{d+1}\to\trs^n:(t_0,\dots,t_d)\mapsto(t_0^{a_{01}}\dots t_d^{a_{d1}},\dots,t_0^{a_{0n}}\dots t_d^{a_{dn}}).
	\end{equation*}
	By \cite[Theorem~2.10]{reichelt2021algebraic}, we know that $\mathcal H_z(A)$ is canonically isomorphic to the Fourier-Laplace transform of $(h_A)_{\mathbb C !}(\tau_{\mathbb{C}}^{*}((\tau_{0})_{\mathbb{C}}^{*}\mathcal{K}_{z_{0}}\otimes\dots\otimes(\tau_{d})_{\mathbb{C}}^{*}\mathcal{K}_{z_{d}})$. Then, the assertion follows from a same procedure as that in the proof of \cite[Lemma~1.1]{fu2016l} and \cite[\S~3]{fu2016l}.
\end{proof}

\begin{cor}\label{topology-comparison-easy}
	Let $B=[b_{ij}]_{0\leq i\leq d;1\leq j\leq n}$ be a sub-non-confluent matrix, and let $\chi=(\chi_{0},\dots,\chi_{d}):\mu_{q-1}^{d+1}\rightarrow\overline{\mathbb{Q}}_{\ell}^{*}$ be a multiplicative character. For $0\leq i\leq d$, set $z_{i}=\iota\chi_{i}(\zeta_{q-1})$. Write $z=(z_{0},\dots,z_{d})$, then the following statements are true.
	\begin{enumerate}
		\item $\gkz_{\chi}(B)_{\topo}\cong DR(\mathcal{H}_{z}(\widehat{B}))\in D_{c}^{b}(\afs^{n}_{\topo},\mathbb{C})$.
		\item $CC\mathscr{H}_{\chi}(\widehat{B})=(-1)^{d+n}\sum_{\theta\in\Theta_{\widehat{B}}}\mathrm{m}_{\widehat{B},z}^{\theta}[C_{\widehat{B}}^{\theta}]\in Z_{n}(T^{*}\afs^{n}/\frf)$.
	\end{enumerate}
\end{cor}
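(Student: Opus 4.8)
The plan is to derive both statements from \Cref{topology-gkz-isomorphism}, which exhibits $\mathcal{H}_{z}(\widehat{B})$ as a $\mathcal{D}$-module direct image of a tensor product of Kummer-type $\mathcal{D}$-modules, in exact parallel with the description of $\gkz_{\chi}(B)=R\pi_{!}j_{B!}\kum_{\chi}(B)$ obtained in the proof of \Cref{sheaf-isomorphism}, namely $j_{B!}\kum_{\chi}(B)\cong\tau^{*}(\tau_{1}^{*}\kum_{\chi_{1}}\otimes\dots\otimes\tau_{d}^{*}\kum_{\chi_{d}})\otimes g_{B}^{*}j_{\dagger!}\kum_{\chi_{0}^{-1}}$. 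For statement 1, I would apply the de Rham functor $DR$ to the isomorphism of \Cref{topology-gkz-isomorphism} and invoke the Riemann--Hilbert correspondence, under which $DR$ intertwines the $\mathcal{D}$-module operations $R\pi_{\mathbb{C}!}$, $\otimes$, $g_{\mathbb{C}}^{*}$ and $(j_{\dagger})_{\mathbb{C}!}$ with the operations $R\pi_{\topo!}$, $\otimes$, $g_{\topo}^{*}$ and $(j_{\dagger})_{\topo!}$ on constructible complexes; the regularity needed for this is furnished by \Cref{topology-gkz-result}. This reduces statement 1 to the rank-one identifications $DR(\mathcal{K}_{z_{i}})\cong(\kum_{\chi_{i}})_{\topo}$ for $1\le i\le d$ and $DR(\mathcal{K}_{-z_{0}})\cong(\kum_{\chi_{0}^{-1}})_{\topo}$. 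These I would verify by comparing monodromies: $\mathcal{K}_{c}$ has local monodromy $e^{2\pi\sqrt{-1}c}$ about the puncture, which under $z_{i}=\iota\chi_{i}(\zeta_{q-1})$ matches the monodromy of the topological realization of $\kum_{\chi_{i}}$, the inversion $-z_{0}\leftrightarrow\chi_{0}^{-1}$ accounting for the $\kum_{\chi_{0}^{-1}}$ term. Substituting these identifications into $DR$ of \Cref{topology-gkz-isomorphism} and comparing with the displayed formula for $j_{B!}\kum_{\chi}(B)$ then yields $\gkz_{\chi}(B)_{\topo}\cong DR(\mathcal{H}_{z}(\widehat{B}))$.

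For statement 2, I would take characteristic cycles in statement 1. Since $\mathcal{H}_{z}(\widehat{B})$ is regular holonomic, \Cref{topology-gkz-result} gives $CC\mathcal{H}_{z}(\widehat{B})=\sum_{\theta\in\Theta_{\widehat{B}}}\mathrm{m}_{\widehat{B},z}^{\theta}[(C_{\widehat{B}}^{\theta})_{\mathbb{C}}]$, and by \cite[Proposition~9.4.2,~Proposition~9.4.3]{kashiwara2003d} the characteristic cycle of $DR(\mathcal{H}_{z}(\widehat{B}))$ agrees with this cycle up to the sign dictated by the normalizations; I expect this sign to be $(-1)^{d+n}$, consistent with \Cref{image-kummer-sheaf}, where $\kum_{\chi}(B)$ is placed in degree $0$ on the $(d+n)$-dimensional space $U_{B}$. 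On the $\ell$-adic side, $CC\gkz_{\chi}(B)$ over $\frf$ is compatible with the topological realization, that is $(CC\gkz_{\chi}(B))\times_{\spec\frf}\spec\mathbb{C}=CC(\gkz_{\chi}(B)_{\topo})$; this is the characteristic-zero counterpart of the compatibility exploited in \Cref{specialization-comparison}, and it holds because, by \Cref{square-characteristic-cycle} together with \Cref{image-kummer-sheaf} and \Cref{image-direct-image-kummer-sheaf}, $CC\gkz_{\chi}(B)$ is a direct image of conormal bundles of the components of a divisor with simple normal crossings, whose cycle classes are insensitive to the base field. Combining these with $(C_{\widehat{B}}^{\theta})_{\mathbb{C}}=C_{\widehat{B}}^{\theta}\times_{\spec\frf}\spec\mathbb{C}$ and the injectivity of flat base change on cycle groups along $\frf\to\mathbb{C}$, the equality descends to $Z_{n}(T^{*}\afs^{n}/\frf)$.

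The main obstacle I anticipate lies in statement 1: checking that $DR$ commutes with the precise combination of operations in \Cref{topology-gkz-isomorphism} with the correct shifts, and fixing the dictionary between the exponents $z_{i}$ (equivalently the monodromies of $\mathcal{K}_{z_{i}}$) and the multiplicative characters $\chi_{i}$ through the chosen isomorphism $\iota$, including the inversion in the $\chi_{0}^{-1}$ factor. Once this dictionary is in place, the remaining delicate point is the sign $(-1)^{d+n}$ in statement 2, which is pinned down by matching Saito's normalization of the characteristic cycle of a rank-one sheaf in degree $0$ (as in \Cref{image-kummer-sheaf}) against Kashiwara's normalization of $CC$ for a regular holonomic $\mathcal{D}$-module.
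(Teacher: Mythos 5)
Your proposal is correct and follows essentially the same route as the paper: part 1 is reduced, via \Cref{topology-gkz-isomorphism} and the rank-one identification $(\kum_{\rho})_{\topo}\cong DR(\mathcal{K}_{\iota\rho(1)})$, to compatibility of $DR$ with the operations involved, and part 2 is obtained by matching the explicit pushforward cycle $(-1)^{d+n}j^{*}\overline{\pi}_{\Sigma!}[N_{B}(\Sigma)]$ from \Cref{square-characteristic-cycle} against the formula of \Cref{topology-gkz-result}, using \cite[Propositions~9.4.2--9.4.3]{kashiwara2013sheaves} and the fact that $[N_{B}(\Sigma)]$ and its pushforward are defined over $\frf$ and commute with base change to $\mathbb{C}$. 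The only organizational difference is that you take $CC$ of part 1 and invoke the index-theorem comparison $CC(DR(\mathcal{H}_{z}))=CC(\mathcal{H}_{z})$, whereas the paper re-runs the direct-image formula for $CC$ on the topological side; these are interchangeable here, and your flagged concerns (the monodromy dictionary via $\iota$ and the sign $(-1)^{d+n}$) are exactly the points the paper's normalizations are meant to settle.
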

\begin{proof}
	For a multiplicative character $\rho:\mu_{q-1}\rightarrow\overline{\mathbb{Q}}_{\ell}^{*}$, we note that $(\kum_{\rho})_{\topo}\cong DR(\mathcal{K}_{\iota\rho(1)})$. Then, the first assertion follows from \Cref{sheaf-definition} and \Cref{topology-gkz-isomorphism}. Next, we prove 2. By \Cref{resolution-existence}, there exists a generable set $\Sigma$ of cones in $\mathbb{Z}^{d}$ which is complete, such that $B$ is $\Sigma$-good. By \Cref{divisor-good-is-normal}, we note that $\overline{D}_{B}(\Sigma)=\prs^{n}_{\trv(\Sigma)}\setminus U_{B}$ is a divisor with simple normal crossings. By \Cref{topology-gkz-isomorphism}, we have
	\begin{equation*}
		\mathcal{H}_{z}(B)\cong R(\overline{\pi}_{\Sigma})_{\mathbb{C}!}(\overline{j}_{B,\Sigma})_{\mathbb{C}!}\mathcal{K}_{z}(B)
	\end{equation*}
	where $\mathcal{K}_{z}(B)=\tau_{\mathbb{C}}^{*}((\tau_{1})_{\mathbb{C}}^{*}\mathcal{K}_{z_{1}}\otimes\dots\otimes(\tau_{d})_{\mathbb{C}}^{*}\mathcal{K}_{z_{d}})\otimes(g_{B})_{\mathbb{C}}^{*}(j_{\dagger})_{\mathbb{C}!}\mathcal{K}_{-z_{0}}$. We note that
	\begin{equation*}
		CC\mathcal{K}_{z}(B)=(-1)^{d+n}[N_{B}(\Sigma)_{\mathbb{C}}]\in Z_{d+n}(T^{*}(\afs^{n}_{\trv(\Sigma)})_{\mathbb{C}}/\mathbb{C}).
	\end{equation*}
	By {\cite[Proposition~9.4.2,~Proposition~9.4.3]{kashiwara2013sheaves}}, we have
	\begin{equation*}
		CC\mathcal{H}_{z}(B)=j_{\mathbb{C}}^{*}(\overline{\pi}_{\Sigma})_{\mathbb{C}!}CC\mathcal{K}_{z}(B)\in Z_{d+n}(T^{*}\afs^{n}_{\mathbb{C}}/\mathbb{C}).
	\end{equation*}
	Then, the assertion follows from \Cref{square-characteristic-cycle} and \Cref{topology-gkz-result}.
\end{proof}

\begin{thm}\label{topology-comparison-hard}
	Let $B=[b_{ij}]_{0\leq i\leq d;1\leq j\leq n}$ be a sub-non-confluent matrix. Let $\chi=(\chi_{0},\dots,\chi_{d}):\mu_{q-1}^{d+1}\rightarrow\overline{\mathbb{Q}}_{\ell}^{*}$ be a multiplicative character where $\chi_{0}$ is nontrivial, and let $\psi:\rsf\rightarrow\overline{\mathbb{Q}}_{\ell}^{*}$ be a nontrivial additive character. For $0\leq i\leq d$, set $z_{i}=\iota\chi_{i}(\zeta_{q-1})$. Write $z=(z_{0},\dots,z_{d})$. If $B$ is sub-$p$-nondegenerate, then we have
	\begin{equation*}
		CC\hyp_{\psi}(\widehat{B},\chi)=\sum_{\theta\in\Theta_{\widehat{B}}}\mathrm{m}_{\widehat{B},z}^{\theta}\spmap([C_{\widehat{B}}^{\theta}])\in Z_{n}(T^{*}\afs^{n}/\rsf).
	\end{equation*}
\end{thm}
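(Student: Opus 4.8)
The plan is to assemble the statement directly from the three comparison results already established, by transporting the characteristic cycle of the associated $\ell$-adic GKZ-type sheaf from characteristic zero down to $\rsf$ along the specialization map and keeping careful track of the signs introduced at each stage. The route is
\[
\hyp_{\psi}(\widehat{B},\chi)\ \rightsquigarrow\ \gkz_{\chi}(B)_{\rsf}\ \rightsquigarrow\ \spmap\bigl(CC\gkz_{\chi}(B)_{\frf}\bigr)\ \rightsquigarrow\ \text{the }\mathcal{D}\text{-module formula}.
\]
The standing hypotheses ($B$ sub-non-confluent, $\chi_{0}$ nontrivial, and $B$ sub-$p$-nondegenerate, with $z_{i}=\iota\chi_{i}(\zeta_{q-1})$) are exactly what is required to feed each quoted result, so the first thing I would do is record that all three apply simultaneously.

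First I would invoke \Cref{sheaf-isomorphism}, which gives $\hyp_{\psi}(\widehat{B},\chi)\cong\gkz_{\chi}(B)\otimes\gauss(\chi_{0},\psi)[d+n]$ over $\rsf$. Passing to characteristic cycles, the twist by the one-dimensional, geometrically constant space $\gauss(\chi_{0},\psi)$ is invisible to both the singular support and the characteristic cycle (these are geometric invariants, and tensoring with a rank-one sheaf that is geometrically trivial changes neither the support nor the multiplicities), while the shift $[d+n]$ contributes the factor $(-1)^{d+n}$ via $CC(\mathscr{F}[m])=(-1)^{m}CC(\mathscr{F})$. This yields $CC\hyp_{\psi}(\widehat{B},\chi)=(-1)^{d+n}CC\gkz_{\chi}(B)_{\rsf}$.

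Next I would rewrite $CC\gkz_{\chi}(B)_{\rsf}$ using \Cref{specialization-comparison}, which applies since $B$ is sub-$p$-nondegenerate and gives $CC\gkz_{\chi}(B)_{\rsf}=\spmap\bigl(CC\gkz_{\chi}(B)_{\frf}\bigr)$ in $Z_{n}(T^{*}\afs^{n}/\rsf)$. Then \Cref{topology-comparison-easy} identifies the characteristic cycle of $\gkz_{\chi}(B)$ over $\frf$ with the Riemann–Hilbert image of $CC\mathcal{H}_{z}(\widehat{B})$, namely $CC\gkz_{\chi}(B)_{\frf}=(-1)^{d+n}\sum_{\theta\in\Theta_{\widehat{B}}}\mathrm{m}_{\widehat{B},z}^{\theta}[C_{\widehat{B}}^{\theta}]$. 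Because $\spmap$ is a homomorphism of cycle groups (as recalled just before \Cref{specialization-comparison}), it commutes with this finite $\mathbb{Z}$-linear combination, so substituting gives $CC\hyp_{\psi}(\widehat{B},\chi)=(-1)^{d+n}\cdot(-1)^{d+n}\sum_{\theta}\mathrm{m}_{\widehat{B},z}^{\theta}\spmap([C_{\widehat{B}}^{\theta}])$; the two sign factors cancel and the asserted formula drops out.

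I do not expect a genuine obstacle at this stage, since the substantive work has already been carried out in \Cref{specialization-comparison} and \Cref{topology-comparison-easy}, and the final theorem is essentially their concatenation. The only points demanding care are the normalization items in the middle step: confirming that the Gauss-sum twist is geometrically trivial and hence leaves $CC$ unchanged, that the shift produces precisely $(-1)^{d+n}$, and that this sign cancels against the one appearing in the $\mathcal{D}$-module formula. I would also be attentive to the ambient group in which each equality is read, ensuring that the final identity holds in the cycle group $Z_{n}(T^{*}\afs^{n}/\rsf)$ and not merely in the rational Chow group, which is legitimate here because $\spmap$ is already induced on cycles and the relevant images are of the expected dimension $n$.
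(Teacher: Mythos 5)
Your proposal is correct and follows essentially the same route as the paper, whose proof is precisely the concatenation of \Cref{specialization-comparison} and \Cref{topology-comparison-easy} (with \Cref{sheaf-isomorphism} supplying the passage from $\gkz_{\chi}(B)$ to $\hyp_{\psi}(\widehat{B},\chi)$, exactly as you use it). Your explicit bookkeeping of the shift sign $(-1)^{d+n}$, its cancellation against the sign in \Cref{topology-comparison-easy}.2, and the triviality of the Gauss-sum twist for $CC$ only makes explicit what the paper leaves implicit.
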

\begin{proof}
	The assertion follows from \Cref{specialization-comparison} and \Cref{topology-comparison-easy}.
\end{proof}

\section{Examples of non-square matrices}\label{example}
\begin{exm}\label{example-p-nondegenerate}
	Let $B=\begin{bmatrix}0&0&1\end{bmatrix}$, then $\widehat{B}$ is non-square but is both non-confluent and $p$-nondegenerate. For a homogeneous polynomial $F\in\rsf[X_{0},X_{1},X_{2},X_{3}]$, we denote by $D_{F}$ the divisor of $\prs^{3}_{\rsf}$ defined by $F$. Then, we have the following result.
	\begin{center}
	\begin{tabular}{|c|ccccccccccccccc}
		\hline\rule{0pt}{1.2em}
		$\theta$&\multicolumn{3}{c|}{$\varnothing$}&\multicolumn{4}{c|}{$\{1\}$}&\multicolumn{4}{c|}{$\{2\}$}&\multicolumn{4}{c|}{$\{3\}$}\\[0.2em]
		\hline\rule{0pt}{1.5em}
		$S(\widehat{B},\theta)$&\multicolumn{3}{c|}{$T^{*}_{\prs^{3}}\prs^{3}$}&\multicolumn{4}{c|}{$T^{*}_{D_{X_{1}}}\prs^{3}\cup T^{*}_{\prs^{3}}\prs^{3}$}&\multicolumn{4}{c|}{$T^{*}_{D_{X_{2}}}\prs^{3}\cup T^{*}_{\prs^{3}}\prs^{3}$}&\multicolumn{4}{c|}{$T^{*}_{D_{X_{3}}}\prs^{3}\cup T^{*}_{\prs^{3}}\prs^{3}$}\\[0.5em]
		\hline\rule{0pt}{1.2em}
		$\theta$&\multicolumn{15}{c|}{$\{1,2\}$}\\[0.2em]
		\hline\rule{0pt}{1.5em}
		$S(\widehat{B},\theta)$&\multicolumn{15}{c|}{$T^{*}_{D_{X_{1}+X_{2}}}\prs^{3}\cup T^{*}_{\prs^{3}}\prs^{3}$}\\[0.5em]
		\hline\rule{0pt}{1.2em}
		$\theta$&\multicolumn{15}{c|}{$\{2,3\}$}\\[0.2em]
		\hline\rule{0pt}{1.5em}
		$S(\widehat{B},\theta)$&\multicolumn{15}{c|}{$T^{*}_{D_{X_{2}}\cap D_{X_{3}}}\prs^{3}\cup T^{*}_{D_{X_{2}}}\prs^{3}\cup T^{*}_{D_{X_{3}}}\prs^{3}\cup T^{*}_{\prs^{3}}\prs^{3}$}\\[0.5em]
		\hline\rule{0pt}{1.2em}
		$\theta$&\multicolumn{15}{c|}{$\{1,3\}$}\\[0.2em]
		\hline\rule{0pt}{1.5em}
		$S(\widehat{B},\theta)$&\multicolumn{15}{c|}{$T^{*}_{D_{X_{1}}\cap D_{X_{3}}}\prs^{3}\cup T^{*}_{D_{X_{1}}}\prs^{3}\cup T^{*}_{D_{X_{3}}}\prs^{3}\cup T^{*}_{\prs^{3}}\prs^{3}$}\\[0.5em]
		\hline\rule{0pt}{1.2em}
		$\theta$&\multicolumn{15}{c|}{$\{1,2,3\}$}\\[0.2em]
		\hline\rule{0pt}{1.5em}
		$S(\widehat{B},\theta)$&\multicolumn{15}{c|}{$T^{*}_{D_{X_{1}+X_{2}}\cap D_{X_{3}}}\prs^{3}\cup T^{*}_{D_{X_{1}+X_{2}}}\prs^{3}\cup T^{*}_{D_{X_{3}}}\prs^{3}\cup T^{*}_{\prs^{3}}\prs^{3}$}\\[0.5em]
		\hline
	\end{tabular}
	\end{center}
	This implies that $CC\hyp_{\psi}(\widehat{B},\chi)$ is given by \Cref{topology-comparison-hard} where $\chi=(\chi_{0},\chi_{1}):\mu_{q-1}^{2}\rightarrow\overline{\mathbb{Q}}_{\ell}^{*}$ is a multiplicative character with $\chi_{0}$ being non-trivial.
\end{exm}

\begin{rem}
	\Cref{example-p-nondegenerate} considers a case where though the non-confluent matrix is non-square, as long as it is still $p$-nondegenerate, the characteristic cycle of the corresponding $\ell$-adic GKZ hypergeometric sheaf is given by \Cref{topology-comparison-hard}.
\end{rem}

\begin{exm}\label{example-p-reducible}
	Let $B=\begin{bmatrix}0&p&2p\end{bmatrix}$ and we assume that $p>2$, then $\widehat{B}$ is non-square as well as non-confluent but is not $p$-nondegenerate. Then $S_{0}(\widehat{B},\{1,2\})$ is the closed subset of $T^{*}\afs^{3}$ defined by the ideal generated by $\{\xi(x_{3}),x_{1}\xi(x_{1})+x_{2}\xi(x_{2})\}$, which implies that
	\begin{equation*}
		\dim S(\widehat{B},\{1,2\})\geq \dim S_{0}(\widehat{B},\{1,2\})=4>3.
	\end{equation*}
	Let $\chi=(\chi_{0},\chi_{1}):\mu_{q-1}^{2}\rightarrow\overline{\mathbb{Q}}_{\ell}^{*}$ be a multiplicative character with $\chi_{0}$ being non-trivial, then in this case $CC\hyp_{\psi}(\widehat{B},\chi)$ may not be able to be given by \Cref{topology-comparison-hard}. However, we denote by $B^{\div}=\begin{bmatrix}0&1&2\end{bmatrix}$, then $\widehat{B^{\div p}}$ is non-confluent as well as $p$-nondegenerate and we have
	\begin{equation*}
		\hyp_{\psi}(\widehat{B},\chi)\cong\hyp_{\psi}(\widehat{B^{\div p}},\chi).
	\end{equation*}
	Therefore, the following result implies that $CC\hyp_{\psi}(\widehat{B},\chi)$ is given by \Cref{topology-comparison-hard}.
	\begin{center}
	\begin{tabular}{|c|ccccccccccccccc}
		\hline\rule{0pt}{1.2em}
		$\theta$&\multicolumn{3}{c|}{$\varnothing$}&\multicolumn{4}{c|}{$\{1\}$}&\multicolumn{4}{c|}{$\{2\}$}&\multicolumn{4}{c|}{$\{3\}$}\\[0.2em]
		\hline\rule{0pt}{1.5em}
		$S(\widehat{B},\theta)$&\multicolumn{3}{c|}{$T^{*}_{\prs^{3}}\prs^{3}$}&\multicolumn{4}{c|}{$T^{*}_{D_{X_{1}}}\prs^{3}\cup T^{*}_{\prs^{3}}\prs^{3}$}&\multicolumn{4}{c|}{$T^{*}_{D_{X_{2}}}\prs^{3}\cup T^{*}_{\prs^{3}}\prs^{3}$}&\multicolumn{4}{c|}{$T^{*}_{D_{X_{3}}}\prs^{3}\cup T^{*}_{\prs^{3}}\prs^{3}$}\\[0.5em]
		\hline\rule{0pt}{1.2em}
		$\theta$&\multicolumn{15}{c|}{$\{1,2\}$}\\[0.2em]
		\hline\rule{0pt}{1.5em}
		$S(\widehat{B},\theta)$&\multicolumn{15}{c|}{$T^{*}_{D_{X_{1}}\cap D_{X_{2}}}\prs^{3}\cup T^{*}_{D_{X_{1}}}\prs^{3}\cup T^{*}_{D_{X_{2}}}\prs^{3}\cup T^{*}_{\prs^{3}}\prs^{3}$}\\[0.5em]
		\hline\rule{0pt}{1.2em}
		$\theta$&\multicolumn{15}{c|}{$\{2,3\}$}\\[0.2em]
		\hline\rule{0pt}{1.5em}
		$S(\widehat{B},\theta)$&\multicolumn{15}{c|}{$T^{*}_{D_{X_{2}}\cap D_{X_{3}}}\prs^{3}\cup T^{*}_{D_{X_{2}}}\prs^{3}\cup T^{*}_{D_{X_{3}}}\prs^{3}\cup T^{*}_{\prs^{3}}\prs^{3}$}\\[0.5em]
		\hline\rule{0pt}{1.2em}
		$\theta$&\multicolumn{15}{c|}{$\{1,3\}$}\\[0.2em]
		\hline\rule{0pt}{1.5em}
		$S(\widehat{B},\theta)$&\multicolumn{15}{c|}{$T^{*}_{D_{X_{1}}\cap D_{X_{3}}}\prs^{3}\cup T^{*}_{D_{X_{1}}}\prs^{3}\cup T^{*}_{D_{X_{3}}}\prs^{3}\cup T^{*}_{\prs^{3}}\prs^{3}$}\\[0.5em]
		\hline\rule{0pt}{1.2em}
		$\theta$&\multicolumn{15}{c|}{$\{1,2,3\}$}\\[0.2em]
		\hline\rule{0pt}{1.5em}
		$\mathfrak{S}_{0}(\widehat{B},\theta)$&\multicolumn{15}{c|}{$\Bigl\langle 2\tfrac{X_{1}}{X_{0}}\xi\bigl(\tfrac{X_{1}}{X_{0}}\bigr)-\tfrac{X_{2}}{X_{0}}\xi\bigl(\tfrac{X_{2}}{X_{0}}\bigr),\tfrac{X_{1}}{X_{0}}\xi\bigl(\tfrac{X_{1}}{X_{0}}\bigr)-\tfrac{X_{3}}{X_{0}}\xi\bigl(\tfrac{X_{3}}{X_{0}}\bigr),\xi\bigl(\tfrac{X_{1}}{X_{0}}\bigr)\xi\bigl(\tfrac{X_{3}}{X_{0}}\bigr)-\xi\bigl(\tfrac{X_{2}}{X_{0}}\bigr)^{2}\Bigr\rangle$}\\[0.5em]
		\hline\rule{0pt}{1.5em}
		$\mathfrak{S}_{1}(\widehat{B},\theta)$&\multicolumn{15}{c|}{$\Bigl\langle\xi\bigl(\tfrac{X_{0}}{X_{1}}\bigr),\tfrac{X_{2}}{X_{1}}\xi\bigl(\tfrac{X_{2}}{X_{1}}\bigr)+2\tfrac{X_{3}}{X_{1}}\xi\bigl(\tfrac{X_{3}}{X_{1}}\bigr),\Bigl(\xi\bigl(\tfrac{X_{2}}{X_{1}}\bigr)+\xi\bigl(\tfrac{X_{3}}{X_{1}}\bigr)\Bigr)\xi\bigl(\tfrac{X_{3}}{X_{1}}\bigr)+\xi\bigl(\tfrac{X_{2}}{X_{1}}\bigr)^{2}\Bigr\rangle$}\\[0.5em]
		\hline\rule{0pt}{1.5em}
		$\mathfrak{S}_{2}(\widehat{B},\theta)$&\multicolumn{15}{c|}{$\Bigl\langle\xi\bigl(\tfrac{X_{0}}{X_{2}}\bigr),\tfrac{X_{1}}{X_{2}}\xi\bigl(\tfrac{X_{1}}{X_{2}}\bigr)-\tfrac{X_{3}}{X_{2}}\xi\bigl(\tfrac{X_{3}}{X_{2}}\bigr),\xi\bigl(\tfrac{X_{1}}{X_{2}}\bigr)\xi\bigl(\tfrac{X_{3}}{X_{2}}\bigr)-\Bigl(\xi\bigl(\tfrac{X_{1}}{X_{2}}\bigr)+\xi\bigl(\tfrac{X_{3}}{X_{2}}\bigr)\Bigr)^{2}\Bigr\rangle$}\\[0.5em]
		\hline\rule{0pt}{1.5em}
		$\mathfrak{S}_{3}(\widehat{B},\theta)$&\multicolumn{15}{c|}{$\Bigl\langle\xi\bigl(\tfrac{X_{0}}{X_{3}}\bigr),2\tfrac{X_{1}}{X_{3}}\xi\bigl(\tfrac{X_{1}}{X_{3}}\bigr)+\tfrac{X_{2}}{X_{3}}\xi\bigl(\tfrac{X_{2}}{X_{3}}\bigr),\xi\bigl(\tfrac{X_{1}}{X_{3}}\bigr)\Bigl(\xi\bigl(\tfrac{X_{1}}{X_{3}}\bigr)+\xi\bigl(\tfrac{X_{2}}{X_{3}}\bigr)\Bigr)+\xi\bigl(\tfrac{X_{2}}{X_{2}}\bigr)^{2}\Bigr\rangle$}\\[0.5em]
		\hline\rule{0pt}{1.5em}
		$\mathfrak{S}_{1}^{\infty}(\widehat{B},\theta)$&\multicolumn{15}{c|}{$\Bigl\langle\tfrac{X_{0}}{X_{1}},\tfrac{X_{2}}{X_{1}}\xi\bigl(\tfrac{X_{2}}{X_{1}}\bigr)+2\tfrac{X_{3}}{X_{1}}\xi\bigl(\tfrac{X_{3}}{X_{1}}\bigr),\Bigl(\xi\bigl(\tfrac{X_{2}}{X_{1}}\bigr)+\xi\bigl(\tfrac{X_{3}}{X_{1}}\bigr)\Bigr)\xi\bigl(\tfrac{X_{3}}{X_{1}}\bigr)+\xi\bigl(\tfrac{X_{2}}{X_{1}}\bigr)^{2}\Bigr\rangle$}\\[0.5em]
		\hline\rule{0pt}{1.5em}
		$\mathfrak{S}_{2}^{\infty}(\widehat{B},\theta)$&\multicolumn{15}{c|}{$\Bigl\langle\tfrac{X_{0}}{X_{2}},\tfrac{X_{1}}{X_{2}}\xi\bigl(\tfrac{X_{1}}{X_{2}}\bigr)-\tfrac{X_{3}}{X_{2}}\xi\bigl(\tfrac{X_{3}}{X_{2}}\bigr),\xi\bigl(\tfrac{X_{1}}{X_{2}}\bigr)\xi\bigl(\tfrac{X_{3}}{X_{2}}\bigr)-\Bigl(\xi\bigl(\tfrac{X_{1}}{X_{2}}\bigr)+\xi\bigl(\tfrac{X_{3}}{X_{2}}\bigr)\Bigr)^{2}\Bigr\rangle$}\\[0.5em]
		\hline\rule{0pt}{1.5em}
		$\mathfrak{S}_{3}^{\infty}(\widehat{B},\theta)$&\multicolumn{15}{c|}{$\Bigl\langle\tfrac{X_{0}}{X_{3}},2\tfrac{X_{1}}{X_{3}}\xi\bigl(\tfrac{X_{1}}{X_{3}}\bigr)+\tfrac{X_{2}}{X_{3}}\xi\bigl(\tfrac{X_{2}}{X_{3}}\bigr),\xi\bigl(\tfrac{X_{1}}{X_{3}}\bigr)\Bigl(\xi\bigl(\tfrac{X_{1}}{X_{3}}\bigr)+\xi\bigl(\tfrac{X_{2}}{X_{3}}\bigr)\Bigr)+\xi\bigl(\tfrac{X_{2}}{X_{2}}\bigr)^{2}\Bigr\rangle$}\\[0.5em]
		\hline
	\end{tabular}
	\end{center}
\end{exm}

\begin{rem}
	\Cref{example-p-reducible} considers a case where though the non-confluent matrix is neither square nor $p$-nondegenerate, there exists a matrix which is non-confluent as well as $p$-nondegenerate, such that the characteristic cycles of the corresponding $\ell$-adic GKZ hypergeometric sheaves are identical and given by \Cref{topology-comparison-hard}.
\end{rem}

\begin{exm}\label{example-p-degenerate}
	Let $B=\begin{bmatrix}0&1&p\end{bmatrix}$, then $\widehat{B}$ is non-square and non-confluent but not $p$-nondegenerate. Then $S_{0}(\widehat{B},\{1,3\})$ is the closed subset of $T^{*}\afs^{3}$ defined by the ideal generated by $\{\xi(x_{2}),x_{1}\xi(x_{1})+x_{3}\xi(x_{3})\}$, which implies that
	\begin{equation*}
		\dim S(\widehat{B},\{1,3\})\geq\dim S_{0}(\widehat{B},\{1,3\})=4>3.
	\end{equation*}
	Let $\chi=(\chi_{0},\chi_{1}):\mu_{q-1}^{2}\rightarrow\overline{\mathbb{Q}}_{\ell}^{*}$ be a multiplicative character with $\chi_{0}$ being non-trivial, then in this case $CC\hyp_{\psi}(\widehat{B},\chi)$ may not be able to be given by \Cref{topology-comparison-hard}.
\end{exm}

\clearpage
\phantomsection
\addcontentsline{toc}{section}{References}
\bibliographystyle{alpha}
\bibliography{reference.bib}

\end{document}